%%%%%%%%%%%%%%%%%%%%%%%%%%%%%%%%%%%%%%%%%%%%%%%%%%%%%%%%%%%%%%%%%%%%%%%%%%%%
%% Trim Size: 9.75in x 6.5in
%% Text Area: 8in (include Runningheads) x 5in
%% ws-sd.tex   :   13-6-2008
%% Tex file to use with ws-sd.cls written in Latex2E.
%% The content, structure, format and layout of this style file is the
%% property of World Scientific Publishing Co. Pte. Ltd.
%% Copyright 1995, 2002 by World Scientific Publishing Co.
%% All rights are reserved.
%%%%%%%%%%%%%%%%%%%%%%%%%%%%%%%%%%%%%%%%%%%%%%%%%%%%%%%%%%%%%%%%%%%%%%%%%%%%
%

\documentclass[10pt,a4paper]{article}
\usepackage{amsmath}
\usepackage{amssymb}
\usepackage{amssymb}
\usepackage{cases}
\usepackage{bbm}
\usepackage{mathrsfs}
\usepackage{graphicx}
\usepackage{amsfonts}
\usepackage{enumerate}
\usepackage{theorem}
\usepackage[pdfstartview=FitH]{hyperref}
\makeatletter
\def\tank#1{\protected@xdef\@thanks{\@thanks
 \protect\footnotetext[0]{#1}}}
\def\bigfoot{

 \@footnotetext}
\makeatother

\topmargin=-10mm \oddsidemargin=-1mm \evensidemargin=-1mm
\textwidth=165mm \textheight=24.5cm

\newcommand{\ea}{\end{array}}

\allowdisplaybreaks

\newtheorem{theorem}{Theorem}[section]
\newtheorem{lem}{Lemma}[section]
\newtheorem{prp}[theorem]{Proposition}
\newtheorem{thm}[theorem]{Theorem}
\newtheorem{cor}[theorem]{Corollary}

\newtheorem{remark}{Remark}

\def\beq{\begin{equation}}
\def\nneq{\end{equation}}

\def\bthm{\begin{thm}}
\def\nthm{\end{thm}}

\def\blem{\begin{lem}}
\def\nlem{\end{lem}}
\def\bprf{\begin{proof}}

\def\bprop{\begin{prop}}
\def\nprop{\end{prop}}
\def\brmk{\begin{rem}}
\def\nrmk{\end{rem}}

\def\bexa{\begin{exa}}
\def\nexa{\end{exa}}
\def\bcor{\begin{cor}}
\def\ncor{\end{cor}}

\def\e{\varepsilon}

{\theorembodyfont{\rmfamily}
}

\title{Moderate deviations for stochastic models of two-dimensional second grade fluids driven by L\'evy noise
}

\footnotesize{\author{Wuting Zheng$^{1,}$\thanks{zwtzjr@mail.ustc.edu.cn},\ \ Jianliang Zhai $^{1,}$\thanks{zhaijl@ustc.edu.cn},\ \
 Tusheng Zhang$^{2,}$\thanks{Tusheng.Zhang@manchester.ac.uk}\\
 {\em $^1$ School of Mathematical Sciences,}\\
 {\em University of Science and Technology of China,}\\
 {\em Hefei, 230026, China}\\
 {\em $^2$ School of Mathematics, University of Manchester,}\\
 {\em Oxford Road, Manchester, M13 9PL, UK}\\
}
\date{}
\newenvironment{proof}{\par\noindent{\bf Proof:}}{\hspace*{\fill}$\blacksquare$\par}
\begin{document}
\maketitle

\noindent \textbf{Abstract:}
 In this paper, we establish a moderate deviation principle for stochastic models of two-dimensional second grade fluids driven by L\'evy noise. We will adopt the weak convergence approach. Because of the appearance of jumps, this result is significantly different from that in Gaussian case.
\vspace{4mm}

\noindent \textbf{Key Words:}
Moderate deviations;
Second grade fluids;
L\'evy process;
Weak convergence method.
\numberwithin{equation}{section}
\section{Introduction}

The second grade fluids is an admissible
model of slow flow fluids,  which contains industrial fluids, slurries, polymer melts, etc.. It has attracted much attention
from a theoretical point of view, since it has properties of boundedness, stability and exponential decay, and has interesting connections with many other fluid models, see e.g. \cite{2003-Busuioc-p1119-1119}, \cite{1974-Dunn-p191-252}, \cite{1979-Fosdick-p145-152}, \cite{2001-Shkoller-p539-543} and references therein.
\vskip 0.2cm

Recently, taking into account the effect of random environment, the external force is considered as random. The stochastic models of two-dimensional second grade fluids have been studied. For the case of
Gaussian noises, we refer to \cite{CC,RS-10-01,RS-10,RS-12,WZZ,ZZ,ZZZ1}, where the authors obtained the existence and uniqueness of solutions,
 the behavior of the solutions as $\alpha\rightarrow0$,  Freidlin-Wentzell's large deviation principles (LDP), exponential mixing and moderate deviation principles (MDP) for the solutions. In the case
 of L\'evy noises, the global existence of a martingale solution was obtained in \cite{HRS}, the existence and uniqueness of strong probabilistic solutions is established in \cite{SZZ}, and the Freidlin-Wentzell's large deviation principles for the solutions is proved in \cite{ZZZ2}.

In this paper, we are concerned with asymptotic behaviors of stochastic models for the incompressible non-Newtonian fluids of second grade driven by L\'evy noise, which are given as follows:

\begin{align}\label{1.a}
\left\{
\begin{aligned}
& d(u^\e(t)-\alpha \Delta u^\e(t))+ \Big(-\kappa \Delta u^\e(t)+{\rm curl}(u^\e(t)-\alpha \Delta u^\e(t))\times u^\e(t)+\nabla\mathfrak{P}\Big)\,dt \\
& = F(u^\e(t),t)\,dt +\e\int_{\mathbb{Z}}G(u^\e(t-),z)\widetilde{N}^{\e^{-1}}(dzdt),\quad \rm{ in }\ \mathcal{O}\times(0,T], \\
&\begin{aligned}
& {\rm{div}}\,u^\e=0 \quad &&\rm{in}\ \mathcal{O}\times(0,T]; \\
& u^\e=0  &&\rm{in}\ \partial \mathcal{O}\times[0,T]; \\
& u^\e(0)=u_0  &&\rm{in}\ \mathcal{O},
&\end{aligned}
\end{aligned}
\right.
\end{align}

\noindent where $\mathcal{O}$ is a bounded open domain of $\mathbb{R}^2$; $u^\e=(u^\e_1,u^\e_2)$ and $\mathfrak{P}$ represent the random velocity and the modified pressure respectively. $\mathbb{Z}$ is
 a locally compact Polish space. On a specified complete filtered probability space $(\Omega,\mathcal{F},\{\mathcal{F}_t\}_{t\in[0,T]},P)$,
$\widetilde{N}^{\epsilon^{-1}}$ is a compensated Poisson random measure on $[0,T]\times\mathbb{Z}$ with a
$\sigma$-finite
mean measure $\epsilon^{-1}\lambda_T\otimes \nu$, where $\lambda_T$ is the Lebesgue measure on $[0,T]$ and $\nu$ is a $\sigma$-finite measure on $\mathbb{Z}$. The details of $(\Omega,\mathcal{F},\{\mathcal{F}_t\}_{t\in[0,T]},P, \widetilde{N}^{\epsilon^{-1}})$ will be given in Section 2.

\indent Let $\Pi$ be the Helmholtz-Leray projection from $\mathbb{L}^2(\mathcal{O})$ into $\mathbb{H}$. Let $A$ be the Stokes operator $-\Pi\Delta$(see the precise definition below). One can see that (\ref{1.a}) is equivalent to the following stochastic evolution equation:
\begin{eqnarray}\label{Abstract}
du^{\e}(t)=-\kappa\widehat{A}u^{\e}(t)dt-\widehat{B}(u^{\e}(t),u^{\e}(t))dt+\widehat{F}(u^{\e}(t),t)dt
+\e\int_{\mathbb{Z}}\widehat{G}(u^{\e}(t-),z)\widetilde{N}^{\e^{-1}}(dzdt),
\end{eqnarray}
with initial value $u^{\e}(0)=u_0$,\\
where $\widehat{A}=(I+\alpha A)^{-1}A$, $\widehat{B}(u,v)=(I+\alpha A)^{-1}\Big(curl(u-\alpha\Delta u)\times v\Big)$, $\widehat{F}=(I+\alpha A)^{-1}F$, $\widehat{G}=(I+\alpha A)^{-1}G$.

\vskip 0.3cm
As the parameter $\varepsilon$ tends to zero, the solution $u^\e$ of (\ref{Abstract}) will tend to the solution of the following deterministic equation
\begin{eqnarray}\label{Deterministic}
du^0(t)=-\kappa\widehat{A}u^0(t)-\widehat{B}(u^0(t),u^0(t))dt+\widehat{F}(u^0(t),t)dt,
\end{eqnarray}
with initial value $u^0(0)=u_0$.

\vskip 0.2cm
In this paper, we shall investigate deviations of $u^{\e}$ from the deterministic solution $u^0$, as $\e$ decreases to $0$, that is, the asymptotic behavior of the trajectory,
$$Z^{\e}(t)=\frac{1}{a(\e)}(u^{\e}-u^0)(t),\ \ t\in[0,T],$$
where $a(\e)$ is some deviation scale which strongly influences the asymptotic behavior of $Z^\e$.
We will study the so-called moderate deviation principle (MDP for short), that is when the deviation scale satisfies
\begin{equation}\label{condition}
a(\e)\rightarrow0,\ \ \ \ {\e}/a^2(\e)\rightarrow 0 \ \ \ \ \text{as}\ \ \e\rightarrow 0.
\end{equation}
Throughout this paper, we assume that (\ref{condition}) is in place.\\
\indent Like the large deviations, the estimates of moderate deviations are very useful in the theory of statistical inference. It can provide us with the rate of convergence and a useful method for constructing asymptotic confidence intervals, see \cite{Ermakov,Gao,Inglot,Kallenberg} and references therein. There are many methods to establish the MDP in various framework, for example, De Acosta \cite{Acosta}, Chen\cite{Chen} and Ledoux \cite{Ledoux} for processes with independent increments; Wu \cite{Wu} for Markov processes; Guillin and Liptser \cite{Guillin} for diffusion processes; Wang and Zhang \cite{Wang2} for stochastic reaction-diffusion equations; Wang, Zhai and Zhang \cite{Wang1} for 2-D stochastic Navier-Stokes equations driven by Brownian motion; Zhai and Zhang \cite{ZZ} for stochastic models of 2-D second grade fluids driven by Brownian motion.\\
\indent The MDP for stochastic evolution equation and stochastic partial differential equations driven by L\'evy noise are quite different from that in the case driven by Brownian motion because of the difficulties caused by the jumps. In this paper, we will adopt the weak convergence approach introduced in \cite{BDG2} to establish the MDP for stochastic models of second grade fluids driven by L\'evy noise. Similar to \cite{DXZZ}, we decompose the solutions into a sum of the solutions of several relatively simpler equations and prove the convergence/tightness of the solutions of each equations. But the details of the proof are quite different and more difficult, because of the nature of the second grade fluids models. The main effort is to deal with the nonlinear term $\rm{curl}(u^\e(t)-\alpha \Delta u^\e(t))\times u^\e(t)$.\\
\indent We organize this paper as follows. In Section 2, we introduce some functional spaces and some notations. In Section 3, we formulate the hypotheses and state our main result. In Section 4, we provide all the proofs.

\section{Preliminaries and Notations}

In this paper, we assume that $\mathcal{O}$ is a simply connected and bounded open domain of $\mathbb{R}^2$ with boundary $\partial \mathcal{O}$ of class $\mathcal{C}^{3,1}$. For $p\geq 1$ and $k\in\mathbb{N}$, we denote by $L^p(\mathcal{O})$
and $W^{k,2}(\mathcal{O})$ the usual $L^p$ and Sobolev spaces over $\mathcal{O}$ respectively.
Let $W^{k,2}_0(\mathcal{O})$ be
the closure in $W^{k,2}(\mathcal{O})$ of $\mathcal{C}^\infty_c(\mathcal{O})$ the space of infinitely differentiable functions with compact supports in $\mathcal{O}$. For simplicity, we write $H^k(\mathcal{O}):=W^{k,2}(\mathcal{O})$ and $H_0^k(\mathcal{O}):=W^{k,2}_0(\mathcal{O})$. We equip $H^1_0(\mathcal{O})$
with the scalar product
\begin{align*}
((u,v))=\int_\mathcal{O}\nabla u\cdot\nabla vdx=\sum_{i=1}^2\int_\mathcal{O}\frac{\partial u}{\partial x_i}\frac{\partial v}{\partial x_i}dx,
\end{align*}

\noindent where $\nabla$ is the gradient operator. It is well known that the norm $\|\cdot\|$ generated by this scalar product is equivalent to the usual norm of $H^1(\mathcal{O})$.
%If the domain is smooth enough, then for any $m$ and $p$ such that $mp>n$ the embedding $W^{j+m,p}(\mathcal{O})\subset W^{j,q}(\mathcal{O})$ is compact for any $1\leq q\leq\infty$.
%More general embedding theorems can be found in \cite{2003-Adams-p-} and reference therein.

Throughout this paper, we set $\mathbb{Y}=Y\times Y$ for any Banach space $Y$.
Set
\begin{align*}\label{SP-01}
&\mathcal{C}=\Big\{u\in[\mathcal{C}^\infty_c(\mathcal{O})]^2\ {\rm such \ that\  div}\ u=0\Big\},\nonumber\\
&\mathbb{H}={\rm\ closure\ of}\ \mathcal{C}\ {\rm in}\ \mathbb{L}^2(\mathcal{O})(:=L^2(\mathcal{O},\mathbb{R}^2)),\nonumber\\
&\mathbb{V}={\rm\ the\ closure\ of}\ \mathcal{C} {\rm\ in}\ \mathbb{H}^1(\mathcal{O}).\nonumber
\end{align*}

We denote by $(\cdot,\cdot)$ and $|\cdot|$ the inner product in $\mathbb{L}^2(\mathcal{O})$( in $\mathbb{H}$) and the induced norm, respectively. The inner product and the norm of $\mathbb{H}^1_0(\mathcal{O})$
are denoted respectively by $((\cdot,\cdot))$ and $\|\cdot\|$. We endow the space $\mathbb{V}$ with the norm generated by the following inner product
\[
(u,v)_\mathbb{V}:=(u,v)+\alpha ((u,v)),\quad \text{for any } u,v\in\mathbb{V},
\]

\noindent and the norm in $\mathbb{V}$ is denoted by $\|\cdot\|_{\mathbb{V}}$. The $\rm Poincar\acute{e}$'s inequality implies that there exists a constant $\mathcal{P}>0$ such that the following inequalities holds
\begin{align}\label{Poincare}
(\mathcal{P}^2+\alpha)^{-1}\|v\|^2_\mathbb{V} \leq \|v\|^2
\leq\alpha^{-1}\|v\|^2_\mathbb{V},\quad \text{for any } v\in\mathbb{V}.
\end{align}
%where $\mathcal{P}$ is the constant from Poincar\'e's inequality.

\vskip 0.2cm
We also introduce the following space
\[
\mathbb{W}=\big\{u\in\mathbb{V}: {\rm curl}(u-\alpha\Delta u)\in L^2(\mathcal{O})\big\},
\]
and endow it with the norm generated by the scalar product
\begin{align}\label{W}
(u,v)_\mathbb{W}:=\big({\rm curl}(u-\alpha\Delta u),{\rm curl}(v-\alpha\Delta v)\big).
\end{align}
The norm in $\mathbb{W}$ is denoted by $\|\cdot\|_{\mathbb{W}}$. It has been proved that, see e.g. \cite{CG,CO}, the following (algebraic and topological) identity holds:
\begin{align*}
\mathbb{W}=\big\{v\in\mathbb{H}^3(\mathcal{O}): {\rm div}\,v=0\ {\rm and}\  v|_{\partial \mathcal{O}}=0\big\},
\end{align*}
moreover, there exists a constant $C> 0$ such that
\begin{align}\label{W-02}
    |v|_{\mathbb{H}^3(\mathcal{O})}
\leq
    C\|v\|_\mathbb{W},\ \ \ \forall v\in \mathbb{W}.
\end{align}
This result states that the norm $\|\cdot\|_\mathbb{W}$ is equivalent to the usual norm in $\mathbb{H}^3(\mathcal{O})$.

Identifying the Hilbert space $\mathbb{V}$ with its dual space $\mathbb{V}^*$ by the Riesz representation, we get a
Gelfand triple
\begin{align*}
\mathbb{W}\subset \mathbb{V}\subset\mathbb{W}^*.
\end{align*}

\noindent We denote by $\langle f,v\rangle$ the dual relation between $f\in\mathbb{W}^*$ and $v\in\mathbb{W}$ from now on. It is easy to see
\begin{eqnarray}\label{basis}
(v,w)_\mathbb{V}=\langle v,w\rangle,\ \ \ \forall\,v\in\mathbb{V},\ \ \forall\,w\in\mathbb{W}.
\end{eqnarray}

Note that the injection of $\mathbb{W}$ into $\mathbb{V}$ is compact,
thus there exists a sequence $\{e_i\}$ of elements of $\mathbb{W}$ which forms an orthonormal basis in $\mathbb{W}$, and an orthogonal system in $\mathbb{V}$, moreover this sequence verifies:
\begin{align}\label{Basis}
(v,e_i)_{\mathbb{W}}=\lambda_i(v,e_i)_{\mathbb{V}},\ \text{for any }v\in\mathbb{W},
\end{align}
where $0<\lambda_i\uparrow\infty$. From Lemma 4.1 in \cite{CG} we have
\begin{align}\label{regularity of basis}
e_i\in \mathbb{H}^4(\mathcal{O}),\ \ \forall\,i\in\mathbb{N}.
\end{align}

Consider the following ``generalized Stokes equations'':
\begin{align}\label{General Stokes}
\begin{aligned}
v-\alpha \Delta v &=f\quad {\rm in}\quad\mathcal{O},\\
{\rm div}\,v &=0\quad {\rm in}\quad\mathcal{O},\\
v &=0\quad {\rm on}\quad\partial \mathcal{O}.
\end{aligned}
\end{align}

\noindent The following result can be derived from \cite{Solonnikov} and also can be found in \cite{RS-10,RS-12}.
\begin{lem}\label{Lem GS}
%Let $\mathcal{O}$ be a connected, bounded open subset of $\mathbb{R}^2$ with a boundary $\partial \mathcal{O}$ of class $\mathcal{C}^l$
Set $l=1,2,3$. Let $f$ be a function in $\mathbb{H}^l$, then the system (\ref{General Stokes}) has a unique solution $v$. Moreover if $f$ is an element of $\mathbb{H}^l\cap\mathbb{V}$, then $v\in \mathbb{H}^{l+2}\cap\mathbb{V}$, and the following relations hold
\begin{gather}
(v,g)_\mathbb{V}=(f,g),\quad \forall\, g\in \mathbb{V},\label{Eq GS-01}\\
|v|_{\mathbb{H}^{l+2}}\leq C|f|_{\mathbb{H}^l}.\label{Eq GS-02}
\end{gather}
\end{lem}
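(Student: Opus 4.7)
The plan is to proceed in two main stages: first obtain a unique weak solution via Lax--Milgram, then upgrade to the claimed regularity using classical elliptic theory for the stationary Stokes system.

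For existence and uniqueness, I would set up the variational formulation: find $v\in\mathbb{V}$ such that $(v,g)_\mathbb{V} = (f,g)$ for all $g\in\mathbb{V}$. The bilinear form $(\cdot,\cdot)_\mathbb{V}$ is by definition coercive and continuous on $\mathbb{V}$, and since $f\in\mathbb{H}^l\subset\mathbb{L}^2(\mathcal{O})$, the functional $g\mapsto (f,g)$ is continuous on $\mathbb{V}$ (by Cauchy--Schwarz together with the embedding $\mathbb{V}\hookrightarrow\mathbb{L}^2$). The Lax--Milgram lemma then yields a unique $v\in\mathbb{V}$ satisfying the variational identity, which proves (\ref{Eq GS-01}). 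Uniqueness at the PDE level follows by subtracting two candidate solutions and testing against the difference in the variational identity.

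To connect this weak solution with the strong system, I would test $(v,g)_\mathbb{V} = (f,g)$ against $g\in\mathcal{C}$, which after integration by parts gives $\langle v - \alpha\Delta v - f, g\rangle = 0$ in the sense of distributions on every divergence-free test field. By de Rham's theorem there exists a pressure $\pi$ such that $v - \alpha\Delta v + \nabla\pi = f$ in $\mathcal{D}'(\mathcal{O})$. Combined with $\mathrm{div}\,v = 0$ and the boundary condition $v|_{\partial\mathcal{O}}=0$ encoded in $v\in\mathbb{V}$, the field $v$ fits the framework of the classical stationary Stokes system with right-hand side $f$.

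The regularity estimate $|v|_{\mathbb{H}^{l+2}}\leq C|f|_{\mathbb{H}^l}$ for $l=1,2,3$ then comes from Solonnikov's elliptic regularity theory for the Stokes operator, which requires the boundary to be of class $\mathcal{C}^{l+1,1}$; since $\partial\mathcal{O}\in\mathcal{C}^{3,1}$ by assumption, this covers all three cases. The main obstacle in this plan is precisely this last step: the argument cannot be carried out by elementary energy methods because $\mathbb{H}^{l+2}$ regularity at the boundary requires careful local straightening and interior--boundary decomposition à la Agmon--Douglis--Nirenberg adapted to the Stokes system. I would simply invoke Solonnikov's result as the authors do, since reproving it would take us far afield from the moderate deviations program that is the actual subject of the paper.
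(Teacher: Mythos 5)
Your proposal is essentially the paper's approach: the authors give no argument beyond citing Solonnikov \cite{Solonnikov} and \cite{RS-10,RS-12}, and your Lax--Milgram/de Rham scaffolding plus the final appeal to Solonnikov's Stokes regularity is exactly the standard reconstruction of that citation. The only blemish is your boundary-regularity bookkeeping: if the requirement were $\mathcal{C}^{l+1,1}$ as you state, then $l=3$ would need $\mathcal{C}^{4,1}$, which the assumed $\mathcal{C}^{3,1}$ does not give, so for $l=3$ you should simply quote the estimate in the form established under the $\mathcal{C}^{3,1}$ hypothesis in \cite{Solonnikov,CG,RS-10,RS-12} rather than derive it from that (inconsistent) rule of thumb.
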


We recall the following estimates which can be found in \cite{RS-12}.
\begin{lem}\label{Lem B}
For any $u,v,w\in\mathbb{W}$, we have
\begin{align}\label{Ineq B 01}
    |({\rm curl}(u-\alpha\Delta u)\times v,w)|
\leq
    C\|u\|_{\mathbb{W}}\|v\|_\mathbb{V}\|w\|_{\mathbb{W}},
\end{align}

\noindent and
\begin{align}\label{Ineq B 02}
    |({\rm curl}(u-\alpha\Delta u)\times u,w)|
\leq
    C\|u\|^2_\mathbb{V}\|w\|_{\mathbb{W}}.
\end{align}
\end{lem}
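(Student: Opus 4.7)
The plan is to deduce (\ref{Ineq B 01}) from a direct H\"older estimate with 2D Sobolev embeddings, and to deduce (\ref{Ineq B 02}) via integration by parts that exposes a cancellation from the divergence-free condition on $w$.

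For (\ref{Ineq B 01}), I would use the pointwise inequality $|{\rm curl}(u-\alpha\Delta u)\times v|\le|{\rm curl}(u-\alpha\Delta u)|\,|v|$ (valid in 2D, where ${\rm curl}$ of a vector field is a scalar) together with H\"older's inequality to get
\[
|({\rm curl}(u-\alpha\Delta u)\times v,w)|\le|{\rm curl}(u-\alpha\Delta u)|_{L^2}\,|v|_{\mathbb{L}^4}\,|w|_{\mathbb{L}^4}.
\]
The first factor equals $\|u\|_\mathbb{W}$ by (\ref{W}). For the other two I would invoke the 2D Sobolev embedding $\mathbb{H}^1\hookrightarrow\mathbb{L}^4$ together with (\ref{Poincare}) to get $|v|_{\mathbb{L}^4}\le C\|v\|_\mathbb{V}$, and then absorb $\|w\|_\mathbb{V}\le C\|w\|_\mathbb{W}$ coming from the continuous embedding $\mathbb{W}\subset\mathbb{V}$.

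For (\ref{Ineq B 02}) the same H\"older estimate is too weak since it produces $\|u\|_\mathbb{W}$, which we must replace by $\|u\|_\mathbb{V}$. I would introduce the scalar $\phi:=u\wedge w=u_1w_2-u_2w_1$ and the vorticity $\omega:={\rm curl}\,u$. Since $u,w\in\mathbb{V}$ both vanish on $\partial\mathcal{O}$, one checks that $\phi=0$ and $\nabla\phi=0$ on $\partial\mathcal{O}$, so every subsequent integration by parts produces no boundary contribution. Writing ${\rm curl}(u-\alpha\Delta u)=\omega-\alpha\Delta\omega$ and integrating by parts twice to move the Laplacian from $\omega$ onto $\phi$,
\[
({\rm curl}(u-\alpha\Delta u)\times u,w)=\int_\mathcal{O}\omega\,\phi\,dx-\alpha\int_\mathcal{O}\omega\,\Delta\phi\,dx,
\]
and expanding $\Delta\phi=(\Delta u\wedge w)+2(\nabla u_1\cdot\nabla w_2-\nabla u_2\cdot\nabla w_1)+(u\wedge\Delta w)$ decomposes the $\alpha$-integral into three sub-terms.

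The key step, and the main obstacle, is the cancellation of the a priori dangerous sub-term involving $\Delta u$. Using the 2D identities $\Delta u_1=-\partial_2\omega$ and $\Delta u_2=\partial_1\omega$ (consequences of $\omega=\partial_1u_2-\partial_2u_1$ together with ${\rm div}\,u=0$) and one further integration by parts,
\[
\int_\mathcal{O}\omega(\Delta u\wedge w)\,dx=-\int_\mathcal{O}\omega\,\partial_2\omega\,w_2\,dx-\int_\mathcal{O}\omega\,\partial_1\omega\,w_1\,dx=\tfrac12\int_\mathcal{O}\omega^2\,{\rm div}\,w\,dx=0,
\]
because $w$ is divergence-free (the arising boundary terms vanish thanks to $w|_{\partial\mathcal{O}}=0$). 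Without this cancellation the estimate would force the $\mathbb{H}^2$-norm of $u$ to appear; recognizing it is the crux of the argument. The two remaining sub-terms are controlled routinely by H\"older together with the 2D embeddings $\mathbb{W}\hookrightarrow\mathbb{H}^3\hookrightarrow W^{1,\infty}\cap W^{2,4}$ and $\mathbb{V}\hookrightarrow\mathbb{L}^4$: they produce $|\omega|_{L^2}|\nabla u|_{L^2}|\nabla w|_{L^\infty}$ and $|\omega|_{L^2}|u|_{\mathbb{L}^4}|\Delta w|_{L^4}$ respectively, each bounded by $C\|u\|_\mathbb{V}^2\|w\|_\mathbb{W}$; and the initial integral satisfies $|\int\omega\,\phi\,dx|\le|\omega|_{L^2}|u|_{\mathbb{L}^4}|w|_{\mathbb{L}^4}\le C\|u\|_\mathbb{V}^2\|w\|_\mathbb{W}$, which closes the proof.
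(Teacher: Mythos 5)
Your argument is correct, and it is worth noting that the paper itself offers no proof of Lemma \ref{Lem B}: it simply recalls the estimates from the reference \cite{RS-12}. Your treatment of (\ref{Ineq B 01}) by H\"older in $L^2\times L^4\times L^4$ plus the 2D embedding $\mathbb{H}^1\hookrightarrow\mathbb{L}^4$ is exactly what is needed (it even yields the slightly stronger bound with $\|w\|_\mathbb{V}$ in place of $\|w\|_\mathbb{W}$). For (\ref{Ineq B 02}) your route differs in presentation from the one in the cited literature (Razafimandimby--Sango, going back to Cioranescu--Girault), where one uses the vector identity ${\rm curl}\,\Phi\times v=(v\cdot\nabla)\Phi-(\nabla\Phi)^{T}v$ to rewrite the pairing through the trilinear form $b$ and then integrates by parts to keep only first derivatives of $u$; you instead work with the scalar $\phi=u\wedge w$, commute ${\rm curl}$ with $\Delta$, move the Laplacian onto $\phi$ (the boundary terms do vanish, since $u=w=0$ on $\partial\mathcal{O}$ forces $\phi=0$ and $\nabla\phi=0$ there), and kill the dangerous term through $\Delta u_1=-\partial_2\omega$, $\Delta u_2=\partial_1\omega$ and ${\rm div}\,w=0$, i.e. $\int\omega\,\nabla\omega\cdot w\,dx=-\tfrac12\int\omega^2\,{\rm div}\,w\,dx=0$. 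This is an equivalent cancellation — in the identity-based proof it appears as the vanishing of the corresponding $b$-term — but your version is more elementary and self-contained, at the modest price of being tied to the 2D structure ($\Delta u=\nabla^{\perp}{\rm curl}\,u$ for divergence-free $u$). The remaining estimates, using $\mathbb{W}\hookrightarrow\mathbb{H}^3\hookrightarrow W^{1,\infty}\cap W^{2,4}$ and $\mathbb{V}\hookrightarrow\mathbb{L}^4$, are routine and correctly carried out, and all the regularity needed to justify the integrations by parts is available since $u,w\in\mathbb{H}^3(\mathcal{O})$.
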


Defining the bilinear operator $\widehat{B}(\cdot,\cdot):\ \mathbb{W}\times\mathbb{V}\longrightarrow\mathbb{W}^*$ by
\begin{align*}
\widehat{B}(u,v):=(I+\alpha A)^{-1}\mathbb{P}\big({\rm curl}(u-\alpha \Delta u)\times v\big).
\end{align*}
%The short notation $\widehat{B}(u):=\widehat{B}(u,u)$ is often used.
We have the following consequence of Lemma \ref{Lem B}.

\begin{lem}\label{Lem-B-01}

For any $u\in\mathbb{W}$ and $v\in\mathbb{V}$, it holds that
\begin{align}\label{Eq B-01}
    \|\widehat{B}(u,v)\|_{\mathbb{W}^*}
\leq
    C\|u\|_\mathbb{W}\|v\|_\mathbb{V},
\end{align}

\noindent and
\begin{align}\label{Eq B-02}
\|\widehat{B}(u,u)\|_{\mathbb{W}^*}
\leq
    C\|u\|^2_\mathbb{V}.
\end{align}

\noindent In addition
\begin{align}\label{Eq B-03}
 \langle\widehat{B}(u,v),v\rangle=0, \quad\forall\,u, v\in\mathbb{W},
\end{align}

\noindent which implies
\begin{align}\label{Eq B-04}
 \langle\widehat{B}(u,v),w\rangle=-\langle\widehat{B}(u,w),v\rangle,\quad\forall\,u, v, w\in\mathbb{W}.
\end{align}

\end{lem}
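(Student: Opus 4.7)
The plan is to reduce all four claims to Lemma \ref{Lem B} by identifying the dual pairing $\langle \widehat{B}(u,v), w\rangle$ with the integral $({\rm curl}(u-\alpha\Delta u)\times v, w)$ for test functions $w\in\mathbb{W}$. The key mechanism is the bridge provided by (\ref{basis}) together with the characterization of $(I+\alpha A)^{-1}$ in (\ref{Eq GS-01}).

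First I would establish the representation
\begin{equation*}
\langle \widehat{B}(u,v), w\rangle = ({\rm curl}(u-\alpha\Delta u)\times v,\, w), \qquad u\in\mathbb{W},\ v\in\mathbb{V},\ w\in\mathbb{W}.
\end{equation*}
By (\ref{basis}) the left-hand side equals $(\widehat{B}(u,v), w)_{\mathbb{V}}$; setting $f=\Pi({\rm curl}(u-\alpha\Delta u)\times v)$, so that $\widehat{B}(u,v) = (I+\alpha A)^{-1}f$, (\ref{Eq GS-01}) applied with $g=w\in\mathbb{V}$ gives $(\widehat{B}(u,v), w)_{\mathbb{V}} = (f, w) = ({\rm curl}(u-\alpha\Delta u)\times v, w)$, where the last step uses that $w$ is divergence-free so the Helmholtz-Leray projection $\Pi$ can be discarded against it.

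The bounds (\ref{Eq B-01}) and (\ref{Eq B-02}) then follow immediately: apply (\ref{Ineq B 01}) and (\ref{Ineq B 02}) in the representation above, and take supremum over $w\in\mathbb{W}$ with $\|w\|_{\mathbb{W}}\le 1$. For (\ref{Eq B-03}), take $w=v\in\mathbb{W}$ in the representation; since $(a\times b)\cdot b=0$ pointwise, the integrand $({\rm curl}(u-\alpha\Delta u)\times v)\cdot v$ vanishes a.e.\ in $\mathcal{O}$, so the pairing is zero. Finally, (\ref{Eq B-04}) follows from the bilinearity of $\widehat{B}$ in its second argument by expanding $0=\langle\widehat{B}(u,v+w),v+w\rangle$ and using (\ref{Eq B-03}) to kill the diagonal terms.

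Conceptually, the only step requiring any care is the representation formula itself, since it must weave together three ingredients (the identification $\mathbb{V}\simeq\mathbb{V}^*$ that underlies (\ref{basis}), the variational characterization of $(I+\alpha A)^{-1}$ from (\ref{Eq GS-01}), and the self-adjointness of $\Pi$ on divergence-free test functions). Once this identity is in place, the four conclusions are mechanical consequences of Lemma \ref{Lem B} and elementary vector identities, so I do not anticipate any genuine obstacle beyond bookkeeping.
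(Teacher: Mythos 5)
Your proof is correct and follows exactly the route the paper intends: the paper states this lemma without proof, merely as a ``consequence of Lemma \ref{Lem B}'', and your duality representation $\langle\widehat{B}(u,v),w\rangle=({\rm curl}(u-\alpha\Delta u)\times v,\,w)$ combined with the estimates (\ref{Ineq B 01})--(\ref{Ineq B 02}), the pointwise identity $(a\times b)\cdot b=0$, and polarization is precisely that reduction. The only quibble is that invoking (\ref{Eq GS-01}) literally requires ${\rm curl}(u-\alpha\Delta u)\times v$ to lie in $\mathbb{H}^l\cap\mathbb{V}$, which it need not for general $u\in\mathbb{W}$, $v\in\mathbb{V}$; but your identity is exactly how the pairing $\widehat{B}(u,v)\in\mathbb{W}^*$ is interpreted (and can be justified by density), so nothing essential is at stake.
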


We are now introducing $(\Omega,\mathcal{F},\mathbb{F}:=\{\mathcal{F}_t\}_{t\in[0,T]},P, \widetilde{N}^{\epsilon^{-1}})$.
\vskip 0.2cm

\indent For a locally compact Polish space $S$, let $M_{FC}(S)$ denote the space of all Borel measures $\vartheta$ on $S$ such that  $\vartheta(K)<\infty$
for each compact set $K\subseteq S$. Endow $M_{FC}(S)$ with the weakest topology, denoted it by $\mathcal{T}(M_{FC}(S))$, such that for each $f\in C_c(S)$ the mapping
$\vartheta\in M_{FC}(S)\rightarrow \int_Sf(s)\vartheta(ds)$ is continuous. This topology is metrizable such that $M_{FC}(S)$ is a Polish space, see \cite{BDM} for more details.
%Fix $T\in(0,\infty)$ and let $\mathBB{Z}_{T}=[0,T]\times \mathBB{Z}$. Fix a measure $\nu\in M_{FC}(S)$, and let $\nu_{T}=\lambda_T\otimes\nu$, where $\lambda_T$ is Lebesgue measure on $[0,T]$.
\vskip 0.2cm

\indent Recall that $\mathbb{Z}$ is a locally compact Polish space, and in this paper, we assume that $\nu$ is a given element of $M_{FC}(\mathbb{Z})$.  We specify the underlying probability space $(\Omega, \mathcal{F}, {\mathbb{F}}:=\{\mathcal{F}_t\}_{t\in [0,T]},P)$ in the following way:
\begin{align*}
  \Omega:=M_{FC}\big([0,T]\times \mathbb{Z}
  \times [0,\infty)\big),
  \qquad \mathcal{F}:=\mathcal{T}(M_{FC}\big([0,T]\times \mathbb{Z}\times [0,\infty)).
\end{align*}
\indent We introduce the function
\begin{align*}
& N\colon \Omega \rightarrow M_{FC}\big([0,T]\times \mathbb{Z}\times [0,\infty)\big),\qquad  N(\omega)=\omega.
\end{align*}
%Define
%$$\bar{\mathbb{W}}=C\big([0,T],R\big)\times M_{FC}\big([0,T]\times \mathBB{Z}\times [0,\infty)\big).$$
\indent Define for each $t\in [0,T]$ the $\sigma$-algebra
\begin{align*}
\mathcal{G}_{t}:=\sigma\left(\left\{N((0,s]\times A)\big):\,
0\leq s\leq t,\,A\in \mathcal{B}\big(\mathbb{Z}\times [0,\infty)\big)\right\}\right).
\end{align*}
\indent Let $\lambda_T$ and $\lambda_\infty$ be Lebesgue measure on $[0,T]$ and $[0,\infty)$ respectively. It follows from \cite[Sec.I.8]{Ikeda-Watanabe} that there exists a unique probability measure $P$
 on $(\Omega,\mathcal{F})$ such that: $N$ is a Poisson random measure on $\Omega$ with intensity measure $\lambda_T\otimes\nu\otimes \lambda_\infty$.\\
\indent We denote by $\mathbb{F}:=\{{\mathcal{F}}_{t}\}_{t\in[0,T]}$ the $P$-completion of $\{\mathcal{G}_{t}\}_{t\in[0,T]}$ and by $\mathcal P$ the $\mathbb{F}$-predictable $\sigma$-field on $[0,T]\times \Omega$. Define
\begin{align*}
{\mathcal{A}}
:=\left\{\varphi\colon [0,T]\times {\mathbb{Z}}\times\Omega\to [0,\infty):
\, (\mathcal{P}\otimes\mathcal{B}({\mathbb{Z}}))\setminus\mathcal{B}[0,\infty)\text{-measurable}\right\}.
\end{align*}

For $\varphi\in{\mathcal{A}}$, define a
counting process $N^{\varphi}$ on $[0,T]\times {{\mathbb{Z}}}$ by
   \begin{align*}%\label{Jump-representation}
      N^\varphi((0,t]\times A)=\int_{(0,t]\times A\times (0,\infty)}1_{[0,\varphi(s,z)]}(r)\, N(ds, dz, dr),
   \end{align*}
for $t\in[0,T]$ and $A\in\mathcal{B}({\mathbb{Z}})$. When $\varphi(s,z,\omega)=\epsilon^{-1}$, we write $N^\varphi=N^{\epsilon^{-1}}$.
It is easy to see that $N^{\epsilon^{-1}}$ is a Poisson random measure on $[0,T]\times\mathbb{Z}$ with a mean measure $\epsilon^{-1}\lambda_T\otimes\nu$. We denote $\widetilde{N}^{\epsilon^{-1}}$ the compensated Poisson random measure respect to $N^{\epsilon^{-1}}$.

\vskip 0.2cm

We end this section with a criteria of compactness, which will be used later. Let $\mathbb{K}$ be a separable Hilbert space. Given $p>1$, $\beta\in(0,1)$, let $W^{\beta,p}([0,T],\mathbb{K})$ be the
 space of all $u\in L^p([0,T],\mathbb{K})$ such that
$$
\int_0^T\int_0^T\frac{\|u(t)-u(s)\|^p_\mathbb{K}}{|t-s|^{1+\beta p}}dtds<\infty,
$$
endowed with the norm
$$
\|u\|^p_{W^{\beta,p}([0,T],\mathbb{K})}:=\int_0^T\|u(t)\|^p_{\mathbb{K}}dt+\int_0^T\int_0^T\frac{\|u(t)-u(s)\|^p_\mathbb{K}}{|t-s|^{1+\beta p}}dtds.
$$
The following result is a variant of the criteria for compactness proved in \cite{Lions} (Sect. 5, Ch. I)
 and \cite{Temam 1983} (Sect. 13.3).
\begin{lem}\label{Compact}{\rm
Let $\mathbb{K}_0\subset \mathbb{K}\subset \mathbb{K}_1$ be Banach spaces, $\mathbb{K}_0$ and $\mathbb{K}_1$ reflexive, with compact embedding of $\mathbb{K}_0$ into $\mathbb{K}$.
For $p\in(1,\infty)$ and $\beta\in(0,1)$, let $\Lambda$ be the space
$$
\Lambda=L^p([0,T];\mathbb{K}_0)\cap W^{\beta,p}([0,T];\mathbb{K}_1)
$$
endowed with the natural norm. Then the embedding of $\Lambda$ into $L^p([0,T];\mathbb{K})$ is compact.
}\end{lem}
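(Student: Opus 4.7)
The plan is to reduce the claim to the classical Simon-Aubin-Lions compactness criterion. Let $\{u_n\}\subset\Lambda$ be any sequence bounded in the natural norm; the goal is to extract a subsequence converging strongly in $L^p([0,T];\mathbb{K})$. Two ingredients are required: a uniform $L^p([0,T];\mathbb{K}_0)$-bound, which is immediate from boundedness in $\Lambda$, and uniform vanishing of the $\mathbb{K}_1$-valued time-translations $\tau_h u_n-u_n$ in $L^p$ as $h\downarrow 0$.

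The main substantive step is to extract the translation estimate from the Gagliardo seminorm. For $0<h<T$, inserting an intermediate point $s\in(t,t+h)$ and using
$$\|u(t+h)-u(t)\|_{\mathbb{K}_1}^{p}\le 2^{p-1}\bigl(\|u(t+h)-u(s)\|_{\mathbb{K}_1}^{p}+\|u(s)-u(t)\|_{\mathbb{K}_1}^{p}\bigr),$$
together with the bounds $|t+h-s|,|s-t|\le h$, then integrating first in $s$ over $(t,t+h)$ and afterwards in $t$ over $(0,T-h)$, and finally recognising the right-hand side as bounded by the full Gagliardo double integral, yields
$$\int_0^{T-h}\|u(t+h)-u(t)\|_{\mathbb{K}_1}^{p}\,dt\le C\,h^{\beta p}\,[u]^{p}_{W^{\beta,p}([0,T];\mathbb{K}_1)}.$$
Applied to the bounded sequence $\{u_n\}$, this gives the required uniform equicontinuity in $L^p([0,T-h];\mathbb{K}_1)$.

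With the $L^p([0,T];\mathbb{K}_0)$-bound and the $\mathbb{K}_1$-equicontinuity in hand, I would close the argument by invoking the Fr\'echet-Kolmogorov-Riesz theorem in $L^p([0,T];\mathbb{K})$. The upgrade from $\mathbb{K}_1$ to $\mathbb{K}$ is handled by the Ehrling-Lions interpolation inequality
$$\|v\|_{\mathbb{K}}\le\eta\|v\|_{\mathbb{K}_0}+C_\eta\|v\|_{\mathbb{K}_1},\qquad v\in\mathbb{K}_0,\ \eta>0,$$
which follows from the compact embedding $\mathbb{K}_0\hookrightarrow\mathbb{K}$ combined with the continuous embedding $\mathbb{K}\hookrightarrow\mathbb{K}_1$ by a standard contradiction argument using reflexivity. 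I expect the translation estimate to be the only nontrivial step; once it is in place, the remainder is a direct application of the cited classical compactness theorems and reduces to checking that the strong $L^p([0,T];\mathbb{K}_1)$-limit of any weakly $L^p([0,T];\mathbb{K}_0)$-convergent subsequence of $\{u_n\}$ lies in $L^p([0,T];\mathbb{K}_0)$ and that convergence in fact takes place in $L^p([0,T];\mathbb{K})$.
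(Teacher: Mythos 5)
Your plan is correct in outline, and it is worth noting that the paper itself offers no proof of this lemma: it is stated as a known variant of the compactness criteria in Lions (Ch.~I, Sect.~5) and Temam (Sect.~13.3), so there is no in-paper argument to match against. Your route is the standard one for general $p$ (as in Simon's compactness theorem and in Flandoli--Gat\c{a}rek): the translation estimate
$\int_0^{T-h}\|u(t+h)-u(t)\|_{\mathbb{K}_1}^{p}\,dt\le C h^{\beta p}[u]_{W^{\beta,p}([0,T];\mathbb{K}_1)}^{p}$
obtained by averaging over the intermediate point $s\in(t,t+h)$ is right (just make the division by $h$ in the $s$-average explicit, since that is where $h^{1+\beta p}$ becomes $h^{\beta p}$), and combining it with the Ehrling--Lions inequality and the uniform $L^p(\mathbb{K}_0)$ bound does yield equicontinuity of translations in $L^p([0,T];\mathbb{K})$. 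This differs from the cited classical treatments, which (for $p=2$ and Hilbert spaces) argue via the Fourier-transform characterization of the fractional Sobolev norm; your translation argument is the more elementary and more general route and is the natural proof of the statement as formulated.

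Two points should be made explicit to close the argument. First, the Fr\'echet--Kolmogorov--Riesz theorem in its scalar form (boundedness plus equicontinuity of translations) does \emph{not} suffice when the target $\mathbb{K}$ is infinite dimensional; you need the vector-valued version (Simon's criterion), whose extra hypothesis is that the averages $\int_{t_1}^{t_2}u_n(t)\,dt$ form a relatively compact subset of $\mathbb{K}$ for each $0<t_1<t_2<T$. This is exactly where your $L^p([0,T];\mathbb{K}_0)$ bound and the compact embedding $\mathbb{K}_0\hookrightarrow\mathbb{K}$ must be invoked (via H\"older, the averages are bounded in $\mathbb{K}_0$), so state this rather than leaving it at ``boundedness plus equicontinuity.'' Second, the Ehrling inequality needs only compactness of $\mathbb{K}_0\hookrightarrow\mathbb{K}$ and continuity plus injectivity of $\mathbb{K}\hookrightarrow\mathbb{K}_1$; reflexivity plays no role there, and the final ``checking that the weak limit lies in $L^p(\mathbb{K}_0)$'' step is not needed once Simon's criterion is applied directly to the bounded sequence. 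With these clarifications the proof is complete.
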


\section{Hypothesis and Main Result}

In this section, we will state the precise assumptions on the coefficients and our main result.\\
\vskip 0.02cm
%We endow the complete probability space $(\Omega,\mathcal{F},P)$ with a filtration $\mathcal{F}_t$, $t\in[0,T]$.
Let $F:\mathbb{V}\times[0,T]\rightarrow\mathbb{V}$
and $G:\mathbb{V}\times\mathbb{Z}\rightarrow\mathbb{V}$ be given measurable maps. We introduce the following conditions:

\vskip 0.2cm
{\bf (F1)}
\begin{eqnarray}\label{F-01}
            F(0,t)=0,
\end{eqnarray}
and
\begin{eqnarray}\label{F-02}
\|F(u_1,t)-F(u_2,t)\|_\mathbb{V}
\leq
C_1\|u_1-u_2\|_\mathbb{V},\ \ \forall u_1,u_2\in\mathbb{V},\ \ t\in[0,T].
\end{eqnarray}

{\bf (F2)} $F$ is differentiable with respect to the first variable, and the derivative $F':\mathbb{V}\times[0,T] \rightarrow L(\mathbb{V})$ ( $L(\mathbb{V})$ is the space of all bounded linear operators from $\mathbb{V}$ to $\mathbb{V}$) is uniformly Lipschitz with respect to the first variable, more precisely,
\begin{eqnarray}\label{F-03}
\|F'(u_1,t)-F'(u_2,t)\|_{L(\mathbb{V})}
\leq
C\|u_1-u_2\|_\mathbb{V},\ \ \forall u_1,u_2\in\mathbb{V},\ \ t\in[0,T].
\end{eqnarray}
By (\ref{F-02}), we conclude that
\begin{eqnarray}\label{F-04}
   \|F'(u,t)\|_{L(\mathbb{V})}\leq C.
\end{eqnarray}
Denote $\widehat{F}'(u,t)=(I+\alpha A)^{-1} F'(u,t)$.

{\bf (G)}
There exist $L_G, M_G\in L^2(\nu)\cap\mathcal{H}$ such that
\begin{eqnarray}\label{G-01}
\|G(x_1,z)-G(x_2,z)\|_\mathbb{V}
\leq
L_G(z)\|x_1-x_2\|_\mathbb{V},\ \ \forall x_1,x_2\in\mathbb{V},\ \ z\in\mathbb{Z},
\end{eqnarray}
and
\begin{eqnarray}\label{G-02}
\|G(x,z)\|_\mathbb{V}
\leq
M_G(z)\big(1+\|x\|_\mathbb{V}\big),\ \ \forall x\in\mathbb{V},\ \ z\in\mathbb{Z},
\end{eqnarray}
where $\mathcal{H}=\Big\{h:\mathbb{Z}\rightarrow\mathbb{R}:\exists\delta>0,\text{ s.t. }\forall\Gamma \text{ with }\nu(\Gamma)<\infty,\int_{\Gamma}\exp{(\delta h^2(z))}\nu(dz)<\infty\Big\}$.
%We now define  three operators $\widehat{F}$ , $widehat{F}'$ and $\widehat{G}$ which map $\mathbb{V}\times[0,T]$ into $\mathbb{W}$ $L(\mathbb{V},\mathbb{W})$ and $\mathbb{W}^{\otimes m}$, respectively, by

Recall the solution $u^0$ given in (\ref{Deterministic}). By Theorem 5.6 in \cite{CG}, we have the following result.
\begin{lem}\label{Regularity}
If we assume that the boundary $\partial \mathcal{O}$ is of class $\mathcal{C}^{3,1}$ and the initial value $u_0\in{\mathbb{W}\cap{\mathbb{H}}^4(\mathcal{O})}$, then $u^0$ belongs to $L^\infty([0,T],{\mathbb{W}\cap{\mathbb{H}}^4(\mathcal{O})})$, i.e.
\begin{eqnarray}\label{Regularity estimate}
\sup_{t\in[0,T]}\|u^0(t)\|_{{\mathbb{H}}^4(\mathcal{O})}\leq C.
\end{eqnarray}
\end{lem}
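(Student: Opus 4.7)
My plan is to prove the result via a Galerkin approximation combined with successive a priori estimates that climb from $\mathbb{V}$ to $\mathbb{W}$ and finally to $\mathbb{H}^4$. Uniqueness of a solution in these classes follows from the bilinearity of $\widehat{B}$ and (\ref{Eq B-01}); it therefore suffices to construct one solution satisfying the uniform bound (\ref{Regularity estimate}), after which compactness (via Lemma \ref{Compact}) identifies the limit with $u^0$.

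Step one: set $u^0_n(t) = \sum_{i=1}^n c^n_i(t) e_i$ using the orthonormal basis of $\mathbb{W}$ from (\ref{Basis}), and derive the finite-dimensional ODE for the coefficients through the pairing (\ref{basis}). Testing against $u^0_n$ in the $\mathbb{V}$ inner product, the antisymmetry (\ref{Eq B-03}) kills the nonlinear term; the dissipation together with (\ref{F-01})--(\ref{F-02}) and Gronwall then yield $\sup_t \|u^0_n(t)\|_\mathbb{V} \leq C$ uniformly in $n$.

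Step two: upgrade to $\mathbb{W}$. Let $q_n = \mathrm{curl}(u^0_n - \alpha \Delta u^0_n)$; applying $\mathrm{curl}$ to the equation converts the nonlinearity into a transport-type term of the form $(u^0_n \cdot \nabla) q_n$, which is $L^2$-orthogonal to $q_n$ since $\mathrm{div}\,u^0_n = 0$. Using Lemma \ref{Lem B} to control the remaining cross terms by $\|u^0_n\|_\mathbb{V}$ (already bounded) and the Lipschitz estimate (\ref{F-02}) for the forcing, Gronwall gives $\sup_t \|q_n(t)\|_{L^2} \leq C$, hence $\sup_t \|u^0_n(t)\|_\mathbb{W} \leq C$, which by (\ref{W-02}) is a uniform $\mathbb{H}^3$ bound.

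Step three: to reach $\mathbb{H}^4$, differentiate the vorticity equation once more in space and test by $\nabla q_n$. The transport structure still helps since $((u^0_n \cdot \nabla) \nabla q_n, \nabla q_n) = 0$, but the commutator $[\nabla, u^0_n \cdot \nabla] q_n$ must be estimated; in two dimensions this is feasible because $u^0_n$ is already bounded in $\mathbb{H}^3 \hookrightarrow W^{1,\infty}$, and a Ladyzhenskaya-type interpolation closes the estimate. This yields $\sup_t \|q_n(t)\|_{H^1} \leq C$, so that $u^0_n - \alpha \Delta u^0_n$ is bounded in $\mathbb{H}^2 \cap \mathbb{V}$; elliptic regularity for the generalized Stokes system (Lemma \ref{Lem GS} with $l = 2$) then lifts this to a uniform bound of $u^0_n$ in $\mathbb{H}^4$. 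The hypothesis $u_0 \in \mathbb{W} \cap \mathbb{H}^4(\mathcal{O})$ is essential to initialize this final estimate, and controlling the top-order commutator without losing a derivative is the main obstacle of the whole argument.
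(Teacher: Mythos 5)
A point of comparison first: the paper contains no proof of this lemma at all --- it is imported verbatim from Theorem 5.6 of \cite{CG}, so what you have written is in effect a reconstruction of that reference's argument (Galerkin scheme with the special basis (\ref{Basis}), a $\mathbb{V}$ estimate, a $\mathbb{W}$ estimate through $z=\mathrm{curl}(u-\alpha\Delta u)$, and the generalized Stokes regularity of Lemma \ref{Lem GS} to convert control of $z$ in $H^1$ into control of $u$ in $\mathbb{H}^4(\mathcal{O})$). Your Steps one and two are sound and are exactly the standard estimates (at the Galerkin level Step two is really the trick of multiplying the $i$-th equation by $\lambda_i$ and using (\ref{Basis}), which the paper itself uses elsewhere).

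Step three, however, which is the actual content of the lemma, has two genuine gaps. (i) You cannot ``differentiate the vorticity equation once more and test by $\nabla q_n$'' inside the Galerkin scheme: the projected equations hold only against the finite-dimensional span of the $e_i$ in the $\mathbb{V}$ (equivalently $\mathbb{W}$) inner product, $\nabla q_n$ is not an admissible test function, and since $e_i\in\mathbb{H}^4(\mathcal{O})$ one only has $q_n\in H^1(\mathcal{O})$, so $(u^0_n\cdot\nabla)\nabla q_n$ and the commutator are not even defined as $L^2$ functions at the approximate level. The estimate has to be run on the limiting solution, using the transport equation $\partial_t z+u^0\cdot\nabla z+(\kappa/\alpha)z=(\kappa/\alpha)\,\mathrm{curl}\,u^0+\mathrm{curl}\,F(u^0,t)$ together with a regularization or characteristics argument, where your observation that $u^0\in L^\infty([0,T],\mathbb{H}^3(\mathcal{O}))\hookrightarrow W^{1,\infty}$ (via (\ref{W-02})) makes the commutator bound close; this is fixable but is precisely the technical heart that your sketch skips. (ii) More seriously under the paper's stated hypotheses, propagating $H^1$ regularity of $z$ requires one spatial derivative of the source, i.e. $\mathrm{curl}\,F(u^0(t),t)\in H^1$, whereas {\bf (F1)}--{\bf (F2)} only give $F(u^0(t),t)\in\mathbb{V}$, hence $\mathrm{curl}\,F\in L^2$; integrating by parts does not help because $\Delta z$ is not available. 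The cited Theorem 5.6 of \cite{CG} assumes a force with the matching regularity, and the paper silently inherits that assumption through the citation; your Gronwall argument at the $H^1(z)$ level does not close without it, and your sketch never confronts this term (the invocation of a Ladyzhenskaya interpolation is also unnecessary once the $W^{1,\infty}$ bound is in hand --- the real obstruction is the forcing term, not the commutator).
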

%consider the condition in theorem, we should have $F(u,t)\in\mathbb{V}$ and $curl(F(u,t))\in\mathbb{V}$, since the theorem \ref{Solution Existence}, we can have $u^0(t)\in\mathbb{W}$, obviously, $F(u,t)\in\mathbb{V}$. Since the Condition (F1) , by simple calculus and $u^0(t)\in\mathbb{W}$, we can prove $curl(F(u,t))\in\mathbb{V}$.

\vskip 0.02cm
To obtain the moderate deviation principle, additionally we impose the following hypothesis throughout the paper:

\vskip 0.02cm
{\bf (I)} the initial value $u_0\in{\mathbb{W}\cap{\mathbb{H}}^4(\mathcal{O})}$.
%and the boundary $\partial \mathcal{O}$ is of class $\mathcal{C}^{3,1}$.

\vskip 0.2cm
\indent In order to introduce our main result, we need the following notations. The space $D([0,T],\mathbb{V})$ is the collection of all $\mathbb{V}$-valued c$\grave{a}$dl$\grave{a}$g functions equipped with the Skorokhod topology. For any $\e>0$ and $M<\infty$, consider the spaces
\begin{eqnarray*}
\mathcal{S}_{+,\e}^M=\big\{\varphi:[0,T]\times\mathbb{Z}\rightarrow\mathbb{R}_+|L_T(\varphi)\leq Ma^2(\e)\big\},\\
\mathcal{S}_{\e}^M=\big\{\psi:[0,T]\times\mathbb{Z}\rightarrow\mathbb{R}|\psi=(\varphi-1)/a(\e),\varphi\in\mathcal{S}_{+,\e}^M\big\},
\end{eqnarray*}
where $L_T(g)=\int_0^T\int_{\mathbb{Z}}\big(g(t,z)\log{g(t,z)}-g(t,z)+1\big)\nu(dz)dt$.\\
\indent The norm in the Hilbert space $L^2(\nu_T)$ will be denoted by $\|\cdot\|_2$ and $B_2(R)$ denotes the ball of radius $R$ in $L^2(\nu_T)$. Throughout this paper $B_2(R)$ is equipped with the weak topology of $L^2(\nu_T)$ and it is therefore weakly compact.\\

By Theorem 3.2 in Shang, Zhai and Zhang \cite{SZZ}, we know that the equation (\ref{Abstract}) has a unique strong solution $u^{\e}\in D([0,T],\mathbb{V})\cap L^{\infty}(0,T;\mathbb{W})$ in the probabilistic sense.
Set $Y^\e={\big(u^\e-u^0\big)}/{a(\e)}$, which satisfies
\begin{eqnarray}\label{MDP-eq}
dY^{\e}(t)
&=&
-\kappa\widehat{A}Y^{\e}(t)dt-
%\frac{1}{a(\e)}\Big(\widehat{B}(u^{\e}(t),u^{\e}(t))-\widehat{B}(u^0(t),u^0(t))\Big)dt
\Big(\widehat{B}(a(\e)Y^\e(t)+u^0(t),Y^{\e}(t))+\widehat{B}(Y^\e(t),u^0(t))\Big)dt\nonumber\\
& &+\frac{1}{a(\e)}\Big(\widehat{F}(a(\e)Y^\e(t)+u^0(t),t)-\widehat{F}(u^0(t),t)\Big)dt\nonumber\\
& &+\frac{\e}{a(\e)}\int_{\mathbb{Z}}\widehat{G}(a(\e)Y^\e(t-)+u^0(t-),z)\widetilde{N}^{\e^{-1}}(dzdt),
\end{eqnarray}
with initial value $Y^{\e}(0)=0$.\\
\indent The following theorem is our main result.
\begin{thm}\label{MDP Theorem}
Suppose that \textbf{Conditions (F1), (F2), (G) and (I)} hold. Then $\{Y^\e\}$ satisfies a large deviation principle in $D([0,T],\mathbb{V})$ with speed $\e/a^2(\e)$ and the rate function given by
$$I(\eta)=\inf_{\psi}[\frac{1}{2}\|\psi\|_2^2],$$
where the infimum is taken over all $\psi\in L^2(\nu_T)$ such that $(\eta,\psi)$ satisfies the following equation:
\begin{eqnarray}\label{Theorem-1-eq-1}
d\eta(t)
&=&
-\kappa\widehat{A}\eta(t)dt-
%\frac{1}{a(\e)}\Big(\widehat{B}(u^{\e}(t),u^{\e}(t))-\widehat{B}(u^0(t),u^0(t))\Big)dt
\Big(\widehat{B}(\eta(t),u^0(t))+\widehat{B}(u^0(t),\eta(t))\Big)dt\nonumber\\
& &+\widehat{F}'(u^{0}(t),t)\eta(t)dt+\int_{\mathbb{Z}}\widehat{G}(u^{0}(t),z)\psi(z,t)\nu(dz)dt,
\end{eqnarray}
with initial value $\eta(0)=0$.
That is,
\begin{itemize}
         \item[$(a)$](Upper bound) For each closed subset $O_1$ of $D([0,T],\mathbb{V})$,
              $$
                \limsup_{\e\rightarrow 0}\frac{1}{\e/a^2(\e)}\log P(Y^\e\in O_1)\leq- \inf_{x\in O_1}I(x).
              $$
         \item[$(b)$](Lower bound) For each open subset $O_2$ of $D([0,T],\mathbb{V})$,
              $$
                \liminf_{\e\rightarrow 0}\frac{1}{\e/a^2(\e)}\log P(Y^\e\in O_2)\geq- \inf_{x\in O_2}I(x).
              $$
       \end{itemize}
\end{thm}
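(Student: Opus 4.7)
The plan is to apply the weak convergence framework for moderate deviations driven by Poisson random measures introduced in Budhiraja--Dupuis--Ganguly~\cite{BDG2}. The key step is to reduce the MDP for $\{Y^\e\}$ to verifying two conditions. \textbf{(i)} For every $\psi \in L^2(\nu_T)$, the skeleton equation \eqref{Theorem-1-eq-1} admits a unique solution $\eta^\psi \in D([0,T],\mathbb{V})\cap L^2([0,T],\mathbb{W})$, and the map $\psi \mapsto \eta^\psi$ is continuous from $B_2(M)$ (with its weak $L^2(\nu_T)$ topology) into $D([0,T],\mathbb{V})$. \textbf{(ii)} For every $M<\infty$ and every family $\{\varphi^\e\}\subset\mathcal{S}^{M}_{+,\e}$, setting $\psi^\e=(\varphi^\e-1)/a(\e)\in\mathcal{S}^M_\e$, if $\psi^\e$ converges in distribution to some $\psi$ in $B_2(M)$, then the controlled process $\widetilde Y^\e$ obtained from \eqref{MDP-eq} by replacing $\widetilde{N}^{\e^{-1}}$ with $\widetilde{N}^{\e^{-1}\varphi^\e}$ (after accounting for the compensator shift) converges in distribution to $\eta^\psi$ in $D([0,T],\mathbb{V})$.

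For \textbf{(i)}, since Lemma~\ref{Regularity} ensures that $u^0$ is bounded in $\mathbb{W}\cap\mathbb{H}^4(\mathcal{O})$ and \eqref{Theorem-1-eq-1} is linear in $\eta$, standard energy estimates using the cancellation \eqref{Eq B-03}, the bilinear bound \eqref{Eq B-01}, the bound \eqref{F-04}, the growth \eqref{G-02}, and Gronwall yield uniqueness together with the a priori estimate $\sup_{t\le T}\|\eta^\psi\|_\mathbb{V}^2+\int_0^T\|\eta^\psi\|_\mathbb{W}^2\,dt \le C(M)$. Weak continuity in $\psi$ follows from the linearity of the forcing $\int_\mathbb{Z}\widehat{G}(u^0,z)\psi(z,\cdot)\,\nu(dz)$ together with the compact embedding $L^2([0,T];\mathbb{W})\cap W^{\beta,2}([0,T];\mathbb{W}^*) \hookrightarrow L^2([0,T];\mathbb{V})$ provided by Lemma~\ref{Compact}. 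To prove \textbf{(ii)}, following the decomposition strategy of~\cite{DXZZ}, I would write $\widetilde Y^\e = \eta^{\psi^\e} + R^\e$ and split $R^\e$ into pieces accounting for: the stochastic noise contribution $(\e/a(\e))\int_\mathbb{Z}\widehat{G}(a(\e)\widetilde Y^\e+u^0,z)\,\widetilde N^{\e^{-1}\varphi^\e}(dz\,dt)$, which vanishes by \eqref{condition}; the quadratic nonlinearity $a(\e)\widehat{B}(\widetilde Y^\e,\widetilde Y^\e)$, formally of order $a(\e)$ via \eqref{Eq B-02}; the $\widehat{F}$-Taylor remainder of order $a(\e)$ by \eqref{F-03}; and the control drift correction $\int_\mathbb{Z}[\widehat{G}(a(\e)\widetilde Y^\e+u^0,z)-\widehat{G}(u^0,z)]\psi^\e(z,t)\,\nu(dz)\,dt$, controlled by \eqref{G-01} together with a Cauchy--Schwarz bound against $\|\psi^\e\|_2^2\le M$. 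The essential ingredient is the uniform a priori estimate
\[
\mathbb{E}\sup_{t\le T}\|\widetilde Y^\e(t)\|_\mathbb{V}^{2p}+\mathbb{E}\int_0^T\|\widetilde Y^\e(t)\|_\mathbb{W}^2\,dt\le C(M,p),
\]
obtained by applying It\^o's formula to $\|\widetilde Y^\e\|_\mathbb{V}^2$, using \eqref{Eq B-03}, Burkholder--Davis--Gundy, and the exponential integrability $M_G\in\mathcal{H}$ to absorb the tilted jump intensity $\e^{-1}\varphi^\e$.

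The main obstacle is the second-grade nonlinearity: the bilinear term $\widehat{B}$ demands $\mathbb{W}$-regularity via Lemma~\ref{Lem-B-01} rather than merely the $\mathbb{V}$-coercivity of $\widehat{A}$, so integrated $\mathbb{W}$-control of $\widetilde Y^\e$ must be extracted directly from the smoothing property of $(I+\alpha A)^{-1}$ present in the abstract formulation \eqref{Abstract}. Combined with \eqref{Eq B-02} and \eqref{condition}, this in turn suffices to show that the quadratic remainder $a(\e)\widehat{B}(\widetilde Y^\e,\widetilde Y^\e)\to 0$ in $\mathbb{W}^*$ in probability. Tightness of $\{\widetilde Y^\e\}$ in $D([0,T],\mathbb{V})$ is then established via Lemma~\ref{Compact}, and the limit is identified as $\eta^\psi$ using \textbf{(i)}. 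The variational representation of Laplace functionals for Poisson random measures then yields the Laplace principle, equivalent to the LDP for $Y^\e$ with speed $\e/a^2(\e)$ and the rate function $I$ stated in Theorem~\ref{MDP Theorem}.
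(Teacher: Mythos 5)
Your overall strategy is the one the paper follows: reduce the MDP to the Budhiraja--Dupuis--Ganguly criteria, verify continuity of the skeleton map $\mathcal{G}^0$ along weakly convergent controls, and prove convergence in distribution of the controlled process by decomposing it into simpler pieces in the spirit of \cite{DXZZ}, with tightness via Lemma \ref{Compact}. However, two of your key technical claims would not go through as stated. First, you assert that the integrated $\mathbb{W}$-control of $\widetilde Y^\e$ ``must be extracted directly from the smoothing property of $(I+\alpha A)^{-1}$'' after applying It\^o's formula to $\|\widetilde Y^\e\|_{\mathbb{V}}^2$. There is no such smoothing mechanism for the second grade fluid: the $\mathbb{V}$-level energy identity only produces the dissipation $2\kappa\int_0^t\|\cdot\|^2\,ds$, i.e.\ $H^1$-type control, and no amount of the bounded operator $(I+\alpha A)^{-1}$ upgrades this to $\mathbb{W}=\mathbb{H}^3$ control. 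The paper instead obtains the uniform $\mathbb{W}$-bounds (Lemmas \ref{MDP-2-prp-1}, \ref{MDP-2-prp-2}) by performing the energy estimate directly in the $\mathbb{W}$ inner product, via Galerkin approximation in the special basis satisfying (\ref{Basis}), the cancellation $\big(\widehat{B}(u,v),u\big)_{\mathbb{W}}=0$ for $u,v\in\mathbb{W}\cap\mathbb{H}^4(\mathcal{O})$, the curl inequality, and crucially the extra regularity $u_0\in\mathbb{W}\cap\mathbb{H}^4(\mathcal{O})$ and $u^0\in L^\infty([0,T],\mathbb{H}^4(\mathcal{O}))$ from Condition (I) and Lemma \ref{Regularity}. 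Without this your handling of the nonlinearity $\widehat{B}$ (which needs $\mathbb{W}$-regularity through Lemma \ref{Lem-B-01}) has no foundation, and your claimed estimate $\mathbb{E}\int_0^T\|\widetilde Y^\e\|_{\mathbb{W}}^2\,dt\le C$ is not derivable from the $\mathbb{V}$-It\^o computation you describe.

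Second, you treat the controls as if $\|\psi^\e\|_2\le M$ and apply Cauchy--Schwarz against $\|\psi^\e\|_2^2\le M$ for the drift correction. But $\psi^\e=(\varphi^\e-1)/a(\e)$ with $\varphi^\e\in\mathcal{U}^M_{+,\e}$ is only known to satisfy $L_T(\varphi^\e)\le Ma^2(\e)$; this yields an $L^2(\nu_T)$ bound only for the truncated part $\psi^\e 1_{\{|\psi^\e|\le\beta/a(\e)\}}$ (which lies in $B_2(\sqrt{M\kappa_2(1)})$), while the large-value region is controlled only through the $L^1$-type bounds of Lemmas \ref{H-lemma-1}--\ref{H-lemma-3}, which in turn require $M_G,L_G\in\mathcal{H}$ (exponential integrability). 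This is precisely why the paper's proof of Proposition \ref{MDP-2} splits off a separate auxiliary process $L^\e$ driven by $\psi^\e 1_{\{|\psi^\e|>\beta/a(\e)\}}$ and shows it vanishes, alongside $Z^\e$ (the stochastic integral part) and $U^\e$ (the Lipschitz difference of $\widehat G$ against the truncated control), before passing to a Skorokhod representation and pathwise Gronwall estimates; relatedly, your decomposition $\widetilde Y^\e=\eta^{\psi^\e}+R^\e$ uses the skeleton at a random control that need not lie in a fixed $L^2$-ball, which causes the same difficulty. Your plan also assumes a uniform moment bound $\mathbb{E}\sup_t\|\widetilde Y^\e\|_{\mathbb{V}}^{2p}\le C(M,p)$ that the paper never needs (and does not prove); the paper instead works almost surely on the Skorokhod space with random constants $C(\omega^1)$. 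Incorporating the indicator splitting and the direct $\mathbb{W}$-level estimates would bring your argument in line with the paper's proof.
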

\begin{remark}
Following the similar arguments as in the proof of  Theorem 5.6 in \cite{CG}, one can see that for all $\psi\in L^2(\nu_T)$, the equation (\ref{Theorem-1-eq-1}) has a unique solution $\eta\in L^\infty([0,T],{\mathbb{W}\cap{\mathbb{H}}^4(\mathcal{O})})$.
\end{remark}
\begin{proof}
Define $\mathcal{G}^0:L^2(\nu_T)\rightarrow C([0,T],\mathbb{V})$ by
\begin{eqnarray}
\mathcal{G}^0(\psi)=\eta \text{ for } \psi\in L^2(\nu_T), \text{ where } (\eta,\psi) \text{ solves } (\ref{Theorem-1-eq-1}).
\end{eqnarray}
\indent  The existence and uniqueness of the strong solution of (\ref{MDP-eq}) implies that there exists a measurable mapping $\mathcal{G}^{\e}:M_{FC}(\mathbb{Z}\times[0,T])\rightarrow D([0,T],\mathbb{V})$ such that: $\mathcal{G}^{\e}(\e N^{\e^{-1}})=Y^{\e}$.\\

\indent We will apply the general criteria (Theorem 2.3) obtained in \cite{BDG} to prove the theorem. According to \cite{BDG}, it is sufficient to verify two claims. The first one is the following:\\
\indent \textbf{(MDP-1)} For any $M>0$, suppose that $g^\e,g\in B_2(M)$ and $g^\e\rightarrow g$. Then
\begin{eqnarray*}
\mathcal{G}^0(g^\e)\rightarrow\mathcal{G}^0(g)\ \ \text{  in }C([0,T],\mathbb{V}).
\end{eqnarray*}
\indent In order to state the second claim, we need to introduce some additional notations.\\
\indent Let $(K_n)_{n\in\mathbb{N}}$ be a sequence of compact sets $K_n\subseteq \mathbb{Z}$
with $ K_n \nearrow \mathbb{Z}$.  For each $n\in\mathbb{N}$, let
\begin{align*}
     \bar{\mathcal{A}}_{b,n}
= \Big\{\psi\in \mathcal{A}: \ \
&\psi(t,z,\omega)\in[\tfrac{1}{n},n], \text{if }(t,z,\omega)\in [0,T]\times K_n\times{\Omega}\\
&\text{and }\psi(t,z,\omega)=1, \text{if }(t,z,\omega)\in [0,T]\times K_n^c\times{\Omega}
\Big\}
\end{align*}
and $\bar{\mathcal{A}}_{b}=\bigcup _{n=1}^\infty \bar{\mathcal{A}}_{b,n}$. Define
\begin{eqnarray*}
\mathcal{U}_{+,\e}^M=\big\{\varphi\in\bar{\mathcal{A}}_b:\varphi(\cdot,\cdot,\omega)\in\mathcal{S}_{+,\e}^M,{P}-a.s.\big\}\\
\mathcal{U}_{\e}^M=\big\{\psi\in\bar{\mathcal{A}}:\psi(\cdot,\cdot,\omega)\in\mathcal{S}_{\e}^M,P-a.s.\big\}
\end{eqnarray*}
\indent Suppose $\varphi\in\mathcal{S}_{+,\e}^M$. By Lemma 3.2 in \cite{BDG}, there exists $\kappa_2(1)\in(0,\infty)$ that is independent of $\e$ and such that $\psi1_{\{|\psi|\leq1/a(\e)\}}\in B_2(\sqrt{M\kappa_2(1)})$, where $\psi=(\varphi-1)/a(\e)$. In this paper, we use the symbol $"\Rightarrow"$ to denote convergence in distribution. Now we state the second claim:\\
\indent \textbf{(MDP-2)} For any $M>0$, let $\{\varphi^\e\}$ be such that for every $\e>0,\varphi^\e\in\mathcal{U}_{+,\e}^M$ and for some $\beta\in(0,1],\psi^\e1_{\{|\psi^\e|\leq\beta/a(\e)\}}\Rightarrow\psi$ in $B_2(\sqrt{M\kappa_2(1)})$ where $\psi^\e=(\varphi^\e-1)/a(\e)$. Then
\begin{eqnarray*}
\mathcal{G}^{\e}(\e N^{{\e}^{-1}\varphi^\e})\Rightarrow\mathcal{G}^0(\psi)\ \ \text{  in }D([0,T],\mathbb{V}).
\end{eqnarray*}
\indent The proofs of (MDP-1), (MDP-2) is lengthy and involved, we will give the details in the next section. \textbf{(MDP-1)} will be proved in Proposition \ref{MDP-1} and \textbf{(MDP-2)} will be established in Proposition \ref{MDP-2}.
\end{proof}

\section{The proofs of MDP-1 and MDP-2}

We need some more preparations before the proof. The following Lemmas \ref{H-lemma-1}-\ref{H-lemma-3} were proved in \cite{BDG}( see Lemma 4.2, Lemma 4.3 and Lemma 4.6 there).

\begin{lem}\label{H-lemma-1}
Let $h\in L^2(\nu)\cap\mathcal{H}$ and fix $M>0$. Then there exists a constant $\varsigma_h>0$ such that for any measurable subset $I\in[0,T]$ and for all $\e>0$,
\begin{eqnarray}\label{H-lemma-1-eq}
\sup_{\varphi\in{{\mathcal{S}}^M_{+,\e}}}\int_{\mathbb{Z}\times I}h^2(z)\varphi(z,s)\nu(dz)ds\leq\varsigma_h\big(a^2(\e)+\lambda_T(I)\big).
\end{eqnarray}
\end{lem}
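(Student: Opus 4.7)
The plan is to prove the bound via a classical Young-type inequality that couples the exponential integrability of $h$ with the entropy-type control on $\varphi$. For the convex function $\ell(b)=b\log b-b+1$ that defines $L_T$, a short Legendre computation gives the conjugate $\ell^*(a)=e^a-1$, and hence the pointwise inequality $ab\le\ell(b)+\ell^*(a)$ for all $a,b\ge 0$. After rescaling by an arbitrary $\sigma>0$, I would apply this with $a=\sigma h^2(z)$ and $b=\varphi(z,s)$ to obtain
$$h^2(z)\,\varphi(z,s)\;\le\;\tfrac{1}{\sigma}\bigl(\varphi\log\varphi-\varphi+1\bigr)+\tfrac{1}{\sigma}\bigl(e^{\sigma h^2(z)}-1\bigr).$$

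Integrating this estimate over $\mathbb{Z}\times I$, the first piece is dominated by $\sigma^{-1}L_T(\varphi)\le\sigma^{-1}Ma^2(\e)$, since its integrand is nonnegative and $\varphi\in\mathcal{S}^M_{+,\e}$. The second piece factors as $\sigma^{-1}\lambda_T(I)\int_{\mathbb{Z}}(e^{\sigma h^2}-1)\,\nu(dz)$, so the entire matter reduces to producing a single $\sigma>0$, independent of $\e$ and of $I$, for which the $\nu$-integral $C_h:=\int_{\mathbb{Z}}(e^{\sigma h^2}-1)\,\nu(dz)$ is finite. Once that finiteness is established, setting $\varsigma_h:=\sigma^{-1}\max\{M,C_h\}$ will yield the desired inequality.

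The main obstacle is precisely this $\nu$-integrability, because the hypothesis $h\in\mathcal{H}$ provides exponential integrability only on sets of finite $\nu$-measure while $\nu$ itself may be infinite on $\mathbb{Z}$. I would overcome this by splitting $\mathbb{Z}=\{|h|<1\}\cup\{|h|\ge 1\}$. On $\{|h|<1\}$, convexity of $x\mapsto e^x-1$ yields $e^x-1\le(e^\sigma-1)x/\sigma$ for $x\in[0,\sigma]$, so that $e^{\sigma h^2}-1\le(e^\sigma-1)h^2$, whose $\nu$-integral is finite because $h\in L^2(\nu)$. On $\{|h|\ge 1\}$, Chebyshev's inequality gives $\nu(|h|\ge 1)\le\|h\|_{L^2(\nu)}^2<\infty$, so this region has finite $\nu$-measure, and the definition of $\mathcal{H}$ supplies some $\delta>0$ with $\int_{\{|h|\ge 1\}}e^{\delta h^2}\,\nu(dz)<\infty$. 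Choosing $\sigma\in(0,\delta]$ then makes both contributions finite and produces $C_h<\infty$, completing the proof with a constant $\varsigma_h$ that depends only on $h$ and $M$ but not on $\e$ or on the measurable set $I\subseteq[0,T]$.
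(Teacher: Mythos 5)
Your proof is correct. The paper gives no argument of its own for this lemma—it refers to Budhiraja--Dupuis--Ganguly \cite{BDG}—and your derivation (Young's inequality for $\ell(b)=b\log b-b+1$ with conjugate $\ell^{*}(a)=e^{a}-1$, the entropy bound $L_T(\varphi)\le Ma^{2}(\varepsilon)$ on $\mathcal{S}^{M}_{+,\varepsilon}$, and the split of $\mathbb{Z}$ into $\{|h|<1\}$ and $\{|h|\ge 1\}$ to deduce $\int_{\mathbb{Z}}(e^{\sigma h^{2}}-1)\,\nu(dz)<\infty$ from $h\in L^{2}(\nu)\cap\mathcal{H}$) is essentially the standard proof given in that reference.
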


\begin{lem}\label{H-lemma-2}
Let $h\in L^2(\nu)\cap\mathcal{H}$ and $I$ be a measurable subset of $[0,T]$. Fix $M>0$. Then there exists $\Gamma_h$, $\rho_h:(0,\infty)\rightarrow(0,\infty)$ such that $\Gamma_h(u)\downarrow0$ as $u\uparrow\infty$ and for all $\e,\beta\in(0,\infty)$,
\begin{eqnarray}\label{H-lemma-2-eq-1}
\sup_{\psi\in{{\mathcal{S}}^M_{\e}}}\int_{\mathbb{Z}\times I}|h(z)\psi(z,s)|1_{\{|\psi|>\beta/a(\e)\}}\nu(dz)ds\leq\Gamma_h(\beta)\big(1+\sqrt{\lambda_T(I)}\big),
\end{eqnarray}
and
\begin{eqnarray}\label{H-lemma-2-eq-2}
\sup_{\psi\in{{\mathcal{S}}^M_{\e}}}\int_{\mathbb{Z}\times I}|h(z)\psi(z,s)|\nu(dz)ds\leq\rho_h(\beta)\sqrt{\lambda_T(I)}+\Gamma_h(\beta)a(\e).
\end{eqnarray}
\end{lem}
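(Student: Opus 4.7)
The proof rests on splitting each integral at the threshold $|\psi(s,z)|=\beta/a(\e)$, treating the bulk $\{|\psi|\le\beta/a(\e)\}$ and the tail $\{|\psi|>\beta/a(\e)\}$ by two very different devices. Throughout, $\ell(x):=x\log x-x+1$ is convex with convex dual $\ell^{*}(y)=e^{y}-1$, and the entropy constraint reads $\int\ell(\varphi)\nu(dz)ds=L_T(\varphi)\le Ma^{2}(\e)$ with $\varphi=1+a(\e)\psi$.

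\textbf{Bulk regime.} On $\{|\psi|\le\beta/a(\e)\}$ the variable $\varphi$ is confined to a compact interval around $1$, and a second-order Taylor expansion of $\ell$ at $x=1$ gives $\ell(\varphi)\ge c_{\beta}(\varphi-1)^{2}=c_{\beta}a^{2}(\e)\psi^{2}$ on this set. Hence
\begin{align*}
\int_{\mathbb{Z}\times I}\psi^{2}\,1_{\{|\psi|\le\beta/a(\e)\}}\,\nu(dz)ds\le c_{\beta}^{-1}a^{-2}(\e)L_T(\varphi)\le c_{\beta}^{-1}M.
\end{align*}
Cauchy--Schwarz against $\int_I\int_{\mathbb{Z}}h^{2}\nu(dz)ds=\lambda_T(I)\|h\|_{L^{2}(\nu)}^{2}$ then delivers a contribution bounded by $\rho_h(\beta)\sqrt{\lambda_T(I)}$ with $\rho_h(\beta)=\|h\|_{L^{2}(\nu)}\sqrt{c_{\beta}^{-1}M}$, which is the first term of (\ref{H-lemma-2-eq-2}) and is absent from (\ref{H-lemma-2-eq-1}) since that bound only concerns the tail.

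\textbf{Tail regime.} Apply the Young inequality $x|h|\le\sigma^{-1}\ell(x)+\sigma^{-1}(e^{\sigma|h|}-1)$ associated with the pair $(\ell,\ell^{*})$. On $\{\varphi>1+\beta\}$ we have $|\psi|=(\varphi-1)/a(\e)\le\varphi/a(\e)$, and the monotonicity of $\ell$ on $[1,\infty)$ lets us absorb the indicator by $1_{\{\varphi>1+\beta\}}\le\ell(\varphi)/\ell(1+\beta)$, pulling out the vanishing prefactor $\ell(1+\beta)^{-1}\to 0$. On the companion set $\{\varphi<1-\beta\}$ (non-empty only when $\beta<1$) one uses $|\psi|\le 1/a(\e)$ together with the uniform lower bound $\ell(\varphi)\ge\ell(1-\beta)$ to achieve the same absorption. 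Using $L_T(\varphi)\le Ma^{2}(\e)$ and the exponential integrability supplied by $h\in\mathcal H$ (so that $\int_{\mathbb{Z}}(e^{\sigma|h|}-1)\nu(dz)<\infty$ for sufficiently small $\sigma$), one arrives at a bound of the form $\Gamma_h(\beta)\bigl(1+\sqrt{\lambda_T(I)}\bigr)$, with the additive $1$ absorbing the short-interval $a(\e)\le 1$ contribution. This is (\ref{H-lemma-2-eq-1}); replacing the $1$ with the sharper $a(\e)$ estimate on the same piece (which follows from the $L_T(\varphi)\le Ma^{2}(\e)$ bound before Cauchy--Schwarz) and adding the bulk contribution from Step~1 yields (\ref{H-lemma-2-eq-2}).

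\textbf{Main obstacle.} The key subtlety is producing the decay $\Gamma_h(\beta)\downarrow 0$ uniformly in $\e$. A naive Cauchy--Schwarz gives only a multiple of $\|h\|_{L^{2}(\nu)}$ and offers no improvement in $\beta$. The decay must come from a genuinely two-sided use of the Young duality: absorbing $1_{\{\varphi>1+\beta\}}$ into the factor $\ell(\varphi)/\ell(1+\beta)$ (whose prefactor vanishes) while simultaneously keeping the dual exponential term $\int(e^{\sigma|h|}-1)\nu(dz)$ finite through the defining property of $\mathcal H$. Balancing these two constraints via a $\beta$-dependent choice of $\sigma$, so that the entropy cost on the primal side and the moment cost on the dual side are paid simultaneously, is the technically delicate step and the reason $\mathcal H$ (rather than plain $L^{2}(\nu)$) appears in the hypothesis.
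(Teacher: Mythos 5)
You should first note that the paper does not prove this lemma at all: it is quoted verbatim from \cite{BDG} (Lemmas 4.2--4.3 there), so your attempt can only be compared with the standard argument. Your overall architecture (split at $|\psi|=\beta/a(\e)$; quadratic lower bound $\ell(x)\geq (x-1)^2/(2(1+\beta))$ for $x\in[0,1+\beta]$, $\ell(x):=x\log x-x+1$, giving the bulk bound; entropy/exponential duality for the tail) is the right one, and your bulk step, yielding $\rho_h(\beta)\sqrt{\lambda_T(I)}$, is complete and correct. The tail step, however, has two genuine gaps. First, your assertion that $\int_{\mathbb{Z}}(e^{\sigma|h|}-1)\nu(dz)<\infty$ for small $\sigma$ is false in general: $\nu$ may be infinite, $e^{\sigma|h|}-1\geq\sigma|h|$, and $h\in L^2(\nu)\cap\mathcal H$ does not give $|h|\in L^1(\nu)$; moreover $\mathcal H$ only provides exponential integrability on sets of \emph{finite} $\nu$-measure. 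The proof must therefore split $\{|h|\leq 1\}$ (handled with $h\in L^2(\nu)$, Cauchy--Schwarz and the Chebyshev bound $\nu_T(\{\varphi>1+\beta\}\cap(\mathbb{Z}\times I))\leq Ma^2(\e)/\ell(1+\beta)$) from $\{|h|>1\}$ (which has $\nu$-measure at most $\|h\|^2_{L^2(\nu)}<\infty$, where $\mathcal H$ applies); this splitting is absent from your argument.

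Second, the mechanism you propose for the decay $\Gamma_h(\beta)\downarrow0$ does not work as written. The ``absorption'' $1_{\{\varphi>1+\beta\}}\leq\ell(\varphi)/\ell(1+\beta)$ cannot be used multiplicatively against the integrand you retain: combined with $|\psi|\le\varphi/a(\e)$ and Young's inequality it produces terms like $\ell(\varphi)^2$ or $(e^{\sigma|h|}-1)\ell(\varphi)$, which the sole constraint $\int\ell(\varphi)\,d\nu_T\leq Ma^2(\e)$ does not control; and if you apply Young first, the primal term $\frac{1}{\sigma a(\e)}\int\ell(\varphi)\leq Ma(\e)/\sigma$ has no decay in $\beta$ for fixed $\sigma$, so (\ref{H-lemma-2-eq-1}) is not obtained. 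What actually works is: (i) the pointwise comparison $(\varphi-1)1_{\{\varphi>1+\beta\}}\leq\kappa_1(\beta)\ell(\varphi)$ with $\kappa_1(\beta)=\beta/\ell(1+\beta)\downarrow0$ (valid since $(x-1)/\ell(x)$ is decreasing on $(1,\infty)$), used where the $h$-factor is bounded, which is precisely what yields the $\Gamma_h(\beta)a(\e)$ term in (\ref{H-lemma-2-eq-2}); and (ii) on $\{|h|>1\}$, Chebyshev plus Cauchy--Schwarz against the dual factor, where the factor $a(\e)$ coming from Chebyshev is what cancels the prefactor $1/a(\e)$ and $\ell(1+\beta)^{-1/2}$ supplies the $\beta$-decay. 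Your closing remark about a $\beta$-dependent $\sigma$ gestures at the right balancing, but in the opposite direction: $\sigma$ need not be kept small for integrability (one pays $e^{\sigma^2/(2\delta)}$ via $\sigma|h|\leq\sigma^2/(2\delta)+\delta h^2/2$ and the definition of $\mathcal H$), and it must be sent to infinity slowly with $\beta$ so that $Ma(\e)/\sigma(\beta)\to0$ while the dual cost stays below $\sqrt{\ell(1+\beta)}$. The same criticism applies to your negative-side treatment: bounding $1_{\{\varphi<1-\beta\}}$ by $\ell(\varphi)/\ell(1-\beta)$ leaves $\int|h|\ell(\varphi)$ uncontrolled, whereas Chebyshev plus Cauchy--Schwarz with $h\in L^2(\nu)$ settles it (and no $\beta$-decay is needed there, since that set is empty for $\beta\geq1$).
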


\begin{lem}\label{H-lemma-3}
Let $h\in L^2(\nu)\cap\mathcal{H}$ be positive. Then for any $\beta>0$,
\begin{eqnarray}\label{H-lemma-3-eq}
\lim_{\e\rightarrow}\sup_{\varphi\in{{\mathcal{S}}^M_{\e}}}\int_{\mathbb{Z}\times I}|h(z)\psi(z,s)|1_{\{|\psi|>\beta/a(\e)\}}\nu(dz)ds=0.
\end{eqnarray}
\end{lem}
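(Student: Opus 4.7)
The plan is to decompose the integration domain using an auxiliary cutoff $\beta' \geq \beta$: invoke Lemma~\ref{H-lemma-2} to handle the regime $|\psi| > \beta'/a(\e)$, while for $\beta/a(\e) < |\psi| \leq \beta'/a(\e)$ use a Chebyshev-type estimate on the measure of $\{|\psi| > \beta/a(\e)\}$ combined with the $L^2$-integrability of $h$. Set $\ell(x) := x\log x - x + 1$ and $A_\e := \{(s,z) \in I \times \mathbb{Z} : |\psi(s,z)| > \beta/a(\e)\}$. Since $\psi = (\varphi-1)/a(\e)$ with $\varphi \geq 0$, one has $A_\e = \{|\varphi-1| > \beta\}$, and on $A_\e$ the density satisfies $\ell(\varphi) \geq \ell_\beta := \min\{\ell(1+\beta),\, \ell((1-\beta) \vee 0)\} > 0$. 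Chebyshev's inequality combined with the entropy bound $L_T(\varphi) \leq Ma^2(\e)$ yields, uniformly in $\varphi \in \mathcal{S}_{+,\e}^M$,
\[
(\nu \otimes \lambda_T)(A_\e) \leq \frac{L_T(\varphi)}{\ell_\beta} \leq \frac{M a^2(\e)}{\ell_\beta} \to 0 \text{ as } \e \to 0.
\]

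For any $\beta' \geq \beta$, split
\[
\int_{\mathbb{Z} \times I} |h \psi| 1_{A_\e} \, d\nu ds = J_1(\e, \beta') + J_2(\e, \beta'),
\]
where $J_2$ integrates over $\{|\psi| > \beta'/a(\e)\}$ and $J_1$ over $A_\e \cap \{|\psi| \leq \beta'/a(\e)\}$. Lemma~\ref{H-lemma-2} applied with parameter $\beta'$ gives $\sup_\psi J_2(\e, \beta') \leq \Gamma_h(\beta')(1 + \sqrt{\lambda_T(I)})$, which can be made arbitrarily small by taking $\beta'$ large, since $\Gamma_h(\beta') \downarrow 0$. For $J_1$, the bound $|\psi| \leq \beta'/a(\e)$ on the integration region and Cauchy--Schwarz yield
\[
J_1(\e, \beta') \leq \frac{\beta'}{a(\e)} \left(\int_{A_\e} h^2 \, d\nu ds\right)^{1/2} \sqrt{(\nu \otimes \lambda_T)(A_\e)} \leq \beta' \sqrt{\frac{M \lambda_T(I)}{\ell_\beta}} \left(\int_{A_\e} h^2 \, d\nu ds\right)^{1/2}.
\]

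Since $h^2 \in L^1(\lambda_T \otimes \nu)$ (because $\lambda_T(I) < \infty$ and $h \in L^2(\nu)$) and $(\nu \otimes \lambda_T)(A_\e) \to 0$ uniformly in $\varphi$, the absolute continuity of the integral yields $\sup_\psi \int_{A_\e} h^2 \, d\nu ds \to 0$ as $\e \to 0$; hence $\sup_\psi J_1(\e, \beta') \to 0$ for each fixed $\beta'$. Given $\eta > 0$, I would first fix $\beta'$ so that $\Gamma_h(\beta')(1 + \sqrt{\lambda_T(I)}) < \eta/2$ and then take $\e_0 > 0$ small enough that $\sup_\psi J_1(\e, \beta') < \eta/2$ for all $\e \in (0, \e_0)$, from which the conclusion follows. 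The main technical obstacle is securing this uniform-in-$\varphi$ convergence of $\int_{A_\e} h^2 \, d\nu ds$; the property $h \in \mathcal{H}$ is used only indirectly, through Lemma~\ref{H-lemma-2}, while the $L^2$-integrability of $h$ is what makes the Cauchy--Schwarz step effective.
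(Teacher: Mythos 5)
Your argument is correct, and it is worth noting that the paper itself offers no proof of this lemma at all: Lemmas \ref{H-lemma-1}--\ref{H-lemma-3} are simply quoted from \cite{BDG}, so what you have produced is a self-contained derivation rather than a variant of an internal argument. Your route is sound: the identification $\{|\psi|>\beta/a(\e)\}=\{|\varphi-1|>\beta\}$, the lower bound $\ell(\varphi)\geq \ell_\beta>0$ there (with the case $\beta\geq1$ harmless since the lower branch $\{\varphi<1-\beta\}$ is then empty), and Chebyshev against the entropy constraint $L_T(\varphi)\leq Ma^2(\e)$ give $(\lambda_T\otimes\nu)(A_\e)\leq Ma^2(\e)/\ell_\beta$ uniformly in $\varphi$; the far tail $\{|\psi|>\beta'/a(\e)\}$ is disposed of by \eqref{H-lemma-2-eq-1} with $\Gamma_h(\beta')\downarrow0$, and this use of Lemma \ref{H-lemma-2} is not circular, since that lemma is uniform in $\e$ and you only need its decay in the auxiliary threshold $\beta'$; the intermediate band is then killed by Cauchy--Schwarz, the cancellation of $a(\e)$ against $\sqrt{(\lambda_T\otimes\nu)(A_\e)}$, and the absolute continuity of the integral of the fixed integrable function $h^2$ on $I\times\mathbb{Z}$ (valid on any measure space, and uniform in $\varphi$ because the measure bound on $A_\e$ is). Two small blemishes, neither affecting the conclusion: the factor $\sqrt{\lambda_T(I)}$ in your displayed bound for $J_1$ does not come out of the computation (Cauchy--Schwarz plus the Chebyshev estimate give $J_1\leq \beta'\sqrt{M/\ell_\beta}\,\bigl(\int_{A_\e}h^2\,d\nu\,ds\bigr)^{1/2}$, and with $\lambda_T(I)<1$ your stated bound would actually be smaller than what the argument yields, so simply drop that factor); and, as you observe, $h\in\mathcal{H}$ enters only through Lemma \ref{H-lemma-2}, which is consistent with the hypotheses and with how \cite{BDG} use the exponential integrability. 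Compared with the original proof in \cite{BDG}, which works directly with superlinear-growth/Young-type inequalities for the pair $(e^x-1,\,\ell)$, your proof is more economical in the present context because it recycles the already-stated Lemma \ref{H-lemma-2} and needs only $h\in L^2(\nu)$ beyond that.
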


\subsection{The proof of MDP-1}
\begin{prp}\label{MDP-1}
If $g^\e\rightarrow g$ in $B_2(R)$, then $\mathcal{G}^0(g^\e)\rightarrow\mathcal{G}^0(g)$ in $C([0,T],\mathbb{V})$.
\end{prp}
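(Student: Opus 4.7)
The plan is to subtract the two skeleton equations, derive an energy estimate and close via Gronwall's inequality, and then invoke a compactness argument to dispatch the residual term driven by $g^\e-g$. Set $\eta^\e:=\mathcal{G}^0(g^\e)$, $\eta:=\mathcal{G}^0(g)$ and $w^\e:=\eta^\e-\eta$. Subtracting the two instances of (\ref{Theorem-1-eq-1}),
\[
\frac{dw^\e}{dt} = -\kappa\widehat{A}w^\e - \widehat{B}(w^\e,u^0) - \widehat{B}(u^0,w^\e) + \widehat{F}'(u^0,t)w^\e + \int_{\mathbb{Z}}\widehat{G}(u^0,z)(g^\e-g)(z,t)\nu(dz),
\]
with $w^\e(0)=0$. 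Testing against $w^\e$ in $\mathbb{V}$ (legitimate via (\ref{basis}) since the solutions live in $\mathbb{W}$), the $\widehat{A}$-contribution is nonnegative and is dropped, $\langle\widehat{B}(u^0,w^\e),w^\e\rangle=0$ by (\ref{Eq B-03}), and (\ref{Eq B-04}) combined with (\ref{Eq B-02}) and Lemma \ref{Regularity} gives
\[
|\langle\widehat{B}(w^\e,u^0),w^\e\rangle| = |\langle\widehat{B}(w^\e,w^\e),u^0\rangle| \leq C\|w^\e\|_\mathbb{V}^2\|u^0\|_\mathbb{W} \leq C_T\|w^\e\|_\mathbb{V}^2.
\]
The $\widehat{F}'$-term is $O(\|w^\e\|_\mathbb{V}^2)$ by (\ref{F-04}), while (\ref{Eq GS-01}) rewrites the forcing in the convenient form $\int_{\mathbb{Z}}(G(u^0,z),w^\e)_\mathbb{H}(g^\e-g)(z,t)\nu(dz)$. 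Integrating and applying Gronwall's inequality yields
\[
\sup_{t\in[0,T]}\|w^\e(t)\|_\mathbb{V}^2 \leq C_T\sup_{t\in[0,T]}\left|\int_0^t\!\!\int_{\mathbb{Z}}(G(u^0(s),z),w^\e(s))_\mathbb{H}(g^\e-g)(z,s)\nu(dz)\,ds\right|,
\]
so the task reduces to showing that this right-hand side vanishes.

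I would first establish a uniform bound $\sup_\e\|\eta^\e\|_{L^\infty([0,T],\mathbb{V})}\leq C(R,T)$ via the same chain-rule computation applied to $\eta^\e$ itself, using Young's inequality together with (\ref{G-02}) and $\|g^\e\|_2\leq R$ to absorb the forcing. From the equation I would then derive an $L^p([0,T],\mathbb{W}^*)$-bound on $d\eta^\e/dt$ (for some $p>1$), transferring the spatial derivatives in the bilinear terms onto the smooth factor $u^0$ by (\ref{Eq B-04}) so that only $\|\eta^\e\|_\mathbb{V}$ appears. Lemma \ref{Compact} applied with $\mathbb{K}_0=\mathbb{V}$, $\mathbb{K}=\mathbb{H}$, $\mathbb{K}_1=\mathbb{W}^*$ (and the compact embedding $\mathbb{V}\hookrightarrow\mathbb{H}$) then provides relative compactness of $\{\eta^\e\}$ in $L^2([0,T],\mathbb{H})$. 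Extracting a subsequence with $\eta^{\e_n}\to\tilde\eta$ strongly in $L^2([0,T],\mathbb{H})$ (and weakly in $L^2([0,T],\mathbb{V})$ from the uniform bound), I would pass to the limit in the integrated form of (\ref{Theorem-1-eq-1}): weak convergence in $L^2([0,T],\mathbb{V})$ handles the linear operators $\widehat{A}$, $\widehat{B}(\cdot,u^0)$, $\widehat{B}(u^0,\cdot)$ and $\widehat{F}'(u^0,t)\cdot$, while the weak convergence $g^{\e_n}\to g$ in $L^2(\nu_T)$ handles the forcing. The limit $\tilde\eta$ therefore solves the skeleton equation with datum $g$, and by the uniqueness noted in the remark following Theorem \ref{MDP Theorem}, $\tilde\eta=\eta$, so the full sequence converges.

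Setting $h^\e(s,z):=(G(u^0(s),z),w^\e(s))_\mathbb{H}$, the pointwise bound $|h^\e(s,z)|\leq M_G(z)(1+\|u^0(s)\|_\mathbb{V})|w^\e(s)|$ combined with $w^\e\to 0$ in $L^2([0,T],\mathbb{H})$, the uniform $\mathbb{V}$-bound on $w^\e$, and $M_G\in L^2(\nu)$ gives $\|h^\e\|_{L^2(\nu_T)}\to 0$ by dominated convergence. Cauchy--Schwarz then bounds the supremum on the right-hand side of the Gronwall estimate by $\|h^\e\|_{L^2(\nu_T)}\|g^\e-g\|_{L^2(\nu_T)}\leq 2R\|h^\e\|_{L^2(\nu_T)}\to 0$, uniformly in $t$, which gives $\eta^\e\to\eta$ in $C([0,T],\mathbb{V})$. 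The main technical obstacle I anticipate is the $L^p([0,T],\mathbb{W}^*)$-bound on $d\eta^\e/dt$ required for compactness: the bilinear contributions $\widehat{B}(\eta^\e,u^0)+\widehat{B}(u^0,\eta^\e)$ must be estimated using only the $\mathbb{V}$-norm of $\eta^\e$, which calls for a careful integration-by-parts argument based on (\ref{Eq B-04}) that shifts all spatial regularity onto $u^0\in\mathbb{W}\cap\mathbb{H}^4$ (Lemma \ref{Regularity}) and the $\mathbb{W}$-test function.
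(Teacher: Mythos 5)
Your overall architecture (uniform a priori bound, compactness via Lemma \ref{Compact}, identification of the limit through uniqueness of (\ref{Theorem-1-eq-1}), then an energy/Gronwall estimate on the difference $w^\e=\eta^\e-\eta$) parallels the paper's, and your final reduction is even slightly more economical: by applying Gronwall first, you only need $w^\e\to0$ in $L^2([0,T],\mathbb{H})$ to kill the residual $\int_0^t\int_{\mathbb{Z}}(G(u^0,z),w^\e)_{\mathbb{H}}(g^\e-g)\,\nu(dz)ds$, whereas the paper's concluding estimate (\ref{MDP-1-estimate-3-eq-2}) needs strong convergence in $L^2([0,T],\mathbb{V})$. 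However, there is a genuine gap at exactly the step you defer: the uniform bound on the time increments of $\eta^\e$ in $\mathbb{W}^*$ that Lemma \ref{Compact} (with $\mathbb{K}_0=\mathbb{V}$, $\mathbb{K}=\mathbb{H}$, $\mathbb{K}_1=\mathbb{W}^*$) requires. With only the uniform $\mathbb{V}$-bound in hand, every term of (\ref{Theorem-1-eq-1}) is controllable in $\mathbb{W}^*$ except $\widehat{B}(\eta^\e,u^0)$: the only available estimate, (\ref{Eq B-01}), costs $\|\eta^\e\|_{\mathbb{W}}$ because the three derivatives ${\rm curl}(\eta^\e-\alpha\Delta\eta^\e)$ sit on the \emph{first} argument, and the identity (\ref{Eq B-04}) you invoke only exchanges the second and third arguments, so it cannot shift any regularity off the first slot (likewise (\ref{Eq B-02}) helps only when the two arguments coincide, which is why it works in your Gronwall step but not here). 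An estimate of the type $\|\widehat{B}(\eta,u^0)\|_{\mathbb{W}^*}\le C\|\eta\|_{\mathbb{V}}\|u^0\|_{\mathbb{H}^3}$ is plausibly provable by integrating by parts twice in the first slot (using that $u^0$ and the $\mathbb{W}$-valued test function both vanish on $\partial\mathcal{O}$, so the relevant boundary terms drop), but it is not among the paper's lemmas and you do not prove it; the same missing estimate is needed a second time when you pass to the limit in $\widehat{B}(\eta^{\e_n},u^0)$ using only weak $L^2([0,T],\mathbb{V})$ convergence, since that requires $\widehat{B}(\cdot,u^0)$ to act as a bounded linear map from $\mathbb{V}$ to $\mathbb{W}^*$.

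The paper avoids this issue by running the compactness one level higher: it first proves the uniform bound $\sup_{t}\|\eta^\e(t)\|_{\mathbb{W}}^2\le C_R$ (estimate (\ref{MDP-1-estimate-1}), which exploits $u^0\in\mathbb{H}^4(\mathcal{O})$ from Lemma \ref{Regularity} and the integration-by-parts bound (\ref{MDP-1-estimate-1-eq-4})), after which (\ref{Eq B-01}) applies directly and yields the $W^{\alpha,2}([0,T],\mathbb{W}^*)$ bound (\ref{MDP-1-estimate-2}); Lemma \ref{Compact} then gives strong convergence in $L^2([0,T],\mathbb{V})$, and the difference estimate closes. So to complete your proposal you must either (i) establish the new trilinear estimate extending $\widehat{B}(\cdot,u^0)$ to arguments that are merely in $\mathbb{V}$, with values in $\mathbb{W}^*$ (a nontrivial lemma in its own right), or (ii) prove the uniform $\mathbb{W}$-bound on $\eta^\e$ first — at which point your argument essentially collapses onto the paper's. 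As written, the proof is incomplete at its acknowledged crux.
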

\begin{proof}
Set $\mathcal{G}^0(g^\e)=\eta^\e$ and $\mathcal{G}^0(g)=\eta$, First, we will prove that there exist $\e_0, C_R, C_{R,\alpha}$ such that
\begin{eqnarray}\label{MDP-1-estimate-1}
\sup_{t\in[0,T]}\|\eta^\e(t)\|_{\mathbb{W}}^2\leq C_R,
\end{eqnarray}
and for $\alpha\in(0,\frac{1}{2})$
\begin{eqnarray}\label{MDP-1-estimate-2}
\|\eta^\e\|_{W^{\alpha,2}([0,T],{\mathbb{W}}^*)}^2\leq C_{R,\alpha}.
\end{eqnarray}
By (\ref{Theorem-1-eq-1}), we have

\begin{eqnarray}\label{MDP-1-estimate-1-eq-1}
d\big(\eta^\e(t),e_i\big)_{\mathbb{W}}
&=&
-\kappa\big(\widehat{A}\eta^\e(t),e_i\big)_{\mathbb{W}}dt-
%\frac{1}{a(\e)}\Big(\widehat{B}(u^{\e}(t),u^{\e}(t))-\widehat{B}(u^0(t),u^0(t))\Big)dt
\Big(\big(\widehat{B}(\eta^\e(t),u^0(t)),e_i\big)_{\mathbb{W}}+\big(\widehat{B}(u^0(t),\eta^\e(t)),e_i\big)_{\mathbb{W}}\Big)dt\nonumber\\
& &+\big(\widehat{F}'(u^{0}(t),t)\eta^\e(t),e_i\big)_{\mathbb{W}}dt
+\int_{\mathbb{Z}}\big(\widehat{G}(u^{0}(t),z)g^\e(z,t),e_i\big)_{\mathbb{W}}\nu(dz)dt.
\end{eqnarray}
By a simple calculation, we know the fact:
\begin{eqnarray*}
\Big(\widehat{B}(u,v),u\Big)_{\mathbb{W}}=0, \text{ for any }u,v\in\mathbb{W}\cap\mathbb{H}^4(\mathcal{O}).
\end{eqnarray*}
Then, applying the chain rule to $\big(\eta^\e(t),e_i\big)_{\mathbb{W}}^2$ and summing over $i$ from 1 to $\infty$ yields
\begin{eqnarray}\label{MDP-1-estimate-1-eq-2}
& &\|\eta^\e(t)\|^2_{\mathbb{W}}+\frac{2\kappa}{\alpha}\int_0^t\|\eta^\e(s)\|_{\mathbb{W}}^2ds\nonumber\\
&=&
\frac{2\kappa}{\alpha}\int_0^t\Big(curl\big(\eta^\e(t)\big),curl\big(\eta^\e(s)-\alpha\Delta\eta^\e(s)\big)\Big)ds
%\frac{1}{a(\e)}\Big(\widehat{B}(u^{\e}(t),u^{\e}(t))-\widehat{B}(u^0(t),u^0(t))\Big)dt
-2\int_0^t\big(\widehat{B}(u^0(s),\eta^\e(s)),\eta^\e(s)\big)_{\mathbb{W}}ds\nonumber\\
& &+2\int_0^t\big(\widehat{F}'(u^{0}(s),s)\eta^\e(s),\eta^\e(s)\big)_{\mathbb{W}}dt
+2\int_0^t\int_{\mathbb{Z}}\big(\widehat{G}(u^{0}(s),z)g^\e(z,s),\eta^\e(s)\big)_{\mathbb{W}}\nu(dz)ds\nonumber\\
&=&I_1(t)+I_2(t)+I_3(t)+I_4(t).
\end{eqnarray}
Noticing the fact( see (4.61) in \cite{RS-10}):
\begin{eqnarray}
|curl(v)|^2\leq\frac{2}{\alpha}\|v\|_{\mathbb{V}}\ \ \text{ for any }v\in\mathbb{W},
\end{eqnarray}
we have
\begin{eqnarray}\label{MDP-1-estimate-1-eq-3}
I_1(t)
\leq
C\int_0^t\|\eta^\e(s)\|_{\mathbb{W}}^2ds.
\end{eqnarray}
By Condition {\bf (I)}, interpolation inequality and a straightforward calculation, we have
\begin{eqnarray}\label{MDP-1-estimate-1-eq-4}
I_2(t)
&\leq&
C\int_0^t\Big(-\Delta\big(u^0(s)-\alpha\Delta u^0(s)\big)\times\eta^\e(s),curl\big(\eta^\e(s)-\alpha\Delta\eta^\e(s)\big)\Big)\nonumber\\
&\leq&
C\int_0^t\|u^0(s)\|_{\mathbb{H}^4(\mathcal{O})}\|\eta^\e(s)\|_{L^\infty(\mathcal{O})}\|\eta^\e(s)\|_{\mathbb{W}}ds\nonumber\\
&\leq&
C\int_0^t\|\eta^\e(s)\|_{\mathbb{W}}^2ds.
\end{eqnarray}
By Condition {\bf(F2)}, we have
\begin{eqnarray}\label{MDP-1-estimate-1-eq-5}
I_3(t)
\leq
C\int_0^t\|\eta^\e(s)\|_{\mathbb{W}}^2ds.
\end{eqnarray}
By Condition {\bf(G)}, we have
\begin{eqnarray}\label{MDP-1-estimate-1-eq-6}
I_4(t)
&\leq&
C\int_0^t\int_{\mathbb{Z}}M_G(z)\big(1+\|u^0(s)\|_{\mathbb{W}}\big)|g^\e(s,z)|\|\eta^\e(s)\|_{\mathbb{W}}\nu(dz)ds\nonumber\\
&\leq&
C\sup_{s\in[0,T]}\big(1+\|u^0(s)\|_{\mathbb{W}}\big)
\int_0^t\int_{\mathbb{Z}}\big(M^2_G(z)+|g^\e(s,z)|^2\big)\big(1+\|\eta^\e(s)\|_{\mathbb{W}}^2\big)\nu(dz)ds\nonumber\\
&\leq&
C\int_0^T\int_{\mathbb{Z}}M^2_G(z)\nu(dz)ds+C\int_0^T\int_{\mathbb{Z}}|g^\e(s,z)|^2\nu(dz)ds\nonumber\\
& &+C\int_0^t\|\eta^\e(s)\|_{\mathbb{W}}^2ds\int_{\mathbb{Z}}M^2_G(z)\nu(dz)
+C\int_0^t\int_{\mathbb{Z}}|g^\e(s,z)|^2\|\eta^\e(s)\|_{\mathbb{W}}^2\nu(dz)ds\nonumber\\
&\leq&
C(T+R)+C\int_0^t\|\eta^\e(s)\|_{\mathbb{W}}^2\big(1+\int_{\mathbb{Z}}|g^\e(s,z)|^2\nu(dz)\big)ds.
\end{eqnarray}
Combining (\ref{MDP-1-estimate-1-eq-2})-(\ref{MDP-1-estimate-1-eq-6}), we have
\begin{eqnarray}\label{MDP-1-estimate-1-eq-7}
\|\eta^\e(t)\|^2_{\mathbb{W}}+\frac{2\kappa}{\alpha}\int_0^t\|\eta^\e(s)\|_{\mathbb{W}}^2ds
\leq
C(T+R)+C\int_0^t\|\eta^\e(s)\|_{\mathbb{W}}^2\big(1+\int_{\mathbb{Z}}|g^\e(s,z)|^2\nu(dz)\big)ds.
\end{eqnarray}
Applying Gronwall's inequality, we obtain (\ref{MDP-1-estimate-1}).\\
\vskip 0.2cm

\indent Now we prove (\ref{MDP-1-estimate-2}). By (\ref{Theorem-1-eq-1})
\begin{eqnarray}\label{MDP-1-estimate-2-eq-1}
\eta^\e(t)
&=&
-\kappa\int_0^t\widehat{A}\eta^\e(s)ds-
%\frac{1}{a(\e)}\Big(\widehat{B}(u^{\e}(t),u^{\e}(t))-\widehat{B}(u^0(t),u^0(t))\Big)dt
\int_0^t\widehat{B}(\eta^\e(s),u^0(s))ds-\int_0^t\widehat{B}(u^0(s),\eta^\e(s))ds\nonumber\\
& &+\int_0^t\widehat{F}'(u^{0}(s),s)\eta^\e(s)ds+\int_0^t\int_{\mathbb{Z}}\widehat{G}(u^{0}(s),z)g^\e(z,s)\nu(dz)ds\nonumber\\
&=&
I_1+I_2+I_3+I_4+I_5.
\end{eqnarray}
Similarly as the proof of (5.38) in Zhai, Zhang, Zheng \cite{ZZZ1}, we have
\begin{eqnarray}\label{MDP-1-estimate-2-eq-2}
\|I_1+I_2+I_3+I_4\|_{W^{\alpha,2}([0,T],{\mathbb{W}}^*)}^2\leq C_{R,\alpha}.
\end{eqnarray}
For $I_5$, we have
\begin{eqnarray}\label{MDP-1-estimate-2-eq-3}
\|I_5(t)-I_5(s)\|_{\mathbb{V}}^2
&=&
\|\int_s^t\int_{\mathbb{Z}}\widehat{G}(u^{0}(l),z)g^\e(z,l)\nu(dz)dl\|_{\mathbb{V}}^2\nonumber\\
&\leq&
C\Big\{\int_s^t\int_{\mathbb{Z}}M_G(z)\big(1+\|u^{0}(l)\|_{\mathbb{V}}\big)|g^\e(z,l)|\nu(dz)dl\Big\}^2\nonumber\\
&\leq&
C\int_s^t\int_{\mathbb{Z}}M^2_G(z)\big(1+\|u^{0}(l)\|^2_{\mathbb{V}}\big)\int_s^t\int_{\mathbb{Z}}|g^\e(z,l)|^2\nu(dz)dl\nonumber\\
&\leq&
C\sup_{s\in[0,T]}\big(1+\|u^{0}(s)\|^2_{\mathbb{V}}\big)\int_s^t\int_{\mathbb{Z}}M^2_G(z)\nu(dz)dl\int_s^t\int_{\mathbb{Z}}|g^\e(z,l)|^2\nu(dz)dl\nonumber\\
&\leq&
C_R(t-s),
\end{eqnarray}
which implies,
\begin{eqnarray}\label{MDP-1-estimate-2-eq-4}
\|I_5\|_{W^{\alpha,2}([0,T],{\mathbb{W}}^*)}^2\leq C_{R,\alpha}.
\end{eqnarray}
Combining (\ref{MDP-1-estimate-2-eq-2}) and (\ref{MDP-1-estimate-2-eq-4}), we obtain (\ref{MDP-1-estimate-2}).\\

Hence, by (\ref{MDP-1-estimate-1}) and (\ref{MDP-1-estimate-2}), we can assert the existence of element $\widehat{\eta}\in C([0,T],\mathbb{V})\cap L^\infty([0,T],\mathbb{W})$ and a subsequence $\eta^{\e_k}$ such that, as $k\rightarrow\infty$\\
\indent (a) $\sup_{s\in[0,T]}\|\widehat{\eta}(s)\|_\mathbb{W}^{2}\leq C_{R}$,\\
\indent (b) $\eta^{\e_k}\rightarrow \widehat{\eta}$ in $L^2([0,T],\mathbb{W})$ weakly,\\
\indent (c) $\eta^{\e_k}\rightarrow \widehat{\eta}$ in $L^\infty([0,T],\mathbb{V})$ weak-star.\\
Moreover, applying Lemma \ref{Compact}, we have \\
\indent (d) $\eta^{\e_k}\rightarrow \widehat{\eta}$ in $L^2([0,T],\mathbb{V})$ strongly.\\
By the argument as that in the proof of Proposition 4.4 in Zhai, Zhang and Zheng \cite{ZZZ2}, we know $\widehat{\eta}=\eta$.\\
\indent Next, we prove $\eta^{\e_k}\rightarrow \eta$ in $C([0,T],\mathbb{V})$. Let $Z^{\e_k}=\eta^{\e_k}-\eta$, then
\begin{eqnarray}\label{MDP-1-estimate-3-eq-1}
dZ^{\e_k}(t)
&=&
-\kappa\widehat{A}Z^{\e_k}(t)dt-\Big(\widehat{B}(Z^{\e_k}(t),u^0(t))+\widehat{B}(u^0(t),Z^{\e_k}(t))\Big)dt\nonumber\\
& &+\widehat{F}'(u^{0}(t),t)Z^{\e_k}(t)dt+\int_{\mathbb{Z}}\widehat{G}(u^{0}(t),z)\big(g^\e(z,t)-g(z,t)\big)\nu(dz)dt,
\end{eqnarray}
with initial value $Z^{\e_k}(0)=0$.\\
\indent Applying the chain rule, by Lemma \ref{Lem-B-01} and Conditions {\bf (F1), (F2)} and {\bf (G)}, we have
\begin{eqnarray}\label{MDP-1-estimate-3-eq-2}
& &\|Z^{\e_k}(t)\|_{\mathbb{V}}^2+2\kappa\int_0^t\|Z^{\e_k}(s)\|^2ds\nonumber\\
&=&
2\int_0^t\Big(\widehat{B}(Z^{\e_k}(s),Z^{\e_k}(s)),u^0(s)\Big)_{{\mathbb{W}}^*,\mathbb{W}}ds
+2\int_0^t\big(\widehat{F}'(u^{0}(s),s)Z^{\e_k}(s),Z^{\e_k}(s)\big)_{\mathbb{V}}ds\nonumber\\
& &
+2\int_0^t\int_{\mathbb{Z}}\big(\widehat{G}(u^{0}(s),z)\big(g^\e(z,s)-g(z,s)\big),Z^{\e_k}(s)\big)_{\mathbb{V}}\nu(dz)ds\nonumber\\
&\leq&
C\sup_{s\in[0,T]}\|u^0(s)\|_{\mathbb{W}}\int_0^t\|Z^{\e_k}(s)\|_{\mathbb{V}}^2ds
+C\int_0^t\|Z^{\e_k}(s)\|_{\mathbb{V}}^2ds\nonumber\\
& &
+C\sup_{s\in[0,T]}\big(1+\|u^0(s)\|_{\mathbb{V}}\big)
\int_0^t\int_{\mathbb{Z}}M_G(z)\big|g^\e(z,s)-g(z,s)\big|\|Z^{\e_k}(s)\|_{\mathbb{V}}\nu(dz)ds\nonumber\\
&\leq&
C\int_0^t\|Z^{\e_k}(s)\|_{\mathbb{V}}^2ds
+C\big\{\int_0^t\int_{\mathbb{Z}}M^2_G(z)\nu(dz)\|Z^{\e_k}(s)\|_{\mathbb{V}}^2ds\big\}^\frac{1}{2}
\big\{\int_0^t\int_{\mathbb{Z}}\big(g^\e(z,s)-g(z,s)\big)^2\nu(dz)ds\big\}^\frac{1}{2}\nonumber\\
&\leq&
C\int_0^t\|Z^{\e_k}(s)\|_{\mathbb{V}}^2ds
+C_{R}\big\{\int_0^t\|Z^{\e_k}(s)\|_{\mathbb{V}}^2ds\big\}^\frac{1}{2},
\end{eqnarray}
in the last inequality, we have used $M_{G}\in L^2(\nu_T)$ and $g^\e,g\in B_2(R)$.\\
\indent Using (d), it follows that
\begin{eqnarray}\label{MDP-1-estimate-3}
\lim_{\e\rightarrow0}\sup_{t\in[0,T]}\|Z^{\e_k}(t)\|_{\mathbb{V}}\leq\lim_{\e\rightarrow0}\Big\{C\int_0^T\|Z^{\e_k}(s)\|_{\mathbb{V}}^2ds
+C_{R}\big\{\int_0^T\|Z^{\e_k}(s)\|_{\mathbb{V}}^2ds\big\}^\frac{1}{2}\Big\}=0.
\end{eqnarray}
The proof is complete.
\end{proof}

\subsection{The proof of MDP-2}

\indent By Girsonav transform theorem, we can see that the following equation has a unique solution:
\begin{eqnarray}\label{MDP-1-estimate-eq-1}
dX^\e(t)
&=&
-\kappa\widehat{A}X^{\e}(t)dt-\widehat{B}(X^{\e}(t),X^{\e}(t))dt+\widehat{F}(X^{\e}(t),t)\nonumber\\
& &+\e\int_{\mathbb{Z}}\widehat{G}(X^{\e}(t-),z)\widetilde{N}^{\e^{-1}\varphi^\e}(dzdt)
+\int_{\mathbb{Z}}\widehat{G}(X^{\e}(t),z)\big(\varphi^\e(t,z)-1\big)\nu(dz)dt.
\end{eqnarray}
with initial value $X^\e(0)=u_0$. We need the following estimates:
\begin{lem}\label{MDP-2-prp-1}
There exists a $\e_0>0$ such that
\begin{eqnarray}\label{MDP-2-estimate-1}
\sup_{\e\in(0,\e_0)}E\Big[\|X^{\e}(t)\|_\mathbb{W}^2\Big]\leq C_{\e_0}<\infty.
\end{eqnarray}
\end{lem}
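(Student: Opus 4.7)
The plan is to apply Itô's formula to $\|X^\e(t)\|_{\mathbb{W}}^2$ (rigorously via a Galerkin approximation in the basis $\{e_i\}$, which lies in $\mathbb{H}^4(\mathcal{O})$ by \eqref{regularity of basis}, so that the cancellation $\big(\widehat{B}(u,v),u\big)_{\mathbb{W}}=0$ already used in the MDP-1 computation remains valid), introduce the stopping time $\tau_N := \inf\{t : \|X^\e(t)\|_{\mathbb{W}}\geq N\}\wedge T$ to neutralize the compensated Poisson martingale under expectation, and then conclude by Gronwall, letting $N\to\infty$ via Fatou.

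After Itô, one obtains on the left-hand side $\|X^\e(t)\|_{\mathbb{W}}^2 + \tfrac{2\kappa}{\alpha}\int_0^t\|X^\e\|_{\mathbb{W}}^2\,ds$, and on the right-hand side: (a) a dissipative correction $\tfrac{2\kappa}{\alpha}\int_0^t(\mathrm{curl}\,X^\e, \mathrm{curl}(X^\e-\alpha\Delta X^\e))\,ds$, handled as in \eqref{MDP-1-estimate-1-eq-3} by $C\int_0^t\|X^\e\|_{\mathbb{W}}^2\,ds$; (b) the nonlinear term $-2\int_0^t(\widehat{B}(X^\e,X^\e),X^\e)_{\mathbb{W}}\,ds$, which vanishes; (c) a drift $2\int_0^t(\widehat{F}(X^\e,s),X^\e)_{\mathbb{W}}\,ds \leq C\int_0^t\|X^\e\|_{\mathbb{W}}^2\,ds$, using Condition \textbf{(F1)} (Lipschitz on $\mathbb{V}$ with $F(0,\cdot)=0$) together with the continuity of $(I+\alpha A)^{-1}:\mathbb{V}\to\mathbb{W}$ provided by Lemma \ref{Lem GS}; (d) a controlled drift $2\int_0^t\int_{\mathbb{Z}}(\widehat{G},X^\e)_{\mathbb{W}}(\varphi^\e-1)\,\nu(dz)\,ds$; (e) the jump compensator $\e\int_0^t\int_{\mathbb{Z}}\|\widehat{G}\|_{\mathbb{W}}^2\,\varphi^\e\,\nu(dz)\,ds$; plus the martingale that vanishes after stopping and taking expectation. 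Using $\|\widehat{G}(X^\e,z)\|_{\mathbb{W}}\leq C M_G(z)(1+\|X^\e\|_{\mathbb{W}})$ from Condition \textbf{(G)}, terms (d) and (e) are bounded respectively by $C\int_0^t(1+\|X^\e\|_{\mathbb{W}}^2)\big(\int_{\mathbb{Z}} M_G|\varphi^\e-1|\,\nu(dz)\big)ds$ and $C\e\int_0^t(1+\|X^\e\|_{\mathbb{W}}^2)\big(\int_{\mathbb{Z}} M_G^2\,\varphi^\e\,\nu(dz)\big)ds$. Writing $\psi^\e=(\varphi^\e-1)/a(\e)$, the total time integral of $\int_{\mathbb{Z}}M_G|\varphi^\e-1|\,\nu(dz) = a(\e)\int_{\mathbb{Z}} M_G|\psi^\e|\,\nu(dz)$ is uniformly $O(a(\e))$ by Lemma \ref{H-lemma-2}, and the total time integral of $\e\int_{\mathbb{Z}} M_G^2\,\varphi^\e\,\nu(dz)$ is uniformly $O(\e)$ by Lemma \ref{H-lemma-1}, so both weights assemble into a single Gronwall weight $\theta^\e(s)$ with $\int_0^T\theta^\e(s)\,ds\leq C$ uniformly in $\e\in(0,\e_0)$.

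The main obstacle is the combined control of (d) and (e): the $L^1$-type control term must be closed off using the moderate-deviations scaling $\psi^\e=(\varphi^\e-1)/a(\e)$ together with the integrability $M_G\in L^2(\nu)\cap\mathcal{H}$ through Lemma \ref{H-lemma-2}, while the $L^2$-type jump compensator, whose intensity picks up an $O(1)$ contribution from the Lévy measure, is salvaged only by its prefactor $\e$ together with Lemma \ref{H-lemma-1}. Once both weights are shown to be uniformly $L^1$ in time, the Gronwall step gives $E[\|X^\e(t\wedge\tau_N)\|_{\mathbb{W}}^2]\leq C_{\e_0}$ independently of $N$, and Fatou delivers the stated bound, uniformly for $\e\in(0,\e_0)$.
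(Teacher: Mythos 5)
Your overall strategy (It\^o/chain rule for $\|\cdot\|_{\mathbb{W}}^2$ at the Galerkin level, cancellation of the $\widehat{B}$-term, the curl correction and $\widehat{F}$-term bounded by $C\int_0^t\|\cdot\|_{\mathbb{W}}^2ds$, and Lemmas \ref{H-lemma-1}, \ref{H-lemma-2} for the compensator and control terms) is the same as the paper's, but there is a genuine gap in how you close the estimate. Your plan is: stop at $\tau_N$, take expectation so that the compensated Poisson integral disappears, then apply Gronwall with the weight $\theta^\e(s)=C+Ca(\e)\int_{\mathbb{Z}}M_G|\psi^\e(s,z)|\nu(dz)+C\e\int_{\mathbb{Z}}M_G^2\varphi^\e(s,z)\nu(dz)$. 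This order of operations does not work: $\theta^\e$ is \emph{random} (it depends on the control $\varphi^\e$, which is adapted and correlated with $X^\e$), so after taking expectations the term $E\int_0^{t}\theta^\e(s)\|X^\e(s\wedge\tau_N)\|_{\mathbb{W}}^2ds$ cannot be bounded by $\int_0^t g(s)\,E\|X^\e(s\wedge\tau_N)\|_{\mathbb{W}}^2ds$ for a deterministic $g$; Lemmas \ref{H-lemma-1}--\ref{H-lemma-2} only bound the \emph{time integral} of $\theta^\e$ uniformly, not $\theta^\e(s)$ pointwise in $s$. The only $N$-independent bound you can extract this way is $(1+N^2)\,E\int_0^T\theta^\e(s)ds$, which defeats the Fatou step. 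Conversely, if you use the uniform bound $\int_0^T\theta^\e\le C$ where it is actually valid — in a \emph{pathwise} Gronwall before taking expectations — then the martingale no longer enters linearly at a single time; its running supremum appears, and $E[\sup_t|M^\e(t\wedge\tau_N)|]$ is certainly not zero.

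This is precisely why the paper's proof of Lemma \ref{MDP-2-prp-2} proceeds differently: Gronwall is applied pathwise first (see \eqref{MDP-1-estimate-eq-8}--\eqref{MDP-1-estimate-eq-9}), the supremum of the stochastic integral is then controlled by the Burkholder--Davis--Gundy inequality and Young's inequality as in \eqref{MDP-1-estimate-eq-11}, and the resulting terms $\tfrac14 E[\sup_t\|X^{n,\e}\|_{\mathbb{W}}^2]$ and $C\e\varsigma_{M_G}(a^2(\e)+T)\,E[\sup_t\|X^{n,\e}\|_{\mathbb{W}}^2]$ are absorbed into the left-hand side by choosing $\e_0$ with $C\e_0\varsigma_{M_G}(a^2(\e_0)+T)\le\tfrac18$ — an absorption (and hence a smallness condition on $\e_0$, plus the a priori finiteness of $E[\sup_t\|X^{n,\e}\|_{\mathbb{W}}^2]$ available at the finite-dimensional level) that your argument never produces. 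Finally, the passage from the Galerkin bound to $X^\e$ is done via the weak convergence $X^{n,\e}\to X^\e$ in $L^4(\Omega;L^\infty([0,T];\mathbb{W}))$ from \cite{SZZ}, rather than by stopping times for $X^\e$ itself; your stopping-time/Fatou finish would be fine in principle, but only after the Gronwall/martingale step is repaired along the lines above (BDG plus absorption, or a stochastic Gronwall lemma, neither of which you invoke).
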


\indent Let $\Pi_n$ be the projection operator from $\mathbb{W}$ to $\mathbb{W}$ defined as
\begin{eqnarray*}
\Pi_{n}u=\sum_{i=1}^n\big(u,e_i\big)_{\mathbb{W}}e_i,\ \ u\in\mathbb{W}.\\
\end{eqnarray*}
\indent Set $\mathbb{W}_n=Span\{e_1,\cdots,e_n\}$. Let $\widetilde{X}^{n,\e}\in{\mathbb{W}}_n$ be the Garlerkin approximations of (\ref{MDP-1-estimate-eq-1}) satisfying

\begin{eqnarray}\label{MDP-1-estimate-eq-2}
& &d\big(X^{n,\e}(s),e_i\big)_\mathbb{V}
+\kappa\big(\Pi_{n}\widehat{A}X^{n,\e}(s),e_i\big)_{\mathbb{V}}ds\nonumber\\
&=&
-\big(\Pi_{n}\widehat{B}(X^{n,\e}(s),X^{n,\e}(s)),e_i\big)_{\mathbb{V}}ds
+\big(\Pi_{n}\widehat{F}(X^{n,\e}(s),s),e_i\big)_{\mathbb{V}}ds\nonumber\\
& &
+\big(\int_{\mathbb{Z}}\Pi_{n}\widehat{G}(X^{n,\e}(s),z)(\varphi_\e(s,z)-1)\nu(dz),e_i\big)_{\mathbb{V}}ds\nonumber\\
& &
+\e\int_{\mathbb{Z}}\big(\Pi_{n}\widehat{G}(X^{n,\e}(s-),z),e_i\big)_{\mathbb{V}}\widetilde{N}^{\e^{-1}\varphi_\e}(dzds),
\end{eqnarray}
for any $i=1,2,\cdots,n.$\\
\indent As in the proof of Theorem 3.2 in Shang, Zhai and Zhang \cite{SZZ}, one can show that $\lim_{n\rightarrow\infty}X^{n,\e}=X^{\e}$ with respect to the weak topology in $L^4(\Omega,\mathcal{F},P;L^\infty([0,T];\mathbb{W}))$. Hence Lemma \ref{MDP-2-prp-1} will follow from the following result.

\begin{lem}\label{MDP-2-prp-2}
There exists $\e_0>0$ such that
\begin{eqnarray}\label{MDP-2-estimate-2}
\sup_{\e\in(0,\e_0)}E\Big[\sup_{t\in[0,T]}\|X^{n,\e}(t)\|_\mathbb{W}^{2}\Big]\leq C_{\e_0}<\infty.
\end{eqnarray}

\end{lem}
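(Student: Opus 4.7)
The plan is to apply It\^o's formula for jump processes to $\|X^{n,\e}(t)\|_{\mathbb{W}}^{2}=\sum_{i=1}^{n}(X^{n,\e}(t),e_i)_{\mathbb{W}}^{2}$, using the finite-dimensional SDE system (\ref{MDP-1-estimate-eq-2}) together with the identity $(v,e_i)_{\mathbb{W}}=\lambda_i(v,e_i)_{\mathbb{V}}$ from (\ref{Basis}). Because $X^{n,\e}\in\mathbb{W}_n\subset\mathbb{H}^4(\mathcal{O})$ by (\ref{regularity of basis}), all calculations are justified in the classical sense. The resulting identity splits the variation of $\|X^{n,\e}\|_{\mathbb{W}}^{2}$ into a dissipative contribution $-2\kappa(\widehat{A}X^{n,\e},X^{n,\e})_{\mathbb{W}}$, a nonlinear drift which vanishes by the identity $(\widehat{B}(u,u),u)_{\mathbb{W}}=0$ stated just before (\ref{MDP-1-estimate-1-eq-2}), an $\widehat{F}$-drift, a $(\varphi^{\e}-1)\nu\otimes dt$-drift, a compensated Poisson martingale of jump size $\e$, and a compensator quadratic-variation term of order $\e$.

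For the dissipative term, following exactly the manipulation in (\ref{MDP-1-estimate-1-eq-2})–(\ref{MDP-1-estimate-1-eq-4}) yields a leading coercive factor $\tfrac{2\kappa}{\alpha}\|X^{n,\e}\|_{\mathbb{W}}^{2}$ plus an absorbable $C\|X^{n,\e}\|_{\mathbb{W}}^{2}$. The $\widehat{F}$-drift is controlled by $C(1+\|X^{n,\e}\|_{\mathbb{W}}^{2})$ via (F1)–(F2). For the delicate drift from the $(\varphi^\e-1)\nu$-compensator, observe that $\|\widehat{G}(x,z)\|_{\mathbb{W}}\leq C M_G(z)(1+\|x\|_{\mathbb{W}})$ follows from (G) and Lemma \ref{Lem GS}, so writing $\varphi^\e-1=a(\e)\psi^\e$ one obtains a bound
\[
\Bigl|\Bigl(\int_{\mathbb{Z}}\widehat{G}(X^{n,\e},z)(\varphi^\e-1)\nu(dz),X^{n,\e}\Bigr)_{\mathbb{W}}\Bigr|\leq C\,a(\e)\bigl(1+\|X^{n,\e}\|_{\mathbb{W}}^{2}\bigr)\int_{\mathbb{Z}}M_G(z)|\psi^\e(s,z)|\nu(dz),
\]
whose time integral over $[0,T]$ is bounded by $Ca(\e)$ after applying Lemma \ref{H-lemma-2} with $\beta=1$; hence it acts as a Gronwall coefficient with vanishing $L^1$ norm as $\e\to 0$.

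The two jump contributions are handled by the $\e$-scale of the noise: the compensator quadratic-variation term
$\e\int_{0}^{t}\int_{\mathbb{Z}}\|\widehat{G}(X^{n,\e},z)\|_{\mathbb{W}}^{2}\varphi^\e\nu(dz)ds$
is dominated, via (G) and Lemma \ref{H-lemma-1} with $h=M_G$, by $C\e\,\varsigma_{M_G}(a^{2}(\e)+T)\bigl(1+\sup_{s\le t}\|X^{n,\e}(s)\|_{\mathbb{W}}^{2}\bigr)$, which is $o(1)\cdot\bigl(1+\sup\|X^{n,\e}\|_{\mathbb{W}}^{2}\bigr)$ as $\e\to 0$. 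For the compensated martingale the Burkholder--Davis--Gundy inequality gives an expectation bound of order $\sqrt{\e}\,E[\,\sup_{s\le t}\|X^{n,\e}(s)\|_{\mathbb{W}}\cdot(\int_{0}^{T}\int_{\mathbb{Z}}M_G^{2}(z)\varphi^\e\nu(dz)ds)^{1/2}\,]$; invoking Lemma \ref{H-lemma-1} once more and Young's inequality absorbs $\tfrac{1}{2}E[\sup_{s\le t}\|X^{n,\e}(s)\|_{\mathbb{W}}^{2}]$ into the left-hand side, leaving a remainder that is $o(1)$. Taking expectation of the supremum, choosing $\e_0$ small so that all $o(1)$ coefficients are below $1/4$, and applying Gronwall's lemma to the remaining inequality yields (\ref{MDP-2-estimate-2}).

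The main obstacle is the $(\varphi^\e-1)\nu$-drift: although $\varphi^\e$ can be arbitrarily large pointwise, the moderate-deviation factorization $\varphi^\e-1=a(\e)\psi^\e$ together with Lemma \ref{H-lemma-2} is precisely what provides an $O(a(\e))$ pre-factor that makes the Gronwall step close uniformly in $\e$; identifying this as the "right" place to deploy Lemma \ref{H-lemma-2} (rather than the Cauchy--Schwarz estimate $\|\psi^\e\|_{2}$, which is only controlled after truncation at $\beta/a(\e)$) is the key technical point.
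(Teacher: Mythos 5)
Your proposal is correct and follows essentially the same route as the paper: Itô's formula for $\|X^{n,\e}\|_{\mathbb W}^2$ on the Galerkin system via (\ref{Basis}), cancellation of the nonlinear term by $(\widehat B(u,u),u)_{\mathbb W}=0$, the curl manipulation for the dissipative part, the smoothing bounds $\|\widehat F\|_{\mathbb W}\le C\|X^{n,\e}\|_{\mathbb W}$ and $\|\widehat G\|_{\mathbb W}\le CM_G(1+\|X^{n,\e}\|_{\mathbb W})$, the factorization $\varphi^\e-1=a(\e)\psi^\e$ handled by Lemma \ref{H-lemma-2}, and BDG plus Young plus Lemma \ref{H-lemma-1} with a small $\e_0$ before Gronwall. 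The only (harmless) cosmetic differences are your choice $\beta=1$ in Lemma \ref{H-lemma-2} and writing the jump quadratic term directly in compensated form.
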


\begin{proof}
Multiplying $\lambda_i$ at both sides of the equation (\ref{MDP-1-estimate-eq-2}), we can use (\ref{Basis}) to obtain
\begin{eqnarray}\label{MDP-1-estimate-eq-3}
& &d\big(X^{n,\e}(s),e_i\big)_\mathbb{W}
+\kappa\big(\widehat{A}X^{n,\e}(s),e_i\big)_{\mathbb{W}}ds\nonumber\\
&=&
-\big(\widehat{B}(X^{n,\e}(s),X^{n,\e}(s)),e_i\big)_{\mathbb{W}}ds
+\big(\widehat{F}(X^{n,\e}(s),s),e_i\big)_{\mathbb{W}}ds\nonumber\\
& &
+\big(\int_{\mathbb{Z}}\widehat{\sigma}(X^{n,\e}(s),z)(\varphi_\e(s,z)-1)\nu(dz),e_i\big)_{\mathbb{W}}ds\nonumber\\
& &
+\e\int_{\mathbb{Z}}\big(\widehat{\sigma}(X^{n,\e}(s-),z),e_i\big)_{\mathbb{W}}\widetilde{N}^{\e^{-1}\varphi_\e}(dzds),
\end{eqnarray}
for any $i\in\mathbb{N}$.\\
\indent Applying $It\hat{o}$'s formula to $\big(X^{n,\e}(s),e_i\big)_\mathbb{W}^2$ and then summing over $i$ from $1$ to $n$ yields
\begin{eqnarray}\label{MDP-1-estimate-eq-4}
\|X^{n,\e}(t)\|_\mathbb{W}^2
&=&
\|X_0\|_\mathbb{W}^2-2\kappa\int_0^t\big(\widehat{A}X^{n,\e}(s),X^{n,\e}(s)\big)_{\mathbb{W}}ds\nonumber\\
& &
-2\int_0^t\big(\widehat{B}(X^{n,\e}(s),X^{n,\e}(s)),X^{n,\e}(s)\big)_{\mathbb{W}}ds
+2\int_0^t\big(\widehat{F}(X^{n,\e}(s),s),X^{n,\e}(s)\big)_{\mathbb{W}}ds\nonumber\\
& &
+2\int_0^t\big(\int_{\mathbb{Z}}
\widehat{G}(X^{n,\e}(s),z)(\varphi_\e(s,z)-1)\nu(dz),X^{n,\e}(s)\big)_{\mathbb{W}}ds\nonumber\\
& &
+2\e\int_0^t\int_{\mathbb{Z}}\big(
\widehat{G}(X^{n,\e}(s-),z),X^{n,\e}(s-)\big)_{\mathbb{W}}\widetilde{N}^{\e^{-1}\varphi_\e}(dzds)\nonumber\\
& &
+\e^2\int_0^t\int_{\mathbb{Z}}\|\widehat{G}(X^{n,\e}(s-),z)\|_{\mathbb{W}}^2N^{\e^{-1}\varphi_\e}(dzds).
\end{eqnarray}
By a simple calculation, we get
\begin{eqnarray}\label{MDP-1-estimate-eq-5}
& &\|X^{n,\e}(t)\|_\mathbb{W}^2
+\frac{2\kappa}{\alpha}\int_0^t\|X^{n,\e}(s)\|_\mathbb{W}^2ds\nonumber\\
&=&
\|X_0\|_\mathbb{W}^2+\frac{2\kappa}{\alpha}\int_0^t
\Big(curl\big(X^{n,\e}(s)\big),curl\big(X^{n,\e}(s)-\alpha\Delta X^{n,\e}(s)\big)\Big)ds\nonumber\\
& &
+2\int_0^t\big(\widehat{F}(X^{n,\e}(s),s),X^{n,\e}(s)\big)_{\mathbb{W}}ds
+2\int_0^t\big(\int_{\mathbb{Z}}
\widehat{G}(X^{n,\e}(s),z)(\varphi_\e(s,z)-1)\nu(dz),X^{n,\e}(s)\big)_{\mathbb{W}}ds\nonumber\\
& &
+2\e\int_0^t\int_{\mathbb{Z}}\big(
\widehat{G}(X^{n,\e}(s-),z),X^{n,\e}(s-)\big)_{\mathbb{W}}\widetilde{N}^{\e^{-1}\varphi_\e}(dzds)\nonumber\\
& &
+\e^2\int_0^t\int_{\mathbb{Z}}\|\widehat{G}(X^{n,\e}(s-),z)\|_{\mathbb{W}}^2N^{\e^{-1}\varphi_\e}(dzds)¡£
\end{eqnarray}
We have
\begin{eqnarray}\label{MDP-1-estimate-eq-6}
& &\frac{2\kappa}{\alpha}\int_0^t
\Big(curl\big(X^{n,\e}(s)\big),curl\big(X^{n,\e}(s)-\alpha\Delta X^{n,\e}(s)\big)\Big)ds+2\int_0^t\big(\widehat{F}(X^{n,\e}(s),s),X^{n,\e}(s)\big)_{\mathbb{W}}ds\nonumber\\
&\leq&
C \int_0^{t}\|X^{n,\e}(s)\|_\mathbb{W}\|X^{n,\e}(s)\|_\mathbb{V}ds
+C \int_0^{t}\|X^{n,\e}(s)\|_\mathbb{W}\|\widehat{F}(X^{n,\e}(s),s)\|_\mathbb{W}ds\nonumber\\
&\leq&
C\int_0^t \|X^{n,\e}(s)\|_\mathbb{W}^2ds.
\end{eqnarray}
Set $\psi_\e(s,z)=\big(\varphi_\e(s,z)-1\big)/a(\e)\in{\mathcal{U}}_{\e}^M$. Then
\begin{eqnarray}\label{MDP-1-estimate-eq-7}
& &\Big|2\int_0^{t}\big(\int_{\mathbb{Z}}
\widehat{G}(X^{n,\e}(s),z)(\varphi_\e(s,z)-1)\nu(dz),X^{n,\e}(s)\big)_{\mathbb{W}}ds\Big|\nonumber\\
&\leq&
2a(\e)\int_0^t\|X^{n,\e}(s)\|_{\mathbb{W}}\int_{\mathbb{Z}}\|\widehat{G}(X^{n,\e}(s),z)\|_{\mathbb{W}}|\psi_\e(s,z)|\nu(dz)ds\nonumber\\
&\leq&
2a(\e)\int_0^t\|X^{n,\e}(s)\|_{\mathbb{W}}\big(1+\|X^{n,\e}(s)\|_{\mathbb{W}}\big)\int_{\mathbb{Z}}M_G(z)|\psi_\e(s,z)|\nu(dz)ds\nonumber\\
&\leq&
4a(\e)\int_0^t\big(1+\|X^{n,\e}(s)\|_{\mathbb{W}}^2\big)\int_{\mathbb{Z}}M_G(z)|\psi_\e(s,z)|\nu(dz)ds.
\end{eqnarray}
Combining (\ref{MDP-1-estimate-eq-5})-(\ref{MDP-1-estimate-eq-7}), we have
\begin{eqnarray}\label{MDP-1-estimate-eq-8}
& &\|X^{n,\e}(t)\|_\mathbb{W}^2
+\frac{2\kappa}{\alpha}\int_0^t\|X^{n,\e}(s)\|_\mathbb{W}^2ds\nonumber\\
&\leq&
\|X_0\|_\mathbb{W}^2+\sup_{l\in[0,T]}\Big|2\e\int_0^l\int_{\mathbb{Z}}\big(
    \widehat{G}(X^{n,\e}(s-),z),X^{n,\e}(s-)\big)_{\mathbb{W}}\widetilde{N}^{\e^{-1}\varphi_\e}(dzds)\Big|\nonumber\\
& &
+\e^2\int_0^T\int_{\mathbb{Z}}\|\widehat{G}(X^{n,\e}(s-),z)\|_{\mathbb{W}}^2N^{\e^{-1}\varphi_\e}(dzds)
+4a(\e)\int_0^T\int_{\mathbb{Z}}M_G(z)|\psi_\e(s,z)|\nu(dz)ds\nonumber\\
& &
+\int_0^t \|X^{n,\e}(s)\|_\mathbb{W}^2\big(C+4a(\e)\int_{\mathbb{Z}}M_G(z)|\psi_\e(s,z)|\nu(dz)\big)ds\nonumber\\
&=&
I_1+I_2+I_3+I_4+I_5(t).
\end{eqnarray}
Applying Gronwall's inequality and using Lemma \ref{H-lemma-2}, we get
\begin{eqnarray}\label{MDP-1-estimate-eq-9}
& &\|X^{n,\e}(t)\|_\mathbb{W}^2
+\frac{2\kappa}{\alpha}\int_0^t\|X^{n,\e}(s)\|_\mathbb{W}^2ds\nonumber\\
&\leq&
\big(I_1+I_2+I_3+I_4\big)\exp\big\{CT+4a(\e)\big(\rho_{M_G}(\beta)\sqrt{T}+\Gamma_{M_G}(\beta)a(\e)\big)\big\}.
\end{eqnarray}
By Lemma \ref{H-lemma-2} again, we get
\begin{eqnarray}\label{MDP-1-estimate-eq-10}
I_1+I_4
\leq
C+4a(\e)\big(\rho_{M_G}(\beta)\sqrt{T}+\Gamma_{M_G}(\beta)a(\e)\big).
\end{eqnarray}
\noindent By B-D-G and Young's inequalities, (\ref{G-02}) and Lemma \ref{H-lemma-1}, we get
\begin{eqnarray}\label{MDP-1-estimate-eq-11}
EI_2
&\leq&
2\e E\Big[\Big(\int_0^{T}
\int_{\mathbb{Z}}\big(\widehat{G}(s,X^{n,\e}(s-),z),
X^{n,\e}(s-)\big)_{\mathbb{W}}^2N^{\e^{-1}\varphi_\e}(dzds)\Big)^\frac{1}{2}\Big]\nonumber\\
&\leq&
2\e E\Big[\Big(\sup_{s\in[0,T]}\|X^{n,\e}(s)\|_\mathbb{W}^2\Big)^\frac{1}{2}
\Big(\int_0^T\int_{\mathbb{Z}}\|\widehat{G}(s,X^{n,\e}(s-),z)\|_{\mathbb{W}}^2N^{\e^{-1}\varphi_\e}(dzds)\Big)^\frac{1}{2}\Big]\nonumber\\
&\leq&
\frac{1}{4}E\Big[\sup_{t\in[0,T]}\|X^{n,\e}(t)\|_\mathbb{W}^2\Big]
+16{\e}^{2}E\Big[\Big(\int_0^T\int_{\mathbb{Z}}\|\widehat{G}(s,X^{n,\e}(s-),z)\|_{\mathbb{W}}^2N^{\e^{-1}\varphi_\e}(dzds)\Big)\Big]\nonumber\\
&\leq&
\frac{1}{4}E\Big[\sup_{t\in[0,T]}\|X^{n,\e}(t)\|_\mathbb{W}^2\Big]
+C\e\varsigma_{M_G}\big(a^2(\e)+T\big)\Big(E\Big[\sup_{t\in[0,T]}\|X^{n,\e}(t)\|_\mathbb{W}^2\Big]+1\Big).
\end{eqnarray}
Similar to (\ref{MDP-1-estimate-eq-11}), we get
\begin{eqnarray}\label{MDP-1-estimate-eq-12}
EI_4
&=&
\e\int_0^T\int_{\mathbb{Z}}\|\widehat{G}(X^{n,\e}(s-),z)\|_{\mathbb{W}}^2\varphi_\e(s,z)\nu(dz)ds\nonumber\\
&\leq&
C\e\varsigma_{M_G}\big(a^2(\e)+T\big)\Big(E\Big[\sup_{t\in[0,T]}\|X^{n,\e}(t)\|_\mathbb{W}^2\Big]+1\Big).
\end{eqnarray}
Choosing $\e_0>0$ small enough such that $C\e_0\varsigma_{M_G}\big(a^2(\e_0)+T\big)\leq\frac{1}{8}$, and combining (\ref{MDP-1-estimate-eq-9})-(\ref{MDP-1-estimate-eq-12}), we obtain (\ref{MDP-2-estimate-2}). The proof is complete.

\end{proof}

Recall $u_0$ in (\ref{Deterministic}). We have
\begin{thm}\label{MDP-2-theorem-1}
\begin{eqnarray}\label{MDP-2-theorem-1-estimate}
\lim_{\e\rightarrow0}E\Big[\sup_{t\in[0,T]}\|X^{\e}(t)-u^0(t)\|_\mathbb{V}^2\Big]=0.
\end{eqnarray}
\end{thm}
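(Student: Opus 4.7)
The plan is to set $V^\e := X^\e - u^0$ and $\psi^\e := (\varphi^\e-1)/a(\e)$, subtract (\ref{Deterministic}) from (\ref{MDP-1-estimate-eq-1}) to obtain
\begin{align*}
dV^\e(t) &= -\kappa\widehat{A}V^\e(t)\,dt - \bigl[\widehat{B}(V^\e(t), X^\e(t)) + \widehat{B}(u^0(t), V^\e(t))\bigr]\,dt + \bigl[\widehat{F}(X^\e(t),t) - \widehat{F}(u^0(t),t)\bigr]\,dt \\
&\quad + a(\e)\int_\mathbb{Z}\widehat{G}(X^\e(t),z)\psi^\e(t,z)\,\nu(dz)\,dt + \e\int_\mathbb{Z}\widehat{G}(X^\e(t-),z)\,\widetilde{N}^{\e^{-1}\varphi^\e}(dzdt),
\end{align*}
with $V^\e(0)=0$, then apply It\^o's formula to $\|V^\e(t)\|_\mathbb{V}^2$ and close by a Gronwall-type argument.

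For the deterministic drift, the viscous contribution $-2\kappa(\widehat{A}V^\e,V^\e)_\mathbb{V}\le 0$ is discarded. By the orthogonality (\ref{Eq B-03}), $(\widehat{B}(u^0,V^\e),V^\e)_\mathbb{V}=0$, while substituting $X^\e=V^\e+u^0$ and using (\ref{Eq B-03})--(\ref{Eq B-04}) gives $(\widehat{B}(V^\e,X^\e),V^\e)_\mathbb{V} = -\langle\widehat{B}(V^\e,V^\e),u^0\rangle$; by (\ref{Eq B-02}) and Lemma \ref{Regularity}, this quantity is bounded by $C\|V^\e\|_\mathbb{V}^2\|u^0\|_\mathbb{W}\le C\|V^\e\|_\mathbb{V}^2$. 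The $\widehat{F}$ contribution is at most $C\|V^\e\|_\mathbb{V}^2$ by condition (F1).

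For the noise terms, Cauchy--Schwarz, (G), and Young's inequality give, for the controlled drift,
\begin{align*}
2a(\e)\Bigl|\int_0^t\!\int_\mathbb{Z}(\widehat{G}(X^\e,z),V^\e)_\mathbb{V}\psi^\e\,\nu(dz)\,ds\Bigr| \le \int_0^t r^\e(s)\|V^\e(s)\|_\mathbb{V}^2\,ds + \int_0^t(1+\|X^\e(s)\|_\mathbb{V}^2)\,r^\e(s)\,ds,
\end{align*}
where $r^\e(s):=a(\e)\int_\mathbb{Z}M_G(z)|\psi^\e(s,z)|\,\nu(dz)$. For the stochastic integral, BDG and Young produce
\begin{align*}
E\sup_{t\le T}\Bigl|2\e\int_0^t\!\int_\mathbb{Z}(\widehat{G}(X^\e(s-),z),V^\e(s-))_\mathbb{V}\widetilde{N}^{\e^{-1}\varphi^\e}(dzds)\Bigr| \le \tfrac{1}{4} E\sup_{t\le T}\|V^\e(t)\|_\mathbb{V}^2 + C\e^2 E\!\int_0^T\!\int_\mathbb{Z}\|\widehat{G}(X^\e,z)\|_\mathbb{V}^2\varphi^\e\,\nu(dz)\,ds,
\end{align*}
and the It\^o jump-correction contributes $\e\, E\!\int_0^T\!\int_\mathbb{Z}\|\widehat{G}(X^\e,z)\|_\mathbb{V}^2\varphi^\e\,\nu(dz)\,ds$. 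By (G), Lemma \ref{H-lemma-1}, and the uniform-in-$\e$ bound $E\sup_t\|X^\e(t)\|_\mathbb{W}^2\le C$ from Lemma \ref{MDP-2-prp-1}, both are $O(\e)$; by Lemma \ref{H-lemma-2}, $\int_0^T r^\e(s)\,ds=O(a(\e))$, so the second term in the Young bound above is $o_\e(1)$.

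Assembling these estimates, taking the supremum over $t$ and expectation yields
$$E\sup_{s\le t}\|V^\e(s)\|_\mathbb{V}^2 \le o_\e(1) + C\int_0^t (1+r^\e(s))\,E\sup_{r\le s}\|V^\e(r)\|_\mathbb{V}^2\,ds,$$
and Gronwall's inequality with integrable coefficient $1+r^\e(s)$ (whose time integral is uniformly bounded in $\e$) concludes the proof. The main difficulty lies in the controlled drift: the weight $\int_\mathbb{Z}M_G|\psi^\e|\,\nu(dz)$ is not bounded in $s$ but only in its time integral, so one must feed it into Gronwall as a variable coefficient rather than factor it out of the time integral. The quadratic growth of $G$ in hypothesis (G) then makes the uniform $\mathbb{W}$-bound from Lemma \ref{MDP-2-prp-1} essential in rendering the jump contributions $O(\e)$.
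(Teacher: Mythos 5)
Your route is essentially the paper's: an energy estimate in $\mathbb{V}$ for $V^\e=X^\e-u^0$, cancellation of the nonlinearity via (\ref{Eq B-03})--(\ref{Eq B-04}) and (\ref{Eq B-02}) together with Lemma \ref{Regularity} (your splitting of $\widehat{B}(X^\e,X^\e)-\widehat{B}(u^0,u^0)$ differs from the paper's, but after the orthogonality identities both reduce to the same term $-\langle\widehat{B}(V^\e,V^\e),u^0\rangle$), the controlled drift handled through \textbf{(G)} and Lemma \ref{H-lemma-2}, and the martingale and jump-correction terms through the B-D-G inequality, Lemma \ref{H-lemma-1} and the uniform bound of Lemma \ref{MDP-2-prp-1}.

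There is, however, one step that does not go through as written. In your final displayed inequality $E\sup_{s\le t}\|V^\e(s)\|_\mathbb{V}^2 \le o_\e(1)+C\int_0^t(1+r^\e(s))\,E\sup_{r\le s}\|V^\e(r)\|_\mathbb{V}^2\,ds$, the coefficient $r^\e(s)=a(\e)\int_\mathbb{Z}M_G(z)|\psi^\e(s,z)|\nu(dz)$ is a random process (the controls $\varphi^\e\in\mathcal{U}^M_{+,\e}$ are predictable random fields), so the right-hand side is a random variable while the left-hand side is deterministic; in particular the passage from $E\int_0^t r^\e(s)\|V^\e(s)\|_\mathbb{V}^2\,ds$ to $\int_0^t r^\e(s)\,E\sup_{r\le s}\|V^\e(r)\|_\mathbb{V}^2\,ds$ is not legitimate, since the random weight cannot be pulled out of the expectation. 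The repair is to reverse the order, which is exactly what the paper does: apply Gronwall pathwise, using that $\int_0^T r^\e(s)\,ds\le C a(\e)$ almost surely by Lemma \ref{H-lemma-2} (because $\psi^\e(\cdot,\cdot,\omega)\in\mathcal{S}^M_\e$ a.s.), to obtain $\sup_{t\le T}\|V^\e(t)\|_\mathbb{V}^2\le C\big(\text{forcing terms}\big)$ with a deterministic constant, and only then take expectations and apply B-D-G/Young to the martingale forcing, choosing the Young constant small relative to that Gronwall constant so that the $E\sup_{t}\|V^\e(t)\|_\mathbb{V}^2$ term can be absorbed. Two minor bookkeeping points: the stochastic-integral bound should be $C\e\,E\int_0^T\int_\mathbb{Z}\|\widehat{G}(X^\e,z)\|_\mathbb{V}^2\varphi^\e\,\nu(dz)ds$ rather than $C\e^2E(\cdots)$, since $E\int\cdots N^{\e^{-1}\varphi^\e}(dzds)=\e^{-1}E\int\cdots\varphi^\e\,\nu(dz)ds$ (your stated conclusion $O(\e)$ matches the correct computation); and the $\tfrac14 E\sup\|V^\e\|_\mathbb{V}^2$ absorption must be performed after the Gronwall constant is fixed, not before.
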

\begin{proof}
Set $Z^\e(t)=X^\e(t)-u^0(t)$. Then
\begin{eqnarray}\label{MDP-2-theorem-1-eq-1}
dZ^\e(t)
&=&
-\kappa\widehat{A}Z^{\e}(t)dt-\widehat{B}(X^{\e}(t),Z^{\e}(t))dt-\widehat{B}(Z^{\e}(t),u^{0}(t))dt
+\big(\widehat{F}(X^{\e}(t),t)-\widehat{F}(u^{0}(t),t)\big)dt\nonumber\\
& &+\e\int_{\mathbb{Z}}\widehat{G}(X^{\e}(t-),z)\widetilde{N}^{\e^{-1}\varphi^\e}(dzdt)
+\int_{\mathbb{Z}}\widehat{G}(X^{\e}(t),z)\big(\varphi^\e(t,z)-1\big)\nu(dz)dt,
\end{eqnarray}
with initial value $Z^\e(0)=0$.
Applying It$\hat{o}$'s formula and (\ref{Eq B-03}), we get
\begin{eqnarray}\label{MDP-2-theorem-1-eq-2}
& &d\|Z^\e(t)\|_{\mathbb{V}}^2+2\kappa\|Z^\e(t)\|^2dt\nonumber\\
&=&
-2\langle\widehat{B}(Z^{\e}(t),u^{0}(t)),Z^\e(t)\rangle_{\mathbb{W}^*,\mathbb{W}}dt
+2\big(\widehat{F}(X^{\e}(t),t)-\widehat{F}(u^{0}(t),t),Z^\e(t)\big)_{\mathbb{V}}dt\nonumber\\
& &
+2\e\int_{\mathbb{Z}}\big(\widehat{G}(X^{\e}(t-),z),Z^\e(t-)\big)_{\mathbb{V}}\widetilde{N}^{\e^{-1}\varphi^\e}(dzdt)
+2\int_{\mathbb{Z}}\big(\widehat{G}(X^{\e}(t),z)\big(\varphi^\e(t,z)-1\big),Z^\e(t)\big)_{\mathbb{V}}\nu(dz)dt\nonumber\\
& &
+\e^2\int_{\mathbb{Z}}\|\widehat{G}(X^{\e}(t-),z)\|_{\mathbb{V}}^2{N}^{\e^{-1}\varphi^\e}(dzdt).
\end{eqnarray}
By Lemma \ref{Lem-B-01} and Condition {\bf (F1)}, we get
\begin{eqnarray}\label{MDP-2-theorem-1-eq-3}
\int_0^t2|\langle\widehat{B}(Z^{\e}(s),u^{0}(s)),Z^\e(s)\rangle_{\mathbb{W}^*,\mathbb{W}}|ds,
\leq
C\sup_{l\in[0,T]}\|u^0(l)\|_{\mathbb{W}}\int_0^t\|Z^\e(s)\|_{\mathbb{V}}^2ds,
\end{eqnarray}
and
\begin{eqnarray}\label{MDP-2-theorem-1-eq-4}
2\int_0^t|\big(\widehat{F}(X^{\e}(s),s)-\widehat{F}(u^{0}(s),s),Z^\e(s)\big)_{\mathbb{V}}|ds
\leq
C\int_0^t\|Z^\e(s)\|_{\mathbb{V}}^2ds.
\end{eqnarray}
Set $\psi_\e(s,z)=\big(\varphi_\e(s,z)-1\big)/a(\e)$. By Condition {\bf (G)},
\begin{eqnarray}\label{MDP-2-theorem-1-eq-5}
& &2\int_0^t|\int_{\mathbb{Z}}\big(\widehat{G}(X^{\e}(s),z)\big(\varphi^\e(s,z)-1\big),Z^\e(s)\big)_{\mathbb{V}}\nu(dz)ds\nonumber\\
&\leq&
2\int_0^t\|Z^\e(s)\|_{\mathbb{V}}\int_{\mathbb{Z}}
    \|\widehat{G}(X^{\e}(s),z)-\widehat{G}(u^0(s),z)\|_{\mathbb{V}}|\varphi^\e(s,z)-1|\nu(dz)ds\nonumber\\
& &+2\int_0^t\|Z^\e(s)\|_{\mathbb{V}}\int_{\mathbb{Z}}\|\widehat{G}(u^0(s),z)\|_{\mathbb{V}}|\varphi^\e(t,z)-1|\nu(dz)ds\nonumber\\
&\leq&
Ca(\e)\int_0^t\|Z^\e(s)\|_{\mathbb{V}}^2\int_{\mathbb{Z}}L_G(z)|\psi_\e(s,z)|\nu(dz)ds\nonumber\\
& &+Ca(\e)\int_0^t\big(1+\|Z^\e(s)\|_{\mathbb{V}}^2\big)\big(1+\|u^0(s)\|_{\mathbb{V}}\big)
\int_{\mathbb{Z}}M_G(z)|\psi_\e(s,z)|\nu(dz)ds\nonumber\\
&\leq&
Ca(\e)\int_0^t\|Z^\e(s)\|_{\mathbb{V}}^2
    \int_{\mathbb{Z}}\big(L_G(z)+(1+\sup_{l\in[0,T]}\|u^0(l)\|_{\mathbb{V}})M_G(z)\big)|\psi_\e(s,z)|\nu(dz)ds\nonumber\\
& &+Ca(\e)\big(1+\sup_{l\in[0,T]}\|u^0(l)\|_{\mathbb{V}}\big)\int_0^t\int_{\mathbb{Z}}M_G(z)|\psi_\e(s,z)|\nu(dz)ds.
\end{eqnarray}
Combining (\ref{MDP-2-theorem-1-eq-2})-(\ref{MDP-2-theorem-1-eq-5}), we get
\begin{eqnarray}\label{MDP-2-theorem-1-eq-6}
\|Z^\e(t)\|_{\mathbb{V}}^2+2\kappa\int_0^t\|Z^\e(s)\|^2ds
\leq
M_1(T)+M_2(T)+M_3(T)+\int_0^tJ(s)\|Z^\e(s)\|_{\mathbb{V}}^2ds,
\end{eqnarray}
here
\begin{eqnarray*}
M_1(T)=2\e\sup_{l\in[0,T]}\Big|\int_0^l
\int_{\mathbb{Z}}\big(\widehat{G}(X^{\e}(s-),z),Z^\e(s-)\big)_{\mathbb{V}}\widetilde{N}^{\e^{-1}\varphi^\e}(dzds)\Big|,
\end{eqnarray*}
\begin{eqnarray*}
M_2(T)=\e^2\int_0^T\int_{\mathbb{Z}}\|\widehat{G}(X^{\e}(t-),z)\|_{\mathbb{V}}^2{N}^{\e^{-1}\varphi^\e}(dzdt),
\end{eqnarray*}
\begin{eqnarray*}
M_3(T)=Ca(\e)\big(1+\sup_{l\in[0,T]}\|u^0(l)\|_{\mathbb{V}}\big)\int_0^T\int_{\mathbb{Z}}M_G(z)|\psi_\e(s,z)|\nu(dz)ds,
\end{eqnarray*}
and
\begin{eqnarray*}
J(s)
&=&
C\Big(\sup_{l\in[0,T]}\|u^0(l)\|_{\mathbb{W}}+1+a(\e)\int_{\mathbb{Z}}L_G(z)|\psi_\e(s,z)|\nu(dz)\nonumber\\
& &
+a(\e)(1+\sup_{l\in[0,T]}\|u^0(l)\|_{\mathbb{V}})\int_{\mathbb{Z}}M_G(z)|\psi_\e(s,z)|\nu(dz)\Big).
\end{eqnarray*}
By Gronwall's inequality and Lemma \ref{H-lemma-2} and Lemma \ref{Regularity},
\begin{eqnarray}\label{MDP-2-theorem-1-eq-7}
& &\|Z^\e(t)\|_{\mathbb{V}}^2+2\kappa\int_0^t\|Z^\e(s)\|^2ds\nonumber\\
&\leq&
\Big(M_1(T)+M_2(T)+M_3(T)\Big)\exp{\big(\int_0^TJ(s)ds\big)}\nonumber\\
&\leq&
C\Big(M_1(T)+M_2(T)+M_3(T)\Big).
\end{eqnarray}
By B-D-G inequality, Lemma \ref{H-lemma-1} and (\ref{MDP-2-estimate-1}), we get
\begin{eqnarray}\label{MDP-2-theorem-1-eq-8}
EM_1(T)
&\leq&
2\e E\Big[\Big(\int_0^{T}
\int_{\mathbb{Z}}\big(\widehat{G}(X^{\e}(s-),z),
Z^{\e}(s-)\big)_{\mathbb{V}}^2N^{\e^{-1}\varphi_\e}(dzds)\Big)^\frac{1}{2}\Big]\nonumber\\
&\leq&
2\e E\Big[\Big(\sup_{s\in[0,T]}\|Z^{\e}(s)\|_\mathbb{V}^2\Big)^\frac{1}{2}
\Big(\int_0^T\int_{\mathbb{Z}}\|\widehat{G}(X^{\e}(s-),z)\|_{\mathbb{V}}^2N^{\e^{-1}\varphi_\e}(dzds)\Big)^\frac{1}{2}\Big]\nonumber\\
&\leq&
\frac{1}{2}E\Big[\sup_{t\in[0,T]}\|Z^{\e}(t)\|_\mathbb{V}^2\Big]
+C{\e}E\Big[\Big(\int_0^T\int_{\mathbb{Z}}M^2_G(z)\big(1+\|X^\e(s)\|_{\mathbb{V}}^2\big)\varphi_\e(s,z)\nu(dz)ds\Big)\Big]\nonumber\\
&\leq&
\frac{1}{2}E\Big[\sup_{t\in[0,T]}\|Z^{\e}(t)\|_\mathbb{V}^2\Big]
+C\e\varsigma_{M_G}\big(a^2(\e)+T\big)\Big(E\Big[\sup_{t\in[0,T]}\|X^{\e}(t)\|_\mathbb{V}^2\Big]+1\Big)\nonumber\\
&\leq&
\frac{1}{2}E\Big[\sup_{t\in[0,T]}\|Z^{\e}(t)\|_\mathbb{V}^2\Big]+C\e\varsigma_{M_G}\big(a^2(\e)+T\big).
\end{eqnarray}
Similarly, we have
\begin{eqnarray}\label{MDP-2-theorem-1-eq-9}
EM_2(T)
&=&
\e E\Big[\int_0^T\int_{\mathbb{Z}}\|\widehat{G}(X^{\e}(t-),z)\|_{\mathbb{V}}^2\varphi^\e(t,z)\nu(dz)dt\nonumber\\
&\leq&
C\e E\Big[\Big(\int_0^T\int_{\mathbb{Z}}M^2_G(z)\big(1+\|X^\e(s)\|_{\mathbb{V}}^2\big)\varphi_\e(s,z)\nu(dz)ds\Big)\Big]\nonumber\\
&\leq&
C\e\varsigma_{M_G}\big(a^2(\e)+T\big).
\end{eqnarray}
By Lemma \ref{H-lemma-2} and Lemma \ref{Regularity},
\begin{eqnarray}\label{MDP-2-theorem-1-eq-10}
EM_3(T)
\leq
Ca(\e)\big(\rho_{M_G}(\beta)\sqrt{T}+\Gamma_{M_G}(\beta)a(\e)\big).
\end{eqnarray}
Combining (\ref{MDP-2-theorem-1-eq-7})-(\ref{MDP-2-theorem-1-eq-10}), we obtain (\ref{MDP-2-theorem-1-estimate}).\\
\indent The proof is complete.

\end{proof}

Notice
\begin{eqnarray}
\mathcal{G}^{\e}(\e N^{\e^{-1}\varphi^\e}):=Y^\e=\frac{X^\e-u^0}{a(\e)}.
\end{eqnarray}
and $Y^\e(t)$ satisfies
\begin{eqnarray}\label{4.1}
dY^\e(t)
&=&
-\kappa\widehat{A}Y^\e(t)dt-
\Big(\widehat{B}(X^{\e}(t),Y^\e(t))+\widehat{B}(Y^\e(t),u^0(t))\Big)dt
+\frac{1}{a(\e)}\Big(\widehat{F}(X^{\e}(t),t)-\widehat{F}(u^0(t),t)\Big)dt\nonumber\\
& &\qquad+\frac{\e}{a(\e)}\int_{\mathbb{Z}}\widehat{G}(X^{\e}(t-),z)\widetilde{N}^{\e^{-1}\varphi^\e}(dzdt)
+\int_{\mathbb{Z}}\widehat{G}(X^{\e}(t),z)\big(\frac{\varphi^\e(t,z)-1}{a(\e)}\big)\nu(dz)dt,
\end{eqnarray}
with initial value $Y^\e(t)=0$.

\begin{prp}\label{MDP-2}
Given $M<\infty$, Let $\{\varphi^\e\}_{\e>0}$ be such that $\varphi^\e\in\mathcal{U}_{+,\e}^M$ for every $\e>0$. Let $\psi^\e=(\varphi^\e-1)/a(\e)$ and $\beta\in(0,1]$. Then the family $\{Y^\e,\psi^\e 1_{\{|\psi^\e|\leq\beta/a(\e)\}}\}_{\e>0}$ is tight in $D([0,T],\mathbb{V})\times B_2\big(\sqrt{M\kappa_2(1)}\big)$, and any limit point $(Y,\psi)$ solves the equation (\ref{Theorem-1-eq-1}).
\end{prp}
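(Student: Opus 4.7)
The plan is to implement the weak-convergence scheme of \cite{BDG}. Decompose $Y^\e = M^\e + R^\e$, where
\[
M^\e(t):=\frac{\e}{a(\e)}\int_0^t\int_{\mathbb{Z}}\widehat{G}(X^\e(s-),z)\widetilde{N}^{\e^{-1}\varphi^\e}(dz\,ds)
\]
is the rescaled compensated Poisson integral and $R^\e$ collects the remaining predictable drifts of equation (\ref{4.1}). By the BDG inequality, Condition \textbf{(G)}, Lemma \ref{H-lemma-1}, and the moment bound of Lemma \ref{MDP-2-prp-1},
\[
E\sup_{t\leq T}\|M^\e(t)\|_{\mathbb{V}}^2 \leq C\,\frac{\e}{a^2(\e)}\,(a^2(\e)+T)\,\big(1+\sup_{\e>0}E\|X^\e\|_{\mathbb{W}}^2\big),
\]
which tends to $0$ by the MDP scaling (\ref{condition}), so $M^\e\to 0$ in $L^2(\Omega;D([0,T],\mathbb{V}))$ and can be neglected for both tightness and limit identification.

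For the remainder, I apply It\^o's formula to $\|Y^\e\|_{\mathbb{V}}^2$ at the Galerkin level. Using the cancellation $\langle\widehat{B}(X^\e,Y^\e),Y^\e\rangle = 0$ from (\ref{Eq B-03}), the $\mathbb{W}^*$-bound of Lemma \ref{Lem-B-01} on $\widehat{B}(Y^\e,u^0)$ together with Lemma \ref{Regularity}, Condition \textbf{(F2)} applied to the finite-difference term $a(\e)^{-1}(\widehat{F}(X^\e,t)-\widehat{F}(u^0,t))$, and the estimates of Lemmas \ref{H-lemma-1}--\ref{H-lemma-2} on the $\psi^\e$- and $\varphi^\e$-integrals, a Gronwall argument delivers
\[
E\Big[\sup_{t\in[0,T]}\|Y^\e(t)\|_{\mathbb{V}}^2 + \int_0^T\|Y^\e(s)\|^2\,ds\Big]\leq C.
\]
Estimating each drift of the integral equation for $R^\e$ in $\mathbb{W}^*$ (as in the proof of Proposition \ref{MDP-1}) further yields uniform control of $E\|R^\e\|_{W^{\alpha,2}([0,T],\mathbb{W}^*)}^2$ for some $\alpha\in(0,1/2)$. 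Lemma \ref{Compact}, combined with the path-continuity of $R^\e$ and the vanishing of $M^\e$, gives tightness of the laws of $Y^\e$ in $D([0,T],\mathbb{V})$. Tightness of $\bar\psi^\e := \psi^\e 1_{\{|\psi^\e|\leq\beta/a(\e)\}}$ in $B_2(\sqrt{M\kappa_2(1)})$ is automatic from weak compactness of bounded balls in $L^2(\nu_T)$.

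By Prokhorov and Skorohod, along a subsequence I can assume $(Y^{\e_k},\bar\psi^{\e_k})\to(Y,\psi)$ a.s.\ on a common probability space, with the first convergence in $L^2([0,T],\mathbb{V})$ and the second weakly in $L^2(\nu_T)$. Theorem \ref{MDP-2-theorem-1} gives $X^{\e_k}\to u^0$ in $L^2(\Omega;C([0,T],\mathbb{V}))$. The Stokes term and the bilinear terms $\widehat{B}(X^\e,Y^\e)$, $\widehat{B}(Y^\e,u^0)$ pass to the limit by Lemma \ref{Lem-B-01} and strong $\mathbb{V}$-convergence. A first-order Taylor expansion under Condition \textbf{(F2)} writes
\[
\frac{\widehat{F}(X^\e,t)-\widehat{F}(u^0,t)}{a(\e)} = \widehat{F}'(u^0,t)Y^\e + r^\e(t),\qquad \|r^\e(t)\|_{\mathbb{V}}\leq C\,a(\e)\|Y^\e(t)\|_{\mathbb{V}}^2,
\]
so the leading term converges to $\widehat{F}'(u^0,t)Y$ in $L^2([0,T],\mathbb{V})$ and the remainder vanishes by the uniform $L^2$-bound on $Y^\e$ together with $a(\e)\to 0$.

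The main obstacle is identifying the limit of the controlled Poisson drift $\int_0^{\cdot}\int_{\mathbb{Z}}\widehat{G}(X^\e,z)\psi^\e\,\nu(dz)\,ds$. Decompose $\psi^\e = \bar\psi^\e+(\psi^\e-\bar\psi^\e)$; the tail piece is pointwise dominated by $\int_{\mathbb{Z}}M_G(z)(1+\|X^\e\|_{\mathbb{V}})|\psi^\e|1_{\{|\psi^\e|>\beta/a(\e)\}}\,\nu(dz)$ and its time-integral vanishes in $L^1(\Omega)$ via Lemma \ref{H-lemma-3} and the moment bound of Lemma \ref{MDP-2-prp-1}. The main piece further splits as
\[
\int_{\mathbb{Z}}\bigl(\widehat{G}(X^\e,z)-\widehat{G}(u^0,z)\bigr)\bar\psi^\e\,\nu(dz)+\int_{\mathbb{Z}}\widehat{G}(u^0,z)\bar\psi^\e\,\nu(dz);
\]
by \textbf{(G)} and the uniform $L^2(\nu_T)$-bound on $\bar\psi^\e$ the first integral is controlled by $\|X^\e-u^0\|_{\mathbb{V}}$ times a uniformly $L^2$-bounded factor, hence vanishes by Theorem \ref{MDP-2-theorem-1}, while the second converges to $\int_{\mathbb{Z}}\widehat{G}(u^0,z)\psi\,\nu(dz)$ by the weak convergence of $\bar\psi^\e$, since $(s,z)\mapsto \widehat{G}(u^0(s),z)\in L^2(\nu_T;\mathbb{V})$ thanks to \textbf{(G)} and Lemma \ref{Regularity}. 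Combined, these identifications show that $(Y,\psi)$ solves (\ref{Theorem-1-eq-1}), which finishes the proof.
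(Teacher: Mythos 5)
Your overall scheme---peel off the compensated Poisson integral, show it is negligible at the MDP scale, then run energy estimates and a compactness argument on the remainder---is reasonable in spirit, and several ingredients are sound (the BDG bound on $M^\e$, the splitting of the controlled drift into the tail part handled by Lemma \ref{H-lemma-3} and the parts handled by Condition {\bf (G)}, Theorem \ref{MDP-2-theorem-1} and weak convergence of $\bar\psi^\e$, and the Taylor expansion of $\widehat F$ under {\bf (F2)}). The genuine gap is the tightness step in $D([0,T],\mathbb{V})$. Lemma \ref{Compact} needs a uniform bound in $L^p([0,T];\mathbb{K}_0)$ for some $\mathbb{K}_0$ compactly embedded in $\mathbb{V}$, i.e.\ essentially a $\mathbb{W}$- (or at least $\mathbb{H}^2$-) level bound on $Y^\e$ or $R^\e$. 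Your It\^o computation only yields $E\sup_t\|Y^\e(t)\|^2_{\mathbb{V}}+E\int_0^T\|Y^\e(s)\|^2ds\le C$, and both $\|\cdot\|_{\mathbb{V}}$ and $\|\cdot\|$ are equivalent to the $H^1$-norm, which is not compactly embedded in $\mathbb{V}$; so Lemma \ref{Compact} cannot be invoked with what you have. A direct $\mathbb{W}$-estimate on $Y^\e$ is not available either: the cancellation $(\widehat B(u,v),u)_{\mathbb{W}}=0$ and the bound on $(\widehat B(\cdot,\eta),\eta)_{\mathbb{W}}$ used in the paper (cf.\ (\ref{MDP-1-estimate-1-eq-4})) require $\mathbb{H}^4(\mathcal{O})$-regularity of the first argument, which $X^\e$ does not possess, so the term $\widehat B(X^\e,Y^\e)$ blocks the $\mathbb{W}$-level energy estimate. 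Finally, even granting an Aubin--Lions bound, Lemma \ref{Compact} only gives compactness in $L^2([0,T];\mathbb{V})$; tightness in the Skorokhod space $D([0,T],\mathbb{V})$ additionally needs a compact-containment condition (tightness of the laws of $Y^\e(t)$ in $\mathbb{V}$), and ``path-continuity of $R^\e$'' plus the vanishing of $M^\e$ does not supply it---again one needs a bound in a space compactly embedded in $\mathbb{V}$. Everything downstream (Prokhorov/Skorokhod and the limit identification) presupposes this missing tightness.

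This is precisely the obstruction the paper's proof is built to avoid. There, $Y^\e$ is decomposed as $Z^\e+L^\e+U^\e+\Upsilon^\e$, where $Z^\e$ (the noise term), $L^\e$ (the large-$|\psi^\e|$ tail) and $U^\e$ (the Lipschitz difference $\widehat G(X^\e,\cdot)-\widehat G(u^0,\cdot)$) solve \emph{linear} Stokes-type equations containing no $\widehat B$-term, so their $\mathbb{W}$-energy estimates close and each is shown to vanish in $E\sup_t\|\cdot\|^2_{\mathbb{W}}$; tightness of $(Z^\e,L^\e+U^\e,\psi^\e 1_{\{|\psi^\e|\le\beta/a(\e)\}})$ is then immediate. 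The residual $\Upsilon^\e$ in (\ref{MDP-2-eq-4-1}) satisfies a pathwise PDE whose bilinear coefficients involve only $u^0\in\mathbb{H}^4(\mathcal{O})$ (and an $a(\e)$-weighted quadratic term), so after Skorokhod representation one gets a pathwise $\mathbb{W}$-bound as in (\ref{MDP-1-estimate-1}) and a Gronwall argument in $\mathbb{V}$ giving a.s.\ convergence of $\widetilde Y^\e$ to $\mathcal{G}^0(\widetilde\psi)$ in the sup-$\mathbb{V}$ norm, which delivers both the claimed tightness in $D([0,T],\mathbb{V})$ and the identification of the limit as a solution of (\ref{Theorem-1-eq-1}). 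To repair your argument you would either have to reproduce such a decomposition or supply a uniform estimate in a space compactly embedded in $\mathbb{V}$ together with an Aldous-type criterion; as written, the tightness claim does not follow.
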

\begin{proof}
The proof is divided into several steps.\\
\indent \textbf{Step 1.} Let $Z^\e$ be the solution of the following equation
\begin{eqnarray}\label{MDP-2-eq-1-1}
dZ^\e(t)=-\kappa\widehat{A}Z^\e(t)dt+\frac{\e}{a(\e)}\int_{\mathbb{Z}}\widehat{G}(X^\e(t-),z)\widetilde{N}^{\e^{-1}\varphi^\e}(dzdt),
\end{eqnarray}
with initial value $Z^\e(0)=0$.
Then, by (\ref{Basis})
\begin{eqnarray}\label{MDP-2-eq-1-2}
d\big(Z^\e(t),e_i\big)_{\mathbb{W}}=-\kappa\big(\widehat{A}Z^\e(t),e_i\big)_{\mathbb{W}}dt
+\frac{\e}{a(\e)}\int_{\mathbb{Z}}\big(\widehat{G}(X^\e(t-),z),e_i\big)_{\mathbb{W}}\widetilde{N}^{\e^{-1}\varphi^\e}(dzdt),
\end{eqnarray}
for $i\in\mathbb{N}$.\\
Applying It$\hat{o}$'s formula to $\big(Z^\e(t),e_i\big)_{\mathbb{W}}^2$ and summing over $i$ from 1 to $\infty$ yields
\begin{eqnarray}\label{MDP-2-eq-1-3}
& &\|Z^\e(t)\|_{\mathbb{W}}^2+\frac{2\kappa}{\alpha}\int_0^t\|Z^\e(s)\|_{\mathbb{W}}^2ds\nonumber\\
&=&
\frac{2\kappa}{\alpha}\int_0^t\Big(curl\big(Z^\e(s)\big),curl\big(Z^\e(s)-\alpha\Delta Z^\e(s)\big)\Big)ds
+\frac{2\e}{a(\e)}\int_0^t\int_{\mathbb{Z}}\big(\widehat{G}(X^\e(s-),z),Z^\e(s-)\big)_{\mathbb{W}}\widetilde{N}^{\e^{-1}\varphi^\e}(dzds)\nonumber\\
& &
+\frac{\e^2}{a^2(\e)}\int_0^t\int_{\mathbb{Z}}\|\widehat{G}(X^\e(s-),z)\|_{\mathbb{W}}N^{\e^{-1}\varphi^\e}(dzds).
\end{eqnarray}
We have
\begin{eqnarray}\label{MDP-2-eq-1-4}
\frac{2\kappa}{\alpha}\int_0^t
\Big(curl\big(Z^\e(s)\big),curl\big(Z^\e(s)-\alpha\Delta Z^\e(s)\big)\Big)ds
\leq
C\int_0^t \|Z^\e(s)\|_\mathbb{W}^2ds.
\end{eqnarray}
By B-D-G inequality, Lemma \ref{H-lemma-1} and (\ref{MDP-2-estimate-1}),
\begin{eqnarray}\label{MDP-2-eq-1-5}
& &E\Big[\sup_{t\in[0,T]}\Big|\frac{2\e}{a(\e)}\int_0^t\int_{\mathbb{Z}}
\big(\widehat{G}(X^\e(s-),z),Z^\e(s-)\big)_{\mathbb{W}}\widetilde{N}^{\e^{-1}\varphi^\e}(dzds)\Big|\Big]\nonumber\\
&\leq&
\frac{2\e}{a(\e)}E\Big[\Big(\int_0^{T}
\int_{\mathbb{Z}}\big(\widehat{G}(s,X^{\e}(s-),z),
Z^{\e}(s-)\big)_{\mathbb{W}}^2N^{\e^{-1}\varphi_\e}(dzds)\Big)^\frac{1}{2}\Big]\nonumber\\
&\leq&
\frac{2\e}{a(\e)}E\Big[\Big(\sup_{s\in[0,T]}\|Z^{\e}(s)\|_\mathbb{W}^2\Big)^\frac{1}{2}
\Big(\int_0^T\int_{\mathbb{Z}}\|\widehat{G}(X^{\e}(s-),z)\|_{\mathbb{W}}^2N^{\e^{-1}\varphi_\e}(dzds)\Big)^\frac{1}{2}\Big]\nonumber\\
&\leq&
\frac{1}{2}E\Big[\sup_{t\in[0,T]}\|Z^{\e}(t)\|_\mathbb{W}^2\Big]
+\frac{C\e}{a^2(\e)}E\Big[\Big(\int_0^T\int_{\mathbb{Z}}M^2_G(z)\big(1+\|X^\e(s)\|_{\mathbb{V}}^2\big)\varphi_\e(s,z)\nu(dz)ds\Big)\Big]\nonumber\\
&\leq&
\frac{1}{2}E\Big[\sup_{t\in[0,T]}\|Z^{\e}(t)\|_\mathbb{W}^2\Big]
+\frac{C\e}{a^2(\e)}\varsigma_{M_G}\big(a^2(\e)+T\big)\Big(E\Big[\sup_{t\in[0,T]}\|X^{\e}(t)\|_\mathbb{V}^2\Big]+1\Big)\nonumber\\
&\leq&
\frac{1}{2}E\Big[\sup_{t\in[0,T]}\|Z^{\e}(t)\|_\mathbb{W}^2\Big]+\frac{C\e}{a^2(\e)}\varsigma_{M_G}\big(a^2(\e)+T\big).
\end{eqnarray}
And similarly
\begin{eqnarray}\label{MDP-2-eq-1-6}
& &E\Big[\frac{\e^2}{a^2(\e)}\int_0^t\int_{\mathbb{Z}}\|\widehat{G}(X^\e(s-),z)\|^2_{\mathbb{W}}N^{\e^{-1}\varphi^\e}(dzds)\Big]\nonumber\\
&=&
\frac{\e}{a^2(\e)}E\Big[\int_0^T\int_{\mathbb{Z}}\|\widehat{G}(X^\e(s),z)\|^2_{\mathbb{W}}\varphi^\e(s,z)(dz)ds\Big]\nonumber\\
&\leq&
\frac{C\e}{a^2(\e)}\varsigma_{M_G}\big(a^2(\e)+T\big).
\end{eqnarray}
Combining (\ref{MDP-2-eq-1-3})-(\ref{MDP-2-eq-1-6}), (\ref{condition}) and applying Gronwall's inequality, we obtain
\begin{eqnarray}\label{MDP-2-eq-1-estimate}
\lim_{\e\rightarrow0}E\Big[\sup_{t\in[0,T]}\|Z^\e(t)\|_{\mathbb{W}}^2\Big]=0.
\end{eqnarray}

\indent \textbf{Step 2.} Recall $\psi^\e=(\varphi^\e-1)/a(\e)$. Let $L^\e(t)$ be the unique solution of
\begin{eqnarray}\label{MDP-2-eq-2-1}
dL^\e(t)=-\kappa\widehat{A}L^\e(t)dt+\int_{\mathbb{Z}}\widehat{G}(X^\e(t),z)\psi^\e(z,t)1_{\{|\psi^\e|>\beta/a(\e)\}}\nu(dz)dt,
\end{eqnarray}
with initial value $L^\e(0)=0$. Using similar arguments as getting (\ref{MDP-2-eq-1-3}), we have
\begin{eqnarray}\label{MDP-2-eq-2-3}
& &\|L^\e(t)\|_{\mathbb{W}}^2+\frac{2\kappa}{\alpha}\int_0^t\|L^\e(s)\|_{\mathbb{W}}^2ds\nonumber\\
&=&
\frac{2\kappa}{\alpha}\int_0^t\Big(curl\big(L^\e(s)\big),curl\big(L^\e(s)-\alpha\Delta L^\e(s)\big)\Big)ds\nonumber\\
& &\qquad
+2\int_0^t\int_{\mathbb{Z}}\big(\widehat{G}(X^\e(s),z)\psi^\e(z,s)1_{\{|\psi^\e|>\beta/a(\e)\}},L^\e(s)\big)_{\mathbb{W}}\nu(dz)ds\nonumber\\
&\leq&
C\int_0^t\|L^\e(s)\|_{\mathbb{W}}^2ds+2\int_0^T\int_{\mathbb{Z}}
\|\widehat{G}(X^\e(s),z)\|_{\mathbb{W}}|\psi^\e(z,s)|1_{\{|\psi^\e|>\beta/a(\e)\}}\|L^\e(s)\|_{\mathbb{W}}\nu(dz)ds\nonumber\\
&\leq&
C\int_0^t\|L^\e(s)\|_{\mathbb{W}}^2ds\nonumber\\
& &\qquad
+C\sup_{t\in[0,T]}\|L^\e(t)\|_{\mathbb{W}}\sup_{t\in[0,T]}\big(1+\|X^\e(t)\|_{\mathbb{W}}\big)
\int_0^T\int_{\mathbb{Z}}M_G(z)|\psi^\e(z,s)|1_{\{|\psi^\e|>\beta/a(\e)\}}\nu(dz)ds\nonumber\\
&\leq&
C\int_0^t\|L^\e(s)\|_{\mathbb{W}}^2ds+\frac{1}{2}\sup_{t\in[0,T]}\|L^\e(t)\|^2_{\mathbb{W}}\nonumber\\
& &\qquad
+C\sup_{t\in[0,T]}\big(1+\|X^\e(t)\|^2_{\mathbb{W}}\big)
\Big\{\int_0^T\int_{\mathbb{Z}}M_G(z)|\psi^\e(z,s)|1_{\{|\psi^\e|>\beta/a(\e)\}}\nu(dz)ds\Big\}^2.
\end{eqnarray}
Noticing that, by (\ref{MDP-2-estimate-1}) and Lemma \ref{H-lemma-3}, we have
\begin{eqnarray}\label{MDP-2-eq-2-4}
CE\Big[\sup_{t\in[0,T]}\big(1+\|X^\e(t)\|^2_{\mathbb{W}}\big)\Big]
\Big\{\sup_{\psi\in{{\mathcal{S}}^M_\e}}\int_0^T\int_{\mathbb{Z}}M_G(z)|\psi(s,z)|1_{\{|\psi|>\beta/a(\e)\}}\nu(dz)ds\Big\}^2\rightarrow0\text{ as }\e\rightarrow0.
\end{eqnarray}
Combining (\ref{MDP-2-eq-2-3}) and (\ref{MDP-2-eq-2-4}) and applying Gronwall's inequality, we obtian
\begin{eqnarray}\label{MDP-2-eq-2-estimate}
\lim_{\e\rightarrow0}E\Big[\sup_{t\in[0,T]}\|L^\e(t)\|_{\mathbb{W}}^2\Big]=0.
\end{eqnarray}

\indent \textbf{Step 3.} Denote $U^\e$ the solution of the following equation
\begin{eqnarray}\label{MDP-2-eq-3-1}
dU^\e(t)=-\kappa\widehat{A}U^\e(t)dt+\int_{\mathbb{Z}}\big(\widehat{G}(X^\e(t),z)-\widehat{G}(u^0(t),z)\big)\psi^\e(z,t)1_{\{|\psi^\e|\leq\beta/a(\e)\}}\nu(dz)dt.
\end{eqnarray}
Similar to {\bf step 2}, we have
\begin{eqnarray}\label{MDP-2-eq-3-3}
& &\|U^\e(t)\|_{\mathbb{W}}^2+\frac{2\kappa}{\alpha}\int_0^t\|U^\e(s)\|_{\mathbb{W}}^2ds\nonumber\\
&=&
\frac{2\kappa}{\alpha}\int_0^t\Big(curl\big(U^\e(s)\big),curl\big(U^\e(s)-\alpha\Delta U^\e(s)\big)\Big)ds\nonumber\\
& &\qquad
+2\int_0^t\int_{\mathbb{Z}}\Big(\big(\widehat{G}(X^\e(s),z)-\widehat{G}(u^0(s),z)\big)
\psi^\e(z,t)1_{\{|\psi^\e|\leq\beta/a(\e)\}},U^\e(s)\Big)_{\mathbb{W}}\nu(dz)ds\nonumber\\
&\leq&
C\int_0^t\|U^\e(s)\|_{\mathbb{W}}^2ds+2\int_0^T\int_{\mathbb{Z}}
\|\widehat{G}(X^\e(s),z)-\widehat{G}(u^0(s),z)\|_{\mathbb{W}}|\psi^\e(z,s)|1_{\{|\psi^\e|\leq\beta/a(\e)\}}\|U^\e(s)\|_{\mathbb{W}}\nu(dz)ds\nonumber\\
&\leq&
C\int_0^t\|U^\e(s)\|_{\mathbb{W}}^2ds\nonumber\\
& &\qquad
+C\sup_{t\in[0,T]}\|U^\e(t)\|_{\mathbb{W}}\sup_{t\in[0,T]}\|X^\e(t)-u^0(t)\|_{\mathbb{V}}
\int_0^T\int_{\mathbb{Z}}L_G(z)|\psi^\e(z,s)|\nu(dz)ds\nonumber\\
&\leq&
C\int_0^t\|U^\e(s)\|_{\mathbb{W}}^2ds+\frac{1}{2}\sup_{t\in[0,T]}\|U^\e(t)\|^2_{\mathbb{W}}\nonumber\\
& &\qquad
+C\sup_{t\in[0,T]}\|X^\e(t)-u^0(t)\|^2_{\mathbb{V}}
\sup_{\psi\in\mathcal{S}_{\e}^M}\Big\{\int_0^T\int_{\mathbb{Z}}L_G(z)|\psi(z,s)|\nu(dz)ds\Big\}^2
\end{eqnarray}
Noticing that, by (\ref{MDP-2-theorem-1-estimate}) and Lemma \ref{H-lemma-2}, we have
\begin{eqnarray}\label{MDP-2-eq-3-4}
CE\Big[\sup_{t\in[0,T]}\|X^\e(t)-u^0(t)\|^2_{\mathbb{V}}
\sup_{\psi\in\mathcal{S}_{\e}^M}\Big\{\int_0^T\int_{\mathbb{Z}}L_G(z)|\psi(z,s)|\nu(dz)ds\Big\}^2\Big]\rightarrow0\text{ as }\e\rightarrow0.
\end{eqnarray}
Combining (\ref{MDP-2-eq-3-3}) and (\ref{MDP-2-eq-3-4}) and applying Gronwall's inequality, we obtain
\begin{eqnarray}\label{MDP-2-eq-3-estimate}
\lim_{\e\rightarrow0}E\Big[\sup_{t\in[0,T]}\|U^\e(t)\|_{\mathbb{W}}^2\Big]=0.
\end{eqnarray}

\indent \textbf{Step 4.} Set $K^\e=Z^\e+L^\e+U^\e$ and denote $\Upsilon^\e=Y^\e-K^\e$. By (\ref{4.1}), we have
\begin{eqnarray}\label{MDP-2-eq-4-1}
d\Upsilon^\e(t)
&=&
-\widehat{A}\Upsilon^\e(t)dt-a(\e)\widehat{B}(\Upsilon^\e(t)+K^\e(t),\Upsilon^\e(t)+K^\e(t))dt\nonumber\\
& &-\widehat{B}(u^0(t),\Upsilon^\e(t)+K^\e(t))dt-\widehat{B}(\Upsilon^\e(t)+K^\e(t),u^0(t))dt\nonumber\\
& &+\frac{1}{a(\e)}\Big(\widehat{F}\big(u^0(t)+a(\e)\big(\Upsilon^\e(t)+K^\e(t)\big),t\big)-\widehat{F}(u^0(t),t)\Big)dt\nonumber\\
& &+\int_{\mathbb{Z}}\widehat{G}(u^0(t),z)\psi^\e(z,t)1_{\{|\psi^\e|\leq\beta/a(\e)\}}\nu(dz)dt.
\end{eqnarray}
\indent Set
$$\Pi=\Big(D([0,T],\mathbb{V})\cap L^2([0,T],\mathbb{W});C([0,T],\mathbb{V})\cap L^2([0,T],\mathbb{W});B_2\big(\sqrt{M\kappa_2(1)}\big)\Big).$$
\indent By (\ref{MDP-2-eq-1-estimate}), (\ref{MDP-2-eq-2-estimate}), (\ref{MDP-2-eq-3-estimate}), and notice that $(\psi^\e1_{\{|\psi^\e|\leq\beta/a(\e)\}})_{\e>0}$ is tight in $B_2\big(\sqrt{M\kappa_2(1)}\big)$ with the weak topology of $L^2(\nu_T)$ (see Lemma 3.2 in \cite{BDG}), $(Z^\e,L^\e+U^\e,\psi^\e1_{\{|\psi^\e|\leq\beta/a(\e)\}})_{\e>0}$ is tight in $\Pi$, and let $(0,0,\psi)$ be any limit point of the tight family, and denote by $Y=\mathcal{G}^0(\psi)$ the solution of equation (\ref{Theorem-1-eq-1}).\\
\indent It follows from the Skorokhod representation theorem that there exist a probability space $(\Omega^1,\mathcal{F}^1,\mathbb{P}^1)$ and on this space, $\Pi$-valued random variables $(\widetilde{Z}^\e,\widetilde{J}^\e,\widetilde{\psi}^\e),(0,0,\widetilde{\psi}),\e\in(0,\e_0)$, such that $(\widetilde{Z}^\e,\widetilde{J}^\e,\widetilde{\psi}^\e)$ \big(respectively $(0,0,\widetilde{\psi})$\big) has the same law as $(Z^\e,L^\e+U^\e,\psi^\e1_{\{|\psi^\e|\leq\beta/a(\e)\}})$ \big(respectively $(0,0,\psi)$\big), and $(\widetilde{Z}^\e,\widetilde{J}^\e,\widetilde{\psi}^\e)\rightarrow(0,0,\widetilde{\psi})$ in $\Pi$, $\mathbb{P}^1$-a.s..\\
\indent Set $\widetilde{K}^\e=\widetilde{Z}^\e+\widetilde{J}^\e$. Denote by $\widetilde{\Upsilon}^\e(t)$ the unique solution of (\ref{MDP-2-eq-4-1}) with $(K^\e,\psi^\e1_{\{|\psi^\e|\geq\beta/a(\e)\}})$ replaced by $(\widetilde{K}^\e,\widetilde{\psi}^\e)$. Then $(\widetilde{K}^\e,\widetilde{\Upsilon}^\e)$ has the same law as $({K}^\e,{\Upsilon}^\e)$. Hence, $\widetilde{Y}^\e=\widetilde{K}^\e+\widetilde{\Upsilon}^\e$ has the same law as ${Y}^\e={K}^\e+{\Upsilon}^\e$ in $D([0,T],\mathbb{V})\cap L^2([0,T],\mathbb{W})$. Denote by $\widetilde{Y}$ the solution of equation (\ref{Theorem-1-eq-1}) with $\psi(z,t)$ replaced by $\widetilde{\psi}(z,t)$. $\widetilde{Y}$ must have the same law as $Y$.\\
\indent Thus the proof of the proposition will be complete if we can show that
\begin{eqnarray}\label{MDP-2-eq}
\sup_{t\in[0,T]}\|\widetilde{Y}^\e(t)-\widetilde{Y}(t)\|_{\mathbb{V}}\rightarrow0,\ \ \mathbb{P}^1-a.s., \text{ as }\e\rightarrow0.
\end{eqnarray}
Consider the following equation
\begin{eqnarray}\label{MDP-2-eq-5-1}
d\widetilde{\Gamma}^\e(t)=-\widehat{A}\widetilde{\Gamma}^\e(t)dt+\int_{\mathbb{Z}}\widehat{G}(u^0(t),z)\widetilde{\psi}^\e(z,t)\nu(dz)dt,
\end{eqnarray}
and
\begin{eqnarray}\label{MDP-2-eq-5-2}
d\widetilde{\Gamma}(t)=-\widehat{A}\widetilde{\Gamma}(t)dt+\int_{\mathbb{Z}}\widehat{G}(u^0(t),z)\widetilde{\psi}(z,t)\nu(dz)dt.
\end{eqnarray}
As the proof of (\ref{MDP-1-estimate-3}), first we can show
\begin{eqnarray}\label{MDP-2-eq-5-estimate}
\lim_{\e\rightarrow0}\sup_{t\in[0,T]}\|\widetilde{\Gamma}^\e(t)-\widetilde{\Gamma}(t)\|_{\mathbb{V}}^2=0,\ \ \mathbb{P}^1-\text{a.s.}.
\end{eqnarray}

Set $\widetilde{M}=\widetilde{Y}-\widetilde{\Gamma}$ and $\widetilde{M}^\e=\widetilde{Y}^\e-\widetilde{K}^\e-\widetilde{\Gamma}^\e$. Then
\begin{eqnarray}\label{MDP-2-eq-6-1}
d\widetilde{M}(t)
&=&
-\kappa\widehat{A}\widetilde{M}(t)dt-\widehat{B}\big(\widetilde{M}(t)+\widetilde{\Gamma}(t),u^0(t)\big)dt
-\widehat{B}\big(u^0(t),\widetilde{M}(t)+\widetilde{\Gamma}(t)\big)dt\nonumber\\
& &
+\widehat{F}'(u^0(t),t)\big(\widetilde{M}(t)+\widetilde{\Gamma}(t)\big)dt.
\end{eqnarray}
and
\begin{eqnarray}\label{MDP-2-eq-6-2}
d\widetilde{M}^\e(t)
&=&
-\kappa\widehat{A}\widetilde{M}^\e(t)dt
-a(\e)\widehat{B}\big(\widetilde{M}^\e(t)+\widetilde{K}^\e(t)+\widetilde{\Gamma}^\e(t),
\widetilde{M}^\e(t)+\widetilde{K}^\e(t)+\widetilde{\Gamma}^\e(t)\big)dt\nonumber\\
& &
-\widehat{B}\big(\widetilde{M}^\e(t)+\widetilde{K}^\e(t)+\widetilde{\Gamma}^\e(t),u^0(t)\big)dt
-\widehat{B}\big(u^0(t),\widetilde{M}^\e(t)+\widetilde{K}^\e(t)+\widetilde{\Gamma}^\e(t)\big)dt\nonumber\\
& &
+\frac{1}{a(\e)}\Big(\widehat{F}\big(u^0(t)+a(\e)\big(\widetilde{M}^\e(t)+\widetilde{K}^\e(t)+\widetilde{\Gamma}^\e(t)\big),t\big)
-\widehat{F}\big(u^0(t),t\big)\Big)dt.
\end{eqnarray}
Since
\begin{eqnarray}\label{MDP-2-eq-6-3}
\lim_{\e\rightarrow0}\sup_{t\in[0,T]}\|\widetilde{K}^\e(t)\|_{\mathbb{W}}^2=0,\ \ \mathbb{P}^1-\text{a.s.},
\end{eqnarray}
taking into account (\ref{MDP-2-eq-5-estimate}), the proof of (\ref{MDP-2-eq}) reduces to show
\begin{eqnarray}\label{MDP-2-eq-7-estimate}
\lim_{\e\rightarrow0}\sup_{t\in[0,T]}\|\widetilde{M}^\e(t)-\widetilde{M}(t)\|_{\mathbb{V}}=0,\ \ \mathbb{P}^1-a.s.
\end{eqnarray}
By the similar arguments as in the proof of (\ref{MDP-1-estimate-1}) and using (\ref{MDP-2-eq-5-estimate}) and (\ref{MDP-2-eq-6-3}) again, we have
\begin{eqnarray}\label{MDP-2-eq-6-4}
\sup_{\e\in(0,\e_0)}\Big[\sup_{t\in[0,T]}\|\widetilde{M}^\e(t)\|_{\mathbb{W}}^2+\sup_{t\in[0,T]}\|\widetilde{M}(t)\|_{\mathbb{W}}^2\Big]\leq C(\omega^1)<\infty,\ \ \mathbb{P}^1-\text{a.s.}.
\end{eqnarray}

Set $\widetilde{N}^\e=\widetilde{M}^\e-\widetilde{M}$ and $\widetilde{H}^\e=\widetilde{\Gamma}^\e-\widetilde{\Gamma}$, we have
\begin{eqnarray}\label{MDP-2-eq-7-1}
d\widetilde{N}^\e(t)
&=&
-\kappa\widehat{A}\widetilde{N}^\e(t)dt
-a(\e)\widehat{B}\big(\widetilde{M}^\e(t)+\widetilde{K}^\e(t)+\widetilde{\Gamma}^\e(t),
\widetilde{M}^\e(t)+\widetilde{K}^\e(t)+\widetilde{\Gamma}^\e(t)\big)dt\nonumber\\
& &
-\widehat{B}\big(\widetilde{N}^\e(t)+\widetilde{K}^\e(t)+\widetilde{H}^\e(t),u^0(t)\big)dt
-\widehat{B}\big(u^0(t),\widetilde{N}^\e(t)+\widetilde{K}^\e(t)+\widetilde{H}^\e(t)\big)dt\nonumber\\
& &
+\frac{1}{a(\e)}\Big(\widehat{F}\big(u^0(t)+a(\e)\big(\widetilde{M}^\e(t)+\widetilde{K}^\e(t)+\widetilde{\Gamma}^\e(t)\big),t\big)
-\widehat{F}\big(u^0(t),t\big)\Big)dt\nonumber\\
& &
-\widehat{F}'(u^0(t),t)\big(\widetilde{M}(t)+\widetilde{\Gamma}(t)\big)dt.
\end{eqnarray}
\indent Applying the chain rule, we have
\begin{eqnarray}\label{MDP-2-eq-7-2}
& &\|\widetilde{N}^\e(t)\|_{\mathbb{V}}^2+2\kappa\int_0^t\|\widetilde{N}^\e(s)\|^2ds\nonumber\\
&=&
-2a(\e)\int_0^t\Big(\widehat{B}\big(\widetilde{M}^\e(s)+\widetilde{K}^\e(s)+\widetilde{\Gamma}^\e(s),
\widetilde{M}^\e(s)+\widetilde{K}^\e(s)+\widetilde{\Gamma}^\e(s)\big),\widetilde{N}^\e(s)\Big)_{{\mathbb{W}}^*,\mathbb{W}}ds\nonumber\\
& &
-2\int_0^t\Big(\widehat{B}\big(\widetilde{N}^\e(s)+\widetilde{K}^\e(s)+\widetilde{H}^\e(s),u^0(s)\big),
\widetilde{N}^\e(s)\Big)_{{\mathbb{W}}^*,\mathbb{W}}ds\nonumber\\
& &
-2\int_0^t\Big(\widehat{B}\big(u^0(s),\widetilde{N}^\e(s)+\widetilde{K}^\e(s)+\widetilde{H}^\e(s)\big),
\widetilde{N}^\e(s)\Big)_{{\mathbb{W}}^*,\mathbb{W}}ds\nonumber\\
& &
+\frac{2}{a(\e)}\int_0^t\Big(\widehat{F}\big(u^0(s)+a(\e)\big(\widetilde{M}^\e(s)+\widetilde{K}^\e(s)+\widetilde{\Gamma}^\e(s)\big),s\big)
-\widehat{F}\big(u^0(s),s\big),\widetilde{N}^\e(s)\Big)_{\mathbb{V}}ds\nonumber\\
& &
-2\int_0^t\Big(\widehat{F}'(u^0(s),s)\big(\widetilde{M}(s)+\widetilde{\Gamma}(s)\big),\widetilde{N}^\e(s)\Big)_{\mathbb{V}}ds\nonumber\\
&=&
I_1(t)+I_2(t)+I_3(t)+I_4(t).
\end{eqnarray}
Now we estimate $I_i, i=1,2,3,4$ respectively. By Lemma \ref{Lem-B-01}, Lemma \ref{Regularity} and (\ref{MDP-2-eq-6-4}), we have
\begin{eqnarray}\label{MDP-2-eq-7-3}
|I_1(t)|
&\leq&
2a(\e)\int_0^t\|\widetilde{M}^\e(s)+\widetilde{K}^\e(s)+\widetilde{\Gamma}^\e(s)\|_{\mathbb{V}}^2\|\widetilde{N}^\e(s)\|_{\mathbb{W}}ds\nonumber\\
&\leq&
4a(\e)C(\omega_1,T)\sup_{s\in[0,T]}
\Big(\|\widetilde{M}^\e(s)\|^2_{\mathbb{V}}+\|\widetilde{K}^\e(s)\|^2_{\mathbb{V}}+\|\widetilde{\Gamma}^\e(s)\|^2_{\mathbb{V}}\Big),\nonumber\\
|I_2(t)|
&=&
2\Big|\int_0^t\Big(\widehat{B}\big(\widetilde{N}^\e(s)+\widetilde{K}^\e(s)+\widetilde{H}^\e(s),\widetilde{N}^\e(s)\big),
u^0(s)\Big)_{{\mathbb{W}}^*,\mathbb{W}}ds\Big|\nonumber\\
&\leq&
2\Big|\int_0^t\Big(\widehat{B}\big(\widetilde{N}^\e(s)+\widetilde{K}^\e(s)+\widetilde{H}^\e(s),\widetilde{N}^\e(s)+\widetilde{K}^\e(s)+\widetilde{H}^\e(s)\big),
u^0(s)\Big)_{{\mathbb{W}}^*,\mathbb{W}}ds\Big|\nonumber\\
& &\qquad
+2\Big|\int_0^t\Big(\widehat{B}\big(\widetilde{K}^\e(s)+\widetilde{H}^\e(s),\widetilde{K}^\e(s)+\widetilde{H}^\e(s)\big),
u^0(s)\Big)_{{\mathbb{W}}^*,\mathbb{W}}ds\Big|\nonumber\\
& &\qquad
+2\Big|\int_0^t\Big(\widehat{B}\big(\widetilde{N}^\e(s),\widetilde{K}^\e(s)+\widetilde{H}^\e(s)\big),
u^0(s)\Big)_{{\mathbb{W}}^*,\mathbb{W}}ds\Big|\nonumber\\
&\leq&
C\int_0^t\|\widetilde{N}^\e(s)\|_{\mathbb{V}}^2ds
+C\sup_{s\in[0,T]}\big(\|\widetilde{K}^\e(s)\|^2_{\mathbb{V}}+\|\widetilde{H}^\e(s)\|^2_{\mathbb{V}}\big)
+C(\omega_1,T)\sup_{s\in[0,T]}\big(\|\widetilde{K}^\e(s)\|_{\mathbb{V}}+\|\widetilde{H}^\e(s)\|_{\mathbb{V}}\big),\nonumber\\
\text{and}\nonumber\\
|I_3(t)|
&=&
2\int_0^t\Big(\widehat{B}\big(u^0(s),\widetilde{K}^\e(s)+\widetilde{H}^\e(s)\big),
\widetilde{N}^\e(s)\Big)_{{\mathbb{W}}^*,\mathbb{W}}ds\nonumber\\
&\leq&
C(\omega_1,T)\sup_{s\in[0,T]}\big(\|\widetilde{K}^\e(s)\|_{\mathbb{V}}+\|\widetilde{H}^\e(s)\|_{\mathbb{V}}\big).\nonumber
\end{eqnarray}
Noticing the fact: there exists $\theta(s)\in[0,1]$ depending on $s,x,y$, such that
\begin{eqnarray*}
\widehat{F}(x+y,s)-\widehat{F}(x,s)=\widehat{F}'(x+\theta y,s)y,\text{ for any } x,y\in\mathbb{V}.
\end{eqnarray*}
Combining Conditions {\bf (F1)} and {\bf (F2)}, we have
\begin{eqnarray}\label{MDP-2-eq-7-6}
|I_4(t)|
&\leq&
2\int_0^t\Big|\Big(\frac{1}{a(\e)}\Big(\widehat{F}\big(u^0(s)+a(\e)\big(\widetilde{M}^\e(s)+\widetilde{K}^\e(s)+\widetilde{\Gamma}^\e(s)\big),s\big)
-\widehat{F}\big(u^0(s),s\big)\Big)-\nonumber\\
& &\qquad\qquad\widehat{F}'(u^0(s),s)\big(\widetilde{M}^\e(s)+\widetilde{K}^\e(s)+\widetilde{\Gamma}^\e(s)\big),
\widetilde{N}^\e(s)\Big)_{\mathbb{V}}\Big|ds\nonumber\\
& &
+2\int_0^t\Big|\Big(\widehat{F}'(u^0(s),s)\big(\widetilde{M}^\e(s)+\widetilde{K}^\e(s)+\widetilde{\Gamma}^\e(s)\big)-
\widehat{F}'(u^0(s),s)\big(\widetilde{M}(s)+\widetilde{\Gamma}(s)\big),\widetilde{N}^\e(s)\Big)_{\mathbb{V}}\Big|ds\nonumber\\
&\leq&
C(\omega_1,T)a(\e)\sup_{s\in[0,T]}\big(\|\widetilde{M}^\e(s)\|^2_{\mathbb{V}}+
\|\widetilde{K}^\e(s)\|^2_{\mathbb{V}}+\|\widetilde{\Gamma}^\e(s)\|^2_{\mathbb{V}}\big)\nonumber\\
& &+C\int_0^t\|\widetilde{N}^\e(s)\|^2_{\mathbb{V}}ds
+C(\omega_1,T)\sup_{s\in[0,T]}\big(\|\widetilde{K}^\e(s)\|^2_{\mathbb{V}}+\|\widetilde{H}^\e(s)\|^2_{\mathbb{V}}\big).\nonumber
\end{eqnarray}
Since $\lim_{\e\rightarrow0}a(\e)=0$ and
\begin{eqnarray*}
\lim_{\e\rightarrow0}\sup_{s\in[0,T]}\big(\|\widetilde{K}^\e(s)\|_{\mathbb{W}}+\|\widetilde{H}^\e(s)\|_{\mathbb{W}}\big)=0,\ \ \mathbb{P}^1-\text{a.s.},
\end{eqnarray*}
by Gronwall's inequality we obtain (\ref{MDP-2-eq-7-estimate}). The proof is completed.

\end{proof}

\def\refname{ References}

\end{document}